%% file: Main.tex
\documentclass{article}
\usepackage{preamble} % Loads preamble.sty from the same directory
\title{Equivariant Riemann--Roch for Surfaces via Connections and Residues}
\author{Alexandros Kafkas}
\date{}
\hypersetup{
  pdfsubject={Equivariant Euler characteristics of line bundles on smooth projective surfaces}
}

\begin{document}

\maketitle

\begin{abstract}
Let $X$ be a smooth projective surface over $\mathbb C$ with an effective
action of a finite group $G$, and let $L$ be a $G$-linearized line bundle.
We develop a residue-theoretic and birational approach to computing
the equivariant Euler characteristic $\chi_G(X,L)$. Every such $L$
admits an equivariant divisor presentation. Divisor peeling applies the
divisor exact sequences one layer at a time, treating whole $G$-orbits
of components together. It reduces the problem to the structure sheaf
and lower-dimensional equivariant contributions.

On a resolution of $X/G$, the natural extensions of the isotypic
direct-image sheaves preserve Euler characteristics, but their
induced connections need not be logarithmic, as an explicit $D_4$
calculation shows. Blowing up fixed points before taking the quotient
reduces the local calculation to diagonal actions and cyclic
Hirzebruch--Jung singularities. We obtain explicit correction class
functions for the ADE singularities and a uniform procedure for
general diagonal actions, including quasi-reflections. In the
diagonal case, the natural extensions are logarithmic, and their
residues are computed from monomial valuations on the
Hirzebruch--Jung resolution. Combining these residue calculations
with the branch-curve terms and divisor peeling gives a procedure for
computing the equivariant Euler characteristic of every $G$-linearized
line bundle. We compare the local classes with the holomorphic
Lefschetz formula and the Kleinian coefficients of Lim and Rota, and
give examples on $\mathbb P^2$, including one with fixed curves.
\end{abstract}

%%% --- Table of Contents --- 

\setcounter{tocdepth}{1}
\tableofcontents

%%% --- Introduction --- 

\input{ch-introduction}

%%% --- Preliminaries --- 

\input{ch-preliminaries}

%%% --- Reduction --- 

\input{ch-reduction}

%%% --- Obstruction --- 

\input{ch-obstruction}

%%% --- blowup --- 

\input{ch-blowup}

%%% --- general hj --- 

\input{ch-general-hj}

%%% ---applications --- 

\input{ch-applications}

%%% --- Acknowledgments ---
\section*{Acknowledgments}
This paper is based on the author's doctoral dissertation at Purdue
University. The author thanks his advisor, Donu Arapura, for his help and
guidance throughout this work, and Kenji Matsuki for the many helpful conversations.

%%% --- Bibliography ---
\begingroup
\setlength{\emergencystretch}{2em}
\printbibliography[heading=bibintoc]
\endgroup

\end{document}

%% file: ch-introduction.tex
\ProvidesFile{ch-introduction.tex}

% The schematic below requires:
% \usepackage{tikz}
% \usetikzlibrary{arrows.meta,positioning}

\section{Introduction}

Let $X$ be a smooth projective surface over $\mathbb C$ with an effective action of a finite group $G$, and let $\mathcal L$ be a $G$-linearized line bundle on $X$. The linearization induces representations of $G$ on the cohomology groups $H^i(X,\mathcal L)$. The central object in equivariant Riemann--Roch is the virtual representation
\[
    \chi_G(X,\mathcal L)
    :=\sum_{i=0}^2(-1)^i[H^i(X,\mathcal L)]
    \in \operatorname{Rep}(G),
\]
where $\operatorname{Rep}(G)$ is the representation ring of finite-dimensional complex representations of $G$.

Equivariant Riemann--Roch relates equivariant $K$-theory to equivariant Chern characters and Todd classes, see Edidin--Graham \cite{edidin_riemann-roch_1999}. Their theorem uses completion at the augmentation ideal, which does not by itself retain the full character for a finite group. The full character is given by the holomorphic Lefschetz formula \cite{atiyah_lefschetz_1968,atiyah_index_1968}, and in orbifold form by Kawasaki's Riemann--Roch theorem \cite{kawasaki_riemann-roch_1979}. See also \cite[Section~2]{lim_riemannroch_2023}. These formulas compute the character from fixed-point data on $X$. Our goal is different: we express the multiplicities through connections and residues on a resolution of the quotient. For the structure sheaf, each multiplicity is then given by residues and intersection numbers on a smooth surface, without summing a character over the group. Fixed curves enter through the self-intersections and canonical degrees of their images.

For smooth projective curves, Arapura \cite{arapura_residues_2022} gives a more explicit approach. A $G$-invariant logarithmic connection on $\mathcal L$ induces logarithmic connections on the isotypic summands of the direct image over the quotient curve. Their residues determine their degrees, and Riemann--Roch on the quotient then gives the multiplicities in $\chi_G(X,\mathcal L)$.

The goal of this paper is to develop a residue-theoretic and birational version of this method for smooth projective surfaces. We first choose an equivariant divisor presentation of $\mathcal L$ and reduce the computation to the structure sheaf and lower-dimensional terms. This divisor-peeling step uses the divisor exact sequences to add or remove one layer of the divisor at a time, treating all components in the same $G$-orbit together to preserve equivariance. Repeating these exact sequences until the divisor is removed leaves $\chi_G(X,\mathcal O_X)$ and equivariant contributions from the curves. We then study the structure-sheaf contribution in two ways. The first takes the quotient and passes to a resolution. The second blows up the surface before taking the quotient. The latter replaces a non-abelian local action by diagonal local actions and leads to cyclic Hirzebruch--Jung calculations.

Explicit correction terms have also been studied from the orbifold viewpoint: Buckley--Reid--Zhou \cite{buckley_ice_2013} give formulas for Hilbert series, and Lim--Rota \cite{lim_riemannroch_2023} compute the Riemann--Roch coefficients for ADE orbisurfaces. These papers use isotropy contributions on an orbifold or stack. We instead work with connections and residues on smooth surfaces obtained by blow-ups and quotient resolutions. The known ADE formulas therefore provide a comparison between the two approaches, while the general diagonal calculation gives the local terms needed beyond these cases.

Our contribution is a systematic procedure combining divisor peeling with the continued-fraction and residue calculations on Hirzebruch--Jung resolutions. The procedure applies beyond the ADE cases, including local actions with quasi-reflections. Together with the curve contributions from divisor peeling, it computes the equivariant Euler characteristic of every $G$-linearized line bundle. The combined statement is Theorem~\ref{thm:intro-global}.

For a point $p\in X$, we write
\[
    G_p:=\{g\in G:g\cdot p=p\}
\]
for its isotropy subgroup. In the local calculations, characters label the action on coordinate functions, as in Section~\ref{sec:general-hj}. These are the inverses of the corresponding tangent characters.

\subsection{The equivariant Riemann--Roch framework}

We organize the virtual character as a regular topological term together with correction class functions. More precisely, in
$\operatorname{Rep}(G)\otimes_{\mathbb Z}\mathbb Q$ we consider a decomposition of the form
\begin{equation}
\label{eq:master-err-intro}
\begin{aligned}
    \chi_G(X,\mathcal L)
    ={}&\left(
        \frac{1}{2|G|}
        c_1(\mathcal L)\cdot
        \bigl(c_1(\mathcal L)-K_X\bigr)
        +\chi(Y,\mathcal O_Y)
      \right)\chi_{\mathrm{reg}}\\
    &-\sum_{\xi\in\operatorname{Irr}(G)}
      m_\xi(\mathcal L)\,\xi,
\end{aligned}
\end{equation}
where $Y=X/G$ and $\chi_{\mathrm{reg}}$ is the character of the regular representation. For $\mathcal L=\mathcal O_X$, the rational coefficients $m_\xi(\mathcal O_X)$ collect the contributions from the ramified locus, including fixed curves and isolated fixed points. They consist of local contributions of the exceptional configurations, together with global terms from the self-intersections and canonical degrees of the branch curves. For general $\mathcal L$, the coefficients $m_\xi(\mathcal L)$ also absorb the divisor-peeling terms. Although the two parts on the right are written with rational coefficients, their sum is the integral virtual representation $\chi_G(X,\mathcal L)$.

If $G$ acts freely, then $X\to Y$ is finite \'etale, every $G$-linearized line bundle descends to $Y$, and the correction terms vanish. Evaluation at the identity then recovers the ordinary Hirzebruch--Riemann--Roch formula on $X$, see \cite[Chapter~V, Theorem~1.6]{hartshorne_algebraic_1977} for the surface formula.

\subsection{From the curve method to the surface methods}

The constructions in this paper follow the same basic idea as Arapura's argument, but surfaces require two additional steps.

\paragraph{The quotient-resolution method.}
For a surface, one can begin in the same way with
\[
    \pi\colon X\longrightarrow Y=X/G.
\]
The quotient may be singular, and the isotypic summands of $\pi_*\mathcal O_X$ are generally reflexive rather than locally free sheaves. On a resolution, their torsion-free pullbacks are locally free and have the same Euler characteristics as the original sheaves, by Proposition~\ref{extension_prop}. The next step would be to compute their Chern classes from residues, using Ohtsuki's residue formula \cite{ohtsuki_residue_1982}.

This works when the induced connections on the natural extensions are logarithmic. For an $A_n$ singularity, put $N=n+1$ and choose a generator $g$ of the cyclic isotropy subgroup so that the coordinate functions have characters $\chi,\chi^{-1}$, with $\chi(g)=\exp(2\pi i/N)$. The residue calculation gives
\[
    m_{p,\chi^j}=\frac{j(N-j)}{2N},
    \qquad 0\leq j<N,
\]
as proved in Proposition~\ref{prop:An-local-correction}.

The natural extension is not logarithmic in general. The induced flat connection is still regular singular, so some logarithmic extension exists by Deligne's extension theorem \cite[Chapter~II]{deligne_equations_1970}, but it may differ from the natural extension. One must then compare their Euler characteristics, including a term supported on the exceptional divisor. The $D_4$ calculation gives a concrete example of this obstruction.

\paragraph{The pre-quotient blow-up method.}
The comparison above becomes difficult for larger non-abelian isotropy subgroups. We therefore use a second method. We first blow up a fixed point $p\in X$ and only then take the quotient. The exceptional curve is
\[
    E=\mathbb P(T_{X,p})\cong\mathbb P^1.
\]
The isotropy subgroups of points on $E$ act diagonally in coordinates adapted to $E$. After quotienting by their quasi-reflection subgroups, the residual groups are cyclic. The original non-abelian local problem is therefore replaced by cyclic Hirzebruch--Jung calculations attached to the special isotropy orbits on $E$.

For a non-cyclic binary polyhedral group, there are three special isotropy orbits. Each gives a cyclic arm meeting the central curve. The resulting local corrections are transported back to $G_p$ by induction, with the central-curve term included only once. This gives the ADE assembly formula of Theorem~\ref{thm:blowup-assembly}. In these noncyclic \(D\)- and \(E\)-cases, the pre-quotient blow-up followed by resolution gives the same minimal resolution as the quotient-resolution method. However, it gives a different extension of the multiplicity sheaves, and this extension is logarithmic.

The same method applies when the diagonal isotropy subgroup contains quasi-reflections in one or both coordinate directions. The continued fraction determines the intersection matrix and the canonical intersections. The character congruences give a finite list of monomial generators, whose minimum valuations determine the residue vectors. Thus the diagonal calculation gives an explicit finite procedure without requiring a separate logarithmic modification. Section~\ref{sec:general-hj} specifies how the exceptional contributions and the remaining fixed-curve terms enter the global formula.

Figure~\ref{fig:method-overview} summarizes the three constructions. The dashed arrow in the middle row denotes a comparison of extensions, not a canonical identification.

\begin{figure}[ht]
\centering
\begin{tikzpicture}[
    font=\small,
    >=Latex,
    node distance=5mm and 4mm,
    methodlabel/.style={
        align=right,
        font=\small\bfseries,
        text width=20mm
    },
    methodbox/.style={
        draw,
        rounded corners=2pt,
        align=center,
        text width=22mm,
        minimum height=12mm,
        inner sep=2.5pt
    },
    arrow/.style={->,thick},
    compare/.style={->,thick,dashed}
]

% Curve method
\node[methodlabel] (curve-label) {Curve\\method};
\node[methodbox, below=of curve-label] (curve-quotient)
    {$Z=C/G$\\smooth};
\node[methodbox, right=of curve-quotient] (curve-isotypic)
    {isotypic\\vector bundles};
\node[methodbox, right=of curve-isotypic] (curve-residues)
    {logarithmic\\residues};
\node[methodbox, right=of curve-residues] (curve-output)
    {degrees and\\Riemann--Roch};

\draw[arrow] (curve-quotient) -- (curve-isotypic);
\draw[arrow] (curve-isotypic) -- (curve-residues);
\draw[arrow] (curve-residues) -- (curve-output);

% Quotient-resolution method
\node[methodlabel, below=22mm of curve-label] (resolution-label)
    {Quotient--\\resolution};
\node[methodbox, below=of resolution-label] (surface-quotient)
    {$Y=X/G$\\possibly singular};
\node[methodbox, right=of surface-quotient] (surface-reflexive)
    {reflexive\\isotypic sheaves};
\node[methodbox, right=of surface-reflexive] (surface-natural)
    {natural extension\\on $\widetilde Y$};
\node[methodbox, right=of surface-natural] (surface-log)
    {logarithmic\\extension};

\draw[arrow] (surface-quotient) -- (surface-reflexive);
\draw[arrow] (surface-reflexive) -- (surface-natural);
\draw[compare] (surface-natural) -- node[above=7mm,font=\scriptsize] {compare} (surface-log);

\node[methodbox, below=5mm of surface-log] (surface-output)
    {residues plus\\exceptional correction};
\draw[arrow] (surface-log) -- (surface-output);

% Pre-quotient method
\node[methodlabel, below=24mm of resolution-label] (blowup-label)
    {Pre-quotient\\blow-up};
\node[methodbox, below=of blowup-label] (blowup-x)
    {blow up\\$p\in X$};
\node[methodbox, right=of blowup-x] (blowup-e)
    {$E=\mathbb P(T_{X,p})$\\isotropy orbits};
\node[methodbox, right=of blowup-e] (blowup-cyclic)
    {cyclic local\\quotients};
\node[methodbox, right=of blowup-cyclic] (blowup-output)
    {assemble local\\corrections};

\draw[arrow] (blowup-x) -- (blowup-e);
\draw[arrow] (blowup-e) -- (blowup-cyclic);
\draw[arrow] (blowup-cyclic) -- (blowup-output);

\end{tikzpicture}
\caption{Arapura's curve method and the two surface constructions.}
\label{fig:method-overview}
\end{figure}

\subsection{The global formula}

The divisor-peeling argument applies after fixing an equivariant isomorphism $\mathcal L\simeq\mathcal O_X(D)$, where $D$ is a $G$-invariant Cartier divisor and $\mathcal O_X(D)$ carries its natural divisor linearization. Since the action is effective, it is generically free. Hilbert's theorem 90, in the form of Galois descent for the extension $\mathbb C(X)/\mathbb C(X)^G$, therefore gives a nonzero $G$-invariant rational section of every $G$-linearized line bundle \cite[Tag~0CDR]{authors_stacks_2018}. Its divisor gives the required equivariant presentation, so choosing $D$ imposes no additional restriction on $\mathcal L$. Section~\ref{sec:reduction} reduces the calculation to the structure sheaf, smooth curves, and the internal nodes of the orbit divisors.

The preceding constructions can be combined into one statement. We use the full logarithmic divisor on the resolved quotient, so fixed-curve contributions are included along with the exceptional terms.

\begin{theorem}[Global equivariant Euler characteristic]
\label{thm:intro-global}
Let $X$ be a smooth projective surface over $\mathbb C$ with an effective action of a finite group $G$, and let $\mathcal L$ be a $G$-linearized line bundle. Fix an equivariant divisor presentation
\[
    \mathcal L\simeq\mathcal O_X(D),
\]
with the natural divisor linearization. There is a sequence of blow-ups of finite $G$-orbits
\[
    f\colon X'\longrightarrow X
\]
such that $f^*D$ has SNC support, the isotropy subgroup at every point of $X'$ acts diagonally in local coordinates, and distinct fixed curves in the same $G$-orbit are disjoint. Write
\[
    f^*D=\sum_{j=1}^r m_jB_j,
    \qquad P_j=\sum_{\ell<j}m_\ell B_\ell,
\]
where the $B_j$ are the ordered reduced orbit divisors. Let
\[
    \pi'\colon X'\longrightarrow Y':=X'/G,
    \qquad \mu\colon S\longrightarrow Y'
\]
be the quotient map and its minimal resolution. Let $\Delta=\sum_a\Delta_a$ be the reduced divisor consisting of the exceptional curves of $\mu$ and the strict transforms of the divisorial branch locus.

For $\xi\in\operatorname{Irr}(G)$, choose a representation $W_\xi$ with character $\xi$ and set
\[
    \mathcal F_\xi
      :=\operatorname{Hom}_G(W_\xi,\pi'_*\mathcal O_{X'}),
    \qquad
    \mathcal V_\xi
      :=\mu^*\mathcal F_\xi/\operatorname{torsion}.
\]
Then $\Delta$ is SNC, and $\mathcal V_\xi$ is a rank-$\xi(1)$ vector bundle whose induced connection is logarithmic along $\Delta$. Write $R_{\xi,a}$ for its residue along $\Delta_a$, and put
\[
\begin{aligned}
    m_\xi(\mathcal O_X)
    :={}&-\frac12\sum_a\Bigl(
       \operatorname{tr}(R_{\xi,a}^2)\Delta_a^2
       +\operatorname{tr}(R_{\xi,a})K_S\cdot\Delta_a
       \Bigr)\\
    &-\sum_{a<b}\sum_{z\in\Delta_a\cap\Delta_b}
       \operatorname{tr}\bigl(R_{\xi,a}(z)R_{\xi,b}(z)\bigr).
\end{aligned}
\]
Then, with $Y=X/G$,
\begin{equation}
\label{eq:intro-global-formula}
\begin{aligned}
    \chi_G(X,\mathcal L)
    ={}&\chi(Y,\mathcal O_Y)\chi_{\mathrm{reg}}
       -\sum_{\xi\in\operatorname{Irr}(G)}
           m_\xi(\mathcal O_X)\,\xi\\
    &+\sum_{\substack{1\leq j\leq r\\m_j>0}}
       \sum_{k=1}^{m_j}
       \chi_G\!\left(B_j,\mathcal O_{B_j}(P_j+kB_j)\right)\\
    &-\sum_{\substack{1\leq j\leq r\\m_j<0}}
       \sum_{k=0}^{-m_j-1}
       \chi_G\!\left(B_j,\mathcal O_{B_j}(P_j-kB_j)\right).
\end{aligned}
\end{equation}
The residues are computed from the diagonal character modules by the finite monomial-valuation formula of Section~\ref{sec:general-hj}. The orbit-divisor terms are computed from their smooth components and internal nodes by \eqref{eq:surface-normalization}.
\end{theorem}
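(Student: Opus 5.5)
The proof combines three ingredients: the divisor-peeling reduction of Section~\ref{sec:reduction}, birational invariance of $\chi_G(\cdot,\mathcal O)$ and of the equivariant curve terms under the $G$-equivariant blow-up $f$, and the residue computation of $\chi(S,\mathcal V_\xi)$ via Ohtsuki's formula and Proposition~\ref{extension_prop}.

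\textbf{Step 1: constructing $f$.} First I produce $f$ by equivariant embedded resolution, blowing up whole $G$-orbits of points at each stage. Points of $\operatorname{Supp}D$ where the support is not SNC form a finite $G$-stable set, and iterated blow-ups of orbits make $f^*D$ SNC. Points with non-diagonalizable (non-abelian) isotropy are isolated fixed points. Blowing them up replaces the isotropy by the stabilizers of points on $E=\mathbb P(T_{X,p})$; as in Theorem~\ref{thm:blowup-assembly}, these act diagonally in coordinates adapted to $E$. Finitely many further orbit blow-ups at points where two fixed curves of the same orbit meet make those curves disjoint. Since $f_*\mathcal O_{X'}=\mathcal O_X$ and $R^if_*\mathcal O_{X'}=0$, we get
\[
\chi_G(X,\mathcal L)=\chi_G(X',f^*\mathcal L),
\]
with $f^*\mathcal L\simeq\mathcal O_{X'}(f^*D)$ carrying the natural divisor linearization. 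Likewise $X'/G\to X/G$ is birational between surfaces with rational singularities, so $\chi(Y,\mathcal O_Y)=\chi(S,\mathcal O_S)$.

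\textbf{Step 2: peeling the divisor.} Next I apply the peeling lemma of Section~\ref{sec:reduction} to $f^*D=\sum m_jB_j$ in the given order. Adding the layers $P_j+kB_j$ one at a time uses the sequences
\[
0\to\mathcal O(P_j+(k-1)B_j)\to\mathcal O(P_j+kB_j)\to\mathcal O_{B_j}(P_j+kB_j)\to0,
\]
and the negative-multiplicity case uses the analogous sequences subtracting layers. Summing these produces exactly the last two lines of \eqref{eq:intro-global-formula} and leaves $\chi_G(X',\mathcal O_{X'})$. The curve terms are then evaluated through \eqref{eq:surface-normalization}, using the smooth components and internal nodes of each $B_j$.

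\textbf{Step 3: the structure sheaf.} We have $\chi_G(X',\mathcal O_{X'})=\sum_\xi\chi(Y',\mathcal F_\xi)\,\xi$, and Proposition~\ref{extension_prop} gives $\chi(Y',\mathcal F_\xi)=\chi(S,\mathcal V_\xi)$. Because all isotropy on $X'$ is diagonal, Section~\ref{sec:general-hj} applies locally. It shows three things: $\mathcal V_\xi$ is locally free of rank $\xi(1)$; the branch divisor together with the Hirzebruch--Jung chains is SNC, since fixed curves in one orbit are disjoint so their images are smooth; and the connection induced from $d$ on $\mathcal O_{X'}$ is logarithmic along $\Delta$, with residues given by the monomial valuations. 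Away from $\Delta$ the flat bundle has trivial Chern classes. Ohtsuki's formula therefore expresses $\operatorname{ch}(\mathcal V_\xi)$ through the residues, with
\[
c_1=-\sum_a\operatorname{tr}(R_{\xi,a})\Delta_a,
\]
and the degree-two part built from $\operatorname{tr}(R_{\xi,a}^2)\Delta_a^2$ and the node terms $\operatorname{tr}(R_{\xi,a}R_{\xi,b})$. Hirzebruch--Riemann--Roch then gives
\[
\chi(S,\mathcal V_\xi)=\xi(1)\,\chi(\mathcal O_S)+\operatorname{ch}_2+\tfrac12 c_1\cdot(-K_S),
\]
which equals $\xi(1)\chi(\mathcal O_Y)-m_\xi(\mathcal O_X)$. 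Since $\chi_{\mathrm{reg}}=\sum_\xi\xi(1)\xi$, this is the first line of \eqref{eq:intro-global-formula}.

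\textbf{Main obstacle.} The hardest step is the sign and normalization bookkeeping in Step 3: checking that Ohtsuki's $\operatorname{ch}_2$ with the residue convention of Section~\ref{sec:general-hj} produces exactly the stated coefficients, including the node term. I would test this against the $A_n$ answer $j(N-j)/(2N)$ of Proposition~\ref{prop:An-local-correction}. A secondary point is making sure the fixed-curve components of $\Delta$ carry the correct residues, namely the character exponents divided by the order of the quasi-reflection group.
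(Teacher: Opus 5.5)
Your proposal is correct and follows the paper's proof essentially step for step. Both arguments use equivariant orbit blow-ups (log resolution, then the isolated non-abelian isotropy points, then the intersections of same-orbit fixed curves), equivariant birational invariance, ordered peeling via Proposition~\ref{prop:ordered-peeling}, and the local splitting of $\mathcal V_\xi$ into logarithmic character line bundles, whose residues are fed into \eqref{eq:prelim-residue-euler}.

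Two small points should be tightened. The diagonality of the new isotropy groups on $E$ is Lemma~\ref{lem:diag-formal}, which holds for any $G_p$, not Theorem~\ref{thm:blowup-assembly}. Also, you should say why points with non-abelian isotropy are isolated: the generic isotropy group along a fixed curve acts faithfully on the normal line, so it is cyclic.
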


The proof is given at the end of Section~\ref{sec:general-hj}. Thus the calculation uses global intersection numbers on $S$, finite local character and continued-fraction calculations, and the lower-dimensional contributions from divisor peeling. The examples in Section~\ref{sec:applications} carry out the structure-sheaf calculation and divisor peeling on $\mathbb P^2$.

The coefficients $m_\xi(\mathcal L)$ in
\eqref{eq:master-err-intro} are also determined by the global formula.
Comparing \eqref{eq:master-err-intro} with
\eqref{eq:intro-global-formula}, we obtain $m_\xi(\mathcal L)$ by
starting with $m_\xi(\mathcal O_X)$, adding
\[
    \frac{\xi(1)}{2|G|}
    c_1(\mathcal L)\cdot
    \bigl(c_1(\mathcal L)-K_X\bigr),
\]
and subtracting the multiplicity of $\xi$ in the total signed
divisor-peeling contribution. Here the two peeling sums are taken
with their signs in \eqref{eq:intro-global-formula}, and the factor
$\xi(1)$ is the multiplicity of $\xi$ in the regular representation.

\subsection{Organization of the paper}

Section~\ref{sec:preliminaries} goes over the representation-theoretic, connection-theoretic, and surface-theoretic facts used in the paper. It includes the natural-extension result of Proposition~\ref{extension_prop} and the residue theorem used on resolutions.

In Section~\ref{sec:reduction}, we prove the ordered divisor-peeling formula. We first establish equivariant birational invariance, then treat positive and negative coefficients, and finally analyze the internal intersections of an orbit divisor by equivariant SNC inclusion--exclusion.

In Section~\ref{sec:obstruction}, we apply the quotient-resolution method to cyclic $A_n$ singularities and then exhibit the failure of logarithmicity for the natural extension associated with the $D_4$ singularity.

In Section~\ref{sec:blowup}, we develop the pre-quotient blow-up method and compute the correction classes for the binary polyhedral isotropy subgroups. Section~\ref{sec:general-hj} treats the general diagonal calculation with quasi-reflections, shows that the local classes do not depend on the model, and ends with the proof of Theorem~\ref{thm:intro-global}.

Finally, in Section~\ref{sec:applications}, we compare the local classes with the holomorphic Lefschetz formula, recover the cyclic and $D_4$ correction coefficients of Lim--Rota, and work out a global example on $\mathbb P^2$, using the local calculation for $\frac15(1,2)$ and divisor reduction for $\mathcal O_{\mathbb P^2}(d)$ to the structure sheaf. We end with an $S_3$-action on $\mathbb P^2$ in which the pre-quotient blow-up and the branch-curve terms both enter.

%% file: ch-preliminaries.tex
\ProvidesFile{ch-preliminaries.tex}

\section{Preliminaries}
\label{sec:preliminaries}

This section fixes the notation and recalls the results used later. We include only the material needed for equivariant divisor peeling, quotient resolutions, logarithmic residue calculations, and pre-quotient blow-ups.

Throughout, varieties are defined over $\mathbb C$, and $G$ is a finite group. Unless stated otherwise, varieties are irreducible and the action of $G$ is effective. All representations are finite-dimensional complex representations. For a point $p$ of a $G$-variety, we write
\[
    G_p:=\{g\in G:g\cdot p=p\}
\]
for its isotropy subgroup.

\subsection{Representations and isotypic components}

We recall the representation theory we use following \cite{serre_linear_1977}. Let $\operatorname{Rep}(G)$ be the Grothendieck ring of finite-dimensional complex representations of $G$, and let $\operatorname{Irr}(G)$ denote the set of irreducible characters. For class functions $\alpha$ and $\beta$, we use the inner product
\[
    \langle \alpha,\beta\rangle_G
    :=\frac{1}{|G|}\sum_{g\in G}
      \alpha(g)\overline{\beta(g)}.
\]
If $V$ is a $G$-representation and $\xi\in\operatorname{Irr}(G)$, then
\[
    \langle\chi_V,\xi\rangle_G
\]
is the multiplicity of the irreducible representation with character $\xi$ in $V$.

Choose an irreducible representation $W_\xi$ with character $\xi$. The $\xi$-isotypic component of $V$ is the sum of all irreducible subrepresentations isomorphic to $W_\xi$. It has the canonical description
\[
    V_\xi
    \cong
    W_\xi\otimes\operatorname{Hom}_G(W_\xi,V).
\]
Here $\operatorname{Hom}_G(W_\xi,V)$ is the multiplicity space, whose dimension is $\langle\chi_V,\xi\rangle_G$. The component $V_\xi$ is the image of the central idempotent
\[
    e_\xi
    =\frac{\xi(1)}{|G|}
      \sum_{g\in G}\xi(g^{-1})g
    \in\mathbb C[G].
\]
Thus
\[
    V=\bigoplus_{\xi\in\operatorname{Irr}(G)}V_\xi
\]
canonically. A further decomposition of $V_\xi$ into individual irreducible summands is generally not canonical.

For a subgroup $H\leq G$, we write $\operatorname{Res}_H^G$ and $\operatorname{Ind}_H^G$ for restriction and induction. Frobenius reciprocity gives
\[
    \left\langle
      \operatorname{Ind}_H^G\alpha,\beta
    \right\rangle_G
    =
    \left\langle
      \alpha,\operatorname{Res}_H^G\beta
    \right\rangle_H.
\]
This identity is used repeatedly to transport local class functions from an isotropy subgroup to the ambient group.

\subsection{Equivariant bundles and finite quotients}

Let $X$ be a smooth projective variety with a left action of $G$. A $G$-linearization of a vector bundle $V$ consists of isomorphisms
\[
    \phi_g\colon g^*V\xrightarrow{\sim}V,
    \qquad g\in G,
\]
satisfying
\[
    \phi_{gh}=\phi_h\circ h^*\phi_g.
\]
Here $(gh)^*=h^*g^*$. Equivalently, the action of $G$ on $X$ lifts to the total space of $V$ and is linear on each fiber. A bundle with such a linearization is called $G$-linearized or $G$-equivariant.

The linearization induces $G$-actions on the cohomology groups $H^i(X,V)$. We define
\[
    \chi_G(X,V)
    :=\sum_i(-1)^i[H^i(X,V)]
    \in\operatorname{Rep}(G).
\]

Let
\[
    \pi\colon X\longrightarrow Y:=X/G
\]
be the quotient map. Since $G$ is finite, the quotient exists as a projective variety, $\pi$ is finite and surjective, and $Y$ is normal. The construction of the quotient variety is done by choosing an affine open cover and taking the invariant rings. See \cite{mumford_geometric_1994,dolgachev_lectures_2003} for more details on these facts.

If $\mathcal F$ is a $G$-equivariant coherent sheaf on $X$, then $\pi_*\mathcal F$ is a coherent sheaf on $Y$ with an $\mathcal O_Y$-linear action of $G$. The central idempotents give a canonical decomposition
\begin{equation}
\label{eq:prelim-isotypic-sheaves}
    \pi_*\mathcal F
    =\bigoplus_{\xi\in\operatorname{Irr}(G)}
      (\pi_*\mathcal F)_\xi,
    \qquad
    (\pi_*\mathcal F)_\xi
    :=e_\xi(\pi_*\mathcal F).
\end{equation}
Because $\pi$ is finite,
\[
    H^i(X,\mathcal F)
    \cong H^i(Y,\pi_*\mathcal F)
\]
as $G$-representations. Therefore
\begin{equation}
\label{eq:prelim-isotypic-euler}
    \chi\bigl(Y,(\pi_*\mathcal F)_\xi\bigr)
    =\xi(1)
      \left\langle
        \chi_G(X,\mathcal F),\xi
      \right\rangle_G.
\end{equation}

An effective action of a finite group is free over a dense open subset of $X$. Hence, if $L$ is a $G$-linearized line bundle, the generic fiber of $\pi_*L$ is the regular representation. In particular,
\[
    \operatorname{rk}(\pi_*L)_\xi=\xi(1)^2.
\]
If $\mathcal F$ is locally free and $Y$ is a smooth curve, the sheaves $(\pi_*\mathcal F)_\xi$ are vector bundles. When $X$ is a smooth surface, the isotypic summands of $\pi_*\mathcal F$ are reflexive sheaves. They need not be locally free at the quotient singularities.

For the structure sheaf, we also use the multiplicity sheaf
\[
    \mathcal F_\xi
      :=\operatorname{Hom}_G(W_\xi,\pi_*\mathcal O_X).
\]
The two descriptions are related by
\[
    (\pi_*\mathcal O_X)_\xi
      \cong W_\xi\otimes_{\mathbb C}\mathcal F_\xi,
    \qquad \operatorname{rk}\mathcal F_\xi=\xi(1).
\]
Here the rank is $\xi(1)$, while the full isotypic summand has rank $\xi(1)^2$. The Euler characteristic of $\mathcal F_\xi$ gives the multiplicity directly:
\[
    \chi(Y,\mathcal F_\xi)
      =\left\langle\chi_G(X,\mathcal O_X),\xi\right\rangle_G.
\]
This is the sheaf used in Theorem~\ref{thm:intro-global}.

\subsubsection{The natural linearization on a divisor line bundle}

If $D$ is a $G$-invariant Cartier divisor, then $\mathcal O_X(D)$ has a natural $G$-linearization induced by the action on rational functions. We call this the \emph{natural divisor linearization}. This term refers to the linearization of $\mathcal O_X(D)$. It does not refer to the canonical divisor $K_X$ or the canonical sheaf $\omega_X$.

The natural divisor linearization fixes the distinguished rational section $s_D$ whose divisor is $D$. An arbitrary linearization of the underlying line bundle need not be the natural divisor linearization. For example, if $G$ acts trivially on $X$, then the trivial line bundle may be linearized by any character of $G$.

\subsection{Connections, logarithmic poles, and residues}

We use the standard definitions of logarithmic connections and residues
from \cite{deligne_equations_1970}. Let $V$ be a vector bundle on a smooth variety $X$. A connection on $V$ is a $\mathbb C$-linear map
\[
    \nabla\colon V\longrightarrow V\otimes\Omega_X^1
\]
satisfying
\[
    \nabla(fs)=f\nabla(s)+s\otimes df.
\]
In a local frame, one may write
\[
    \nabla=d+A,
\]
where $A$ is a matrix of one-forms. Its curvature is
\[
    \Theta_\nabla=dA+A\wedge A.
\]
The connection is \emph{flat}, or \emph{integrable}, if $\Theta_\nabla=0$.

Suppose that $V$ is $G$-linearized by isomorphisms $\phi_g\colon g^*V\to V$. Let
\[
    \kappa_g\colon g^*\Omega_X^1\xrightarrow{\sim}\Omega_X^1
\]
be the isomorphism induced by the differential of $g$. The connection is $G$-equivariant if
\[
    \nabla\circ\phi_g
    =
    (\phi_g\otimes\kappa_g)\circ g^*\nabla
    \qquad(g\in G).
\]
The structure sheaf with its natural linearization carries the $G$-equivariant flat connection
\[
    d\colon\mathcal O_X\longrightarrow\Omega_X^1.
\]
This is the connection used after the reduction to $\mathcal O_X$.

\subsubsection{Logarithmic connections and residues}

Let
\[
    D=\sum_{i=1}^mD_i
\]
be a reduced simple normal crossings divisor. Locally, one can choose coordinates $z_1,\ldots,z_n$ such that
\[
    D=\{z_1\cdots z_k=0\}.
\]
Then $\Omega_X^1(\log D)$ is locally free with basis
\[
    \frac{dz_1}{z_1},\ldots,
    \frac{dz_k}{z_k},
    dz_{k+1},\ldots,dz_n.
\]

A logarithmic connection on $V$ with poles along $D$ is a connection
\[
    \nabla\colon V\longrightarrow
    V\otimes\Omega_X^1(\log D).
\]
Near the generic point of $D_i$, its matrix has the form
\[
    A=R_i\frac{dz_i}{z_i}+B,
\]
where $B$ is regular in the direction normal to $D_i$. The residue along $D_i$ is
\[
    \operatorname{Res}_{D_i}(\nabla)
    :=R_i|_{D_i}
    \in
    H^0\bigl(D_i,\operatorname{End}(V|_{D_i})\bigr).
\]
Under a change of frame, the residue is conjugated. Its characteristic polynomial and the traces of its powers are therefore intrinsic. If $\nabla$ is flat, then the residues along two components commute after restriction to their intersection.

\subsection{Residues and Chern classes on a surface}

We recall only the form of the residue theorem used later. Let $S$ be a smooth projective surface, let
\[
    D=\sum_iD_i
\]
be an SNC divisor, and let $V$ be a rank-$r$ vector bundle carrying a flat logarithmic connection with residue $R_i$ along $D_i$. Then, in complex cohomology,
\[
    c_1(V)
    =-\sum_i\operatorname{tr}(R_i)[D_i]
    \in H^2(S,\mathbb C),
\]
and
\[
    \operatorname{ch}_2(V)
    =\frac12\sum_i\operatorname{tr}(R_i^2)[D_i]^2
      +\sum_{i<j}\sum_{p\in D_i\cap D_j}
        \operatorname{tr}\bigl(R_i(p)R_j(p)\bigr)[p]
    \in H^4(S,\mathbb C).
\]
For $i\neq j$, the product $R_i(p)R_j(p)$ is evaluated at each point of $D_i\cap D_j$. Flatness implies that the restricted residues commute. For $i=j$, the term is $\operatorname{tr}(R_i^2)D_i^2$. These identities are the surface case of the residue formula of Ohtsuki and Esnault–Viehweg \cite{ohtsuki_residue_1982,esnault_logarithmic_1986}.

Hirzebruch--Riemann--Roch \cite{fulton_intersection_1984} gives
\[
    \chi(S,V)
    =r\chi(\mathcal O_S)
     -\frac12c_1(V)\cdot K_S
     +\operatorname{ch}_2(V).
\]
Substituting the residue formulas yields
\begin{equation}
\label{eq:prelim-residue-euler}
\begin{aligned}
    \chi(S,V)
    ={}&r\chi(\mathcal O_S)
      +\frac12\sum_i
        \operatorname{tr}(R_i)D_i\cdot K_S\\
     &+\frac12\sum_i\operatorname{tr}(R_i^2)D_i^2
       +\sum_{i<j}\sum_{p\in D_i\cap D_j}
          \operatorname{tr}\bigl(R_i(p)R_j(p)\bigr).
\end{aligned}
\end{equation}

On a smooth projective curve $C$, the corresponding residue theorem is
\[
    \deg(V)
    =-\sum_{p\in C}
      \operatorname{tr}\operatorname{Res}_p(\nabla).
\]
This is the formula used in Arapura's argument
\cite[Theorem~3.2]{arapura_residues_2022}.

\subsection{Regular singular direct-image connections}

Let $\pi\colon U\to V$ be a finite \'etale morphism of smooth varieties, and let $(L,\nabla)$ be a flat bundle on $U$. Then $\pi_*L$ carries the induced flat connection. In the applications below, $(L,\nabla)$ is the trivial connection $(\mathcal O_U,d)$, which is regular singular. Since $\pi$ is finite, and thus proper, its direct-image connection is again regular singular \cite{katz_regularity_1970}.

A flat algebraic connection is regular singular if, on one, and hence
every, smooth compactification with SNC boundary, it admits a locally
free logarithmic extension \cite{deligne_equations_1970}. Regular singularity guarantees the existence of a logarithmic extension. It does not identify that extension with a separately defined natural extension.

For a finite quotient $\pi\colon X\to Y=X/G$, let $B\subset Y$ be the reduced divisor branch locus and put
\[
    Y^\circ:=Y_{\mathrm{reg}}\setminus B.
\]
Over $Y^\circ$, the quotient is finite \'etale, and the isotypic direct-image bundles carry flat connections induced by
\[
    d\colon\mathcal O_X\longrightarrow\Omega_X^1.
\]
On a resolution, the relevant logarithmic boundary consists of the exceptional divisor together with the strict transform of $B$.

\subsection{Reflexive sheaves and the natural extension}

Let $Y$ be a normal projective surface with rational singularities, and let
\[
    \mu\colon\widetilde Y\longrightarrow Y
\]
be a resolution that is an isomorphism over $Y_{\mathrm{reg}}$. For a reflexive sheaf $\mathcal E$ on $Y$, its restriction to $Y_{\mathrm{reg}}$ is locally free and
\[
    \mathcal E
    \cong j_*\bigl(\mathcal E|_{Y_{\mathrm{reg}}}\bigr),
    \qquad
    j\colon Y_{\mathrm{reg}}\hookrightarrow Y.
\]
Thus $\mathcal E$ is determined by its restriction to the smooth locus,
see \cite[Tag~0EBJ]{authors_stacks_2018}.

The extension used below is the torsion-free pullback
\[
    \mathcal V^{\mathrm{nat}}
    :=\mu^*\mathcal E/\operatorname{torsion}.
\]
For rational surface singularities, this sheaf is locally free and preserves the Euler characteristic.

\begin{proposition}[Natural extension over a surface with rational singularities]
\label{extension_prop}
Let $Y$ be a normal projective surface over $\mathbb C$ with rational
singularities, let
\[
    \mu\colon\widetilde Y\longrightarrow Y
\]
be a resolution that is an isomorphism over $Y_{\mathrm{reg}}$, and let
$\mathcal E$ be a coherent reflexive sheaf of rank $r$ on $Y$. Set
\[
    \mathcal V^{\mathrm{nat}}
    :=\mu^*\mathcal E/\operatorname{torsion}.
\]
Then:
\begin{enumerate}
    \item $\mathcal V^{\mathrm{nat}}$ is locally free of rank $r$,
    \item $\mu_*\mathcal V^{\mathrm{nat}}\cong\mathcal E$, and
    \item $R^i\mu_*\mathcal V^{\mathrm{nat}}=0$ for every $i>0$.
\end{enumerate}
Consequently,
\[
    R\mu_*\mathcal V^{\mathrm{nat}}\simeq\mathcal E
\]
and
\begin{equation}
\label{eq:prelim-natural-euler}
    \chi(\widetilde Y,\mathcal V^{\mathrm{nat}})
    =\chi(Y,\mathcal E).
\end{equation}
\end{proposition}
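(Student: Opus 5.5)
The plan is to prove (2) and (3) first using only that $\mathcal E$ is locally a quotient of a free module and that $R^1\mu_*\mathcal O_{\widetilde Y}=0$. Local freeness in (1) then follows by comparing $\mathcal V^{\mathrm{nat}}$ with its reflexive hull. Since all statements are local over $Y$ and $\mu$ is an isomorphism over $Y_{\mathrm{reg}}$, we may work over a small affine neighbourhood $U$ of a singular point $y$ and its preimage $\widetilde U=\mu^{-1}(U)$. Write $j\colon Y_{\mathrm{reg}}\hookrightarrow Y$.

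\emph{Step 1: (2).} Adjunction gives $\mathcal E\to\mu_*\mu^*\mathcal E\to\mu_*\mathcal V^{\mathrm{nat}}$. The sheaf $\mu_*\mathcal V^{\mathrm{nat}}$ is torsion free, being the pushforward of a torsion-free sheaf under a birational map. It agrees with $\mathcal E$ on $Y_{\mathrm{reg}}$, so it embeds into $j_*(\mathcal E|_{Y_{\mathrm{reg}}})$. By reflexivity, recalled above via \cite[Tag~0EBJ]{authors_stacks_2018}, this is $\mathcal E$. The composite $\mathcal E\to\mu_*\mathcal V^{\mathrm{nat}}\hookrightarrow\mathcal E$ is the identity on $Y_{\mathrm{reg}}$, hence the identity. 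Therefore $\mu_*\mathcal V^{\mathrm{nat}}\cong\mathcal E$. The same argument applies verbatim to any torsion-free sheaf on $\widetilde U$ that agrees with $\mu^*\mathcal E$ off the exceptional locus and contains $\mathcal V^{\mathrm{nat}}$. In particular it applies to the double dual $\mathcal W:=(\mathcal V^{\mathrm{nat}})^{\vee\vee}$, so $\mu_*\mathcal W\cong\mathcal E$ as well.

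\emph{Step 2: (3).} Choose a surjection $\mathcal O_U^{n}\twoheadrightarrow\mathcal E$. Pulling back and dividing by torsion gives a surjection $\mathcal O_{\widetilde U}^{n}\twoheadrightarrow\mathcal V^{\mathrm{nat}}$ with some kernel $\mathcal K$. The fibres of $\mu$ have dimension at most $1$, so $R^2\mu_*\mathcal K=0$. The long exact sequence then makes $R^1\mu_*\mathcal V^{\mathrm{nat}}$ a quotient of $R^1\mu_*\mathcal O_{\widetilde U}^{n}$. That group vanishes because the singularities are rational. Hence $R^1\mu_*\mathcal V^{\mathrm{nat}}=0$, and $R^i\mu_*$ vanishes for $i\ge 2$ by the same dimension bound.

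\emph{Step 3: (1).} This is the step I expect to carry the real content. Since $\widetilde Y$ is a smooth surface, the reflexive sheaf $\mathcal W$ is locally free, and $\mathcal V^{\mathrm{nat}}\subset\mathcal W$ has cokernel $\mathcal Q$ of finite length supported on the exceptional locus. Applying $\mu_*$ to $0\to\mathcal V^{\mathrm{nat}}\to\mathcal W\to\mathcal Q\to0$ and using Steps 1 and 2 gives an exact sequence
\[
0\to\mathcal E\xrightarrow{\ \sim\ }\mathcal E\to\mu_*\mathcal Q\to R^1\mu_*\mathcal V^{\mathrm{nat}}=0.
\]
The map $\mathcal E\to\mathcal E$ here is an isomorphism because it is the identity on $Y_{\mathrm{reg}}$ and $\mathcal E$ is torsion free. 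Hence $\mu_*\mathcal Q=0$. Since $\mathcal Q$ is supported in finitely many points, it follows that $\mathcal Q=0$, so $\mathcal V^{\mathrm{nat}}=\mathcal W$ is locally free of rank $r$. The delicate point is the isomorphism $\mu_*\mathcal W\cong\mathcal E$. It needs the torsion-freeness of $\mu_*\mathcal W$ and the inclusion into $j_*(\mathcal E|_{Y_{\mathrm{reg}}})$, and I will check these carefully.

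Finally, (2) and (3) together give $R\mu_*\mathcal V^{\mathrm{nat}}\simeq\mathcal E$. The Leray spectral sequence then yields $\chi(\widetilde Y,\mathcal V^{\mathrm{nat}})=\chi(Y,\mathcal E)$, which is \eqref{eq:prelim-natural-euler}. As a fallback for (1), one could instead cite Esnault's results on full sheaves on rational surface singularities.
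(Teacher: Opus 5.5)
Your proof is correct. Steps 1 and 2 are the paper's arguments for (2) and (3): adjunction plus reflexivity for the pushforward, and the surjection $\mathcal O^{n}\to\mathcal V^{\mathrm{nat}}$ together with $R^1\mu_*\mathcal O=0$ and $R^2\mu_*\mathcal K=0$ for the vanishing.

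The genuine difference is in (1). The paper does not prove local freeness; it cites Artin--Verdier \cite[Lemma~1.1]{artin_reflexive_1985}, with Esnault, Wunram and Gustavsen--Ile as further references. You instead derive local freeness from (2) and (3) by comparing $\mathcal V^{\mathrm{nat}}$ with its reflexive hull $\mathcal W$. Since $\mu_*\mathcal V^{\mathrm{nat}}\to\mu_*\mathcal W$ is an isomorphism between two copies of $\mathcal E$, and $R^1\mu_*\mathcal V^{\mathrm{nat}}=0$, the finite-length cokernel $\mathcal Q$ satisfies $\mu_*\mathcal Q=0$, and therefore $\mathcal Q=0$.

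The point you flagged as delicate does hold. $\mu_*\mathcal W$ is torsion free because $\widetilde U$ is integral and $\mathcal W$ is torsion free. The kernel of $\mu_*\mathcal W\to j_*j^*\mu_*\mathcal W$ is supported at finitely many singular points, so it is torsion and hence vanishes.

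Neither of the paper's arguments for (2) or (3) uses (1), so your reordering introduces no circularity. Your route gives a self-contained proof and makes the logic explicit: rationality gives the $R^1$-vanishing, and that vanishing together with reflexivity forces local freeness. This is essentially the standard argument behind the cited lemma. The paper's route is shorter but leaves the one substantive assertion to the references.
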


\begin{proof}[Outline of proof]
The assertions are local on $Y$ and immediate over $Y_{\mathrm{reg}}$,
so we may work over an affine neighborhood of a singular point.
Local freeness holds for every rational surface singularity by
Artin--Verdier \cite[Lemma~1.1]{artin_reflexive_1985}. For quotient
singularities, see also Esnault \cite{esnault_reflexive_1985} and Wunram
\cite{wunram_reflexive_1988}, and for partial resolutions, Gustavsen--Ile
\cite[Proposition~2.7]{gustavsen_deformations_2018}. The rank is $r$ because $\mu$ is an
isomorphism over $Y_{\mathrm{reg}}$.

Adjunction gives a map $\mathcal E\to\mu_*\mu^*\mathcal E\to
\mu_*\mathcal V^{\mathrm{nat}}$. Since $\mu_*\mathcal V^{\mathrm{nat}}$ is
torsion-free, it embeds in $j_*$ of its restriction to $Y_{\mathrm{reg}}$,
which is $j_*(\mathcal E|_{Y_{\mathrm{reg}}})\cong\mathcal E$ because
$\mathcal E$ is reflexive. The composite is the identity, so
\[
    \mu_*\mathcal V^{\mathrm{nat}}\cong\mathcal E.
\]

For the vanishing, we use rationality and generation by global
sections, as in \cite{lipman_rational_1969}.
Pulling back a finite set of generators of $\mathcal E$ gives
an exact sequence
\[
    0\longrightarrow\mathcal K
    \longrightarrow\mathcal O_{\widetilde Y}^{\oplus N}
    \longrightarrow\mathcal V^{\mathrm{nat}}
    \longrightarrow0.
\]
Rationality gives $R^1\mu_*\mathcal O_{\widetilde Y}=0$, while
$R^2\mu_*\mathcal K=0$ because the fibers have dimension at most one.
The associated long exact sequence therefore gives
$R^1\mu_*\mathcal V^{\mathrm{nat}}=0$.
The higher direct images vanish by the same fiber-dimension bound.

The derived pushforward identity follows, and Leray gives
\eqref{eq:prelim-natural-euler}.
\end{proof}

\subsection{Natural and logarithmic extensions}

Let $\pi\colon X\to Y=X/G$ be a finite quotient, where $X$ is a smooth surface. Let $B\subset Y$ be its reduced branch locus, and put
\[
    Y^\circ:=Y_{\mathrm{reg}}\setminus B.
\]
Let $\mu\colon\widetilde Y\to Y$ be the minimal resolution. Assume that
\[
    \Delta
    :=\operatorname{Exc}(\mu)+\widetilde B
\]
is an SNC divisor, where $\widetilde B$ is the strict transform of $B$.

Suppose that $\mathcal E$ is an isotypic summand of $\pi_*\mathcal O_X$. Over $Y^\circ$, it is a vector bundle carrying the regular singular flat connection induced by
\[
    d\colon\mathcal O_X\longrightarrow\Omega_X^1.
\]
Its natural extension is
\[
    \mathcal V^{\mathrm{nat}}
    =\mu^*\mathcal E/\operatorname{torsion}.
\]
By Proposition~\ref{extension_prop},
\[
    \chi(Y,\mathcal E)
    =\chi(\widetilde Y,\mathcal V^{\mathrm{nat}}).
\]
The remaining issue is the pole order of the induced connection: it need not be logarithmic on $\mathcal V^{\mathrm{nat}}$ along the exceptional divisor.

Regular singularity gives a locally free logarithmic extension
\[
    (\mathcal V^{\log},\nabla^{\log})
\]
on $\widetilde Y$, with poles along $\Delta$. In the applications below, $\mathcal V^{\log}$ is chosen to agree with the natural direct-image lattice away from the exceptional divisor
\[
    E:=\operatorname{Exc}(\mu),
\]
including along the strict transform of the divisorial branch locus. Therefore the difference
\[
    [\mathcal V^{\log}]
    -[\mathcal V^{\mathrm{nat}}]
\]
in $K_0(\widetilde Y)$ is represented by a class supported on $E$. Hence
\begin{equation}
\label{eq:prelim-extension-comparison}
    \chi(Y,\mathcal E)
    =\chi(\widetilde Y,\mathcal V^{\log})
     -\chi\bigl(
       [\mathcal V^{\log}]
       -[\mathcal V^{\mathrm{nat}}]
      \bigr).
\end{equation}
If there is an exact sequence
\[
    0\longrightarrow
    \mathcal V^{\mathrm{nat}}
    \longrightarrow
    \mathcal V^{\log}
    \longrightarrow
    \mathcal Q
    \longrightarrow0,
\]
where $\mathcal Q$ is supported on $E$, then
\[
    \chi(Y,\mathcal E)
    =\chi(\widetilde Y,\mathcal V^{\log})
     -\chi(\widetilde Y,\mathcal Q).
\]
This is the exceptional comparison term used when the natural extension is not logarithmic.

%% file: ch-reduction.tex
\ProvidesFile{ch-reduction.tex}

\section{Reduction to the Structure Sheaf by Equivariant Divisor Peeling}
\label{sec:reduction}

We now prove the divisor-reduction statement announced in the introduction. The main applications concern surfaces, but the peeling argument works in every dimension.

Let $X$ be a smooth projective variety over $\mathbb C$ with an action of a finite group $G$, not necessarily effective. We assume that the $G$-linearized line bundle $L$ has been given an equivariant divisor presentation
\[
    L\simeq \mathcal O_X(D),
\]
where $D$ is a $G$-invariant Cartier divisor and $\mathcal O_X(D)$ carries its natural divisor linearization.

Such a presentation is the same as a nonzero $G$-invariant rational section of $L$. By Hilbert's theorem 90, one exists when the action is generically free, and in particular for every effective finite action on an irreducible variety. It may fail to exist for non-effective actions. For example, if $G$ acts trivially on $X$ and the trivial line bundle is linearized by a nontrivial character, there is no nonzero invariant rational section.

\subsection{Equivariant log resolution and birational invariance}

Choose a $G$-equivariant log resolution
\[
    f\colon \widetilde X\longrightarrow X
\]
of the support of $D$. Thus $\widetilde X$ is smooth and $\operatorname{Supp}(f^*D)$ is a simple normal crossings divisor. For a surface, $f$ may be chosen as a sequence of blow-ups of finite $G$-orbits of points. In higher dimensions, smooth $G$-invariant centers may also be needed.

\begin{lemma}[Equivariant birational invariance]
\label{lem:birational-invariance}
Let $f\colon \widetilde X\to X$ be a $G$-equivariant proper
birational morphism between smooth projective varieties, and let $L$
be a $G$-linearized line bundle on $X$. Then
\[
H^i(X,L)\cong H^i(\widetilde X,f^*L)
\]
as $G$-representations for every $i$. Consequently,
\[
\chi_G(X,L)=\chi_G(\widetilde X,f^*L).
\]
\end{lemma}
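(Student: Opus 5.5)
The plan is to reduce everything to the standard fact that a proper birational morphism between smooth varieties satisfies
\[
    Rf_*\mathcal O_{\widetilde X}\simeq\mathcal O_X,
\]
and then to keep track of the $G$-actions. The statement is the equivariant form of the projection formula combined with the Leray spectral sequence.

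\textbf{Step 1: the non-equivariant input.} First, $f_*\mathcal O_{\widetilde X}=\mathcal O_X$. Indeed, $f$ is proper and birational and $X$ is normal, so Zariski's main theorem applies. Second, $R^if_*\mathcal O_{\widetilde X}=0$ for $i>0$. This holds because smooth varieties have rational singularities; over $\mathbb C$ it follows from Hironaka together with Grauert--Riemenschneider vanishing, or from Hartshorne for blow-ups. In the surface case of interest, $f$ is a composition of point blow-ups. There one can check the vanishing directly from $R^1$ of $\mathcal O$ along a $\mathbb P^1$ fiber, so nothing deep is needed.

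\textbf{Step 2: projection formula.} Since $L$ is locally free,
\[
    R^if_*(f^*L)\cong L\otimes R^if_*\mathcal O_{\widetilde X}.
\]
Hence $Rf_*f^*L\simeq L$. The Leray spectral sequence
\[
    E_2^{p,q}=H^p(X,R^qf_*f^*L)\Rightarrow H^{p+q}(\widetilde X,f^*L)
\]
then degenerates. The edge map, which is the pullback
\[
    f^*\colon H^i(X,L)\to H^i(\widetilde X,f^*L),
\]
is therefore an isomorphism for every $i$.

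\textbf{Step 3: equivariance.} This is the step that needs care. The pullback bundle $f^*L$ carries the linearization $f^*\phi_g$; this uses $f\circ g=g\circ f$, so that $g^*f^*L=f^*g^*L$. The pullback on cohomology is the composite of the adjunction unit $L\to f_*f^*L$ with the map $H^i(X,f_*f^*L)\to H^i(\widetilde X,f^*L)$. Both maps are natural with respect to the isomorphisms $g^*$ and $\phi_g$, so the composite commutes with the $G$-actions on the two sides. Concretely, in Čech terms one takes a $G$-stable affine cover of $X$ together with its preimage cover of $\widetilde X$; after refining by Leray acyclicity of $f^*L$ on preimages, the map is visibly compatible with the translates. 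A $G$-equivariant linear isomorphism is an isomorphism of representations, so $[H^i(X,L)]=[H^i(\widetilde X,f^*L)]$ in $\operatorname{Rep}(G)$. Summing with signs gives the equality of $\chi_G$.

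The only real obstacle is citing or proving the vanishing $R^if_*\mathcal O_{\widetilde X}=0$ in arbitrary dimension. I would handle it by citing it as the rationality of smooth points, via Hironaka's weak factorization or Grauert--Riemenschneider vanishing. I would also note that in the applications only blow-ups of $G$-orbits of points on surfaces occur. There the vanishing follows from
\[
    H^1\bigl(\mathbb P^1,\mathcal O(k)\bigr)=0\quad\text{for }k\ge 0
\]
and the theorem on formal functions.
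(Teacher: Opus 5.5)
Your proposal is correct and follows essentially the same route as the paper. Both use $Rf_*\mathcal O_{\widetilde X}\simeq\mathcal O_X$, the projection formula, and degeneration of the Leray spectral sequence; your Step~3 spells out the equivariance that the paper asserts in one line. One small citation slip: weak factorization is due to Abramovich--Karu--Matsuki--W\l odarczyk rather than Hironaka, though the Grauert--Riemenschneider argument with duality that you also mention already gives the vanishing.
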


\begin{proof}
Since $X$ is smooth,
\[
f_*\mathcal O_{\widetilde X}\cong\mathcal O_X,
\qquad
R^qf_*\mathcal O_{\widetilde X}=0
\quad (q>0).
\]
These isomorphisms are $G$-equivariant because $f$ is
$G$-equivariant. The projection formula therefore gives
\[
f_*(f^*L)\cong L,
\qquad
R^qf_*(f^*L)=0
\quad (q>0),
\]
equivariantly. The $G$-equivariant Leray spectral sequence degenerates,
giving the required cohomology isomorphisms. Taking alternating sums
proves the Euler characteristic identity.
\end{proof}

We may therefore replace $(X,D)$ by $(\widetilde X,f^*D)$. From now on, we assume that $D$ has simple normal crossings support.

\subsection{Ordered orbit-by-orbit peeling}

Write
\[
    D=\sum_{j=1}^r m_jB_j,
\]
where $m_j\in\mathbb Z\setminus\{0\}$ and the $B_j$ are the reduced divisors determined by the distinct $G$-orbits of irreducible components of $\operatorname{Supp}(D)$. Choose a component $C_j\subset B_j$ and let
\[
    H_j:=\{g\in G:gC_j=C_j\}
\]
be its setwise isotropy subgroup. Then
\[
    B_j=\sum_{g\in G/H_j}gC_j.
\]

Fix an ordering $B_1,\ldots,B_r$, and put
\[
    P_j:=\sum_{\ell<j}m_\ell B_\ell,
    \qquad P_1=0.
\]
Thus $P_j$ is the part of $D$ already processed before the $j$-th step. The individual terms below depend on the ordering, but their sum does not.

Suppose first that $m_j>0$. For $1\leq k\leq m_j$, the exact sequence
\[
    0\longrightarrow
    \mathcal O_X(P_j+(k-1)B_j)
    \longrightarrow
    \mathcal O_X(P_j+kB_j)
    \longrightarrow
    \mathcal O_{B_j}(P_j+kB_j)
    \longrightarrow0
\]
gives
\begin{equation}
\label{eq:positive-peeling}
\begin{aligned}
    &\chi_G\bigl(X,\mathcal O_X(P_j+m_jB_j)\bigr)
     -\chi_G\bigl(X,\mathcal O_X(P_j)\bigr)\\
    &\hspace{25mm}=
      \sum_{k=1}^{m_j}
      \chi_G\bigl(
        B_j,\mathcal O_{B_j}(P_j+kB_j)
      \bigr).
\end{aligned}
\end{equation}

Now suppose that $m_j<0$, and write $n_j:=-m_j>0$. For $0\leq k<n_j$, the exact sequence
\[
    0\longrightarrow
    \mathcal O_X(P_j-(k+1)B_j)
    \longrightarrow
    \mathcal O_X(P_j-kB_j)
    \longrightarrow
    \mathcal O_{B_j}(P_j-kB_j)
    \longrightarrow0
\]
gives
\begin{equation}
\label{eq:negative-peeling}
\begin{aligned}
    &\chi_G\bigl(X,\mathcal O_X(P_j-n_jB_j)\bigr)
     -\chi_G\bigl(X,\mathcal O_X(P_j)\bigr)\\
    &\hspace{25mm}=
      -\sum_{k=0}^{n_j-1}
      \chi_G\bigl(
        B_j,\mathcal O_{B_j}(P_j-kB_j)
      \bigr).
\end{aligned}
\end{equation}

The different index ranges are forced by the exact sequences: a positive layer ends at $P_j+kB_j$, while a negative layer begins at $P_j-kB_j$ and passes to $P_j-(k+1)B_j$.

Telescoping over the ordered list gives the basic reduction formula.

\begin{proposition}[Ordered equivariant divisor peeling]
\label{prop:ordered-peeling}
With the notation above,
\begin{align*}
    \chi_G(X,\mathcal O_X(D))
    ={}&\chi_G(X,\mathcal O_X)\\
    &+\sum_{\substack{1\leq j\leq r\\m_j>0}}
      \sum_{k=1}^{m_j}
      \chi_G\!\left(
        B_j,\mathcal O_{B_j}(P_j+kB_j)
      \right)\\
    &-\sum_{\substack{1\leq j\leq r\\m_j<0}}
      \sum_{k=0}^{-m_j-1}
      \chi_G\!\left(
        B_j,\mathcal O_{B_j}(P_j-kB_j)
      \right).
\end{align*}
\end{proposition}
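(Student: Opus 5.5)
The proof is a telescoping argument over the ordered list $B_1,\ldots,B_r$, using the layer identities \eqref{eq:positive-peeling} and \eqref{eq:negative-peeling}. Since $P_1=0$ and $P_{r+1}:=\sum_{\ell\le r}m_\ell B_\ell=D$, I would write
\[
    \chi_G(X,\mathcal O_X(D))-\chi_G(X,\mathcal O_X)
    =\sum_{j=1}^r\Bigl(\chi_G\bigl(X,\mathcal O_X(P_{j+1})\bigr)-\chi_G\bigl(X,\mathcal O_X(P_j)\bigr)\Bigr),
\]
and note that $P_{j+1}=P_j+m_jB_j$. Each summand is then exactly the left side of \eqref{eq:positive-peeling} when $m_j>0$, or of \eqref{eq:negative-peeling} when $m_j<0$ with $n_j=-m_j$. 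Substituting those identities gives the stated formula.

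The real content is the justification of the two layer identities, so I would verify these carefully. For each $k$, the sheaf $\mathcal O_X(P_j+(k-1)B_j)$ sits inside $\mathcal O_X(P_j+kB_j)$ as the subsheaf of sections vanishing along $B_j$. Tensoring $0\to\mathcal O_X(-B_j)\to\mathcal O_X\to\mathcal O_{B_j}\to0$ with the line bundle $\mathcal O_X(P_j+kB_j)$ preserves exactness, which gives the displayed short exact sequence. The cokernel is $\mathcal O_{B_j}\otimes\mathcal O_X(P_j+kB_j)$, denoted $\mathcal O_{B_j}(P_j+kB_j)$.

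Equivariance then needs two checks. First, $B_j$ is a $G$-invariant reduced divisor, being a full orbit of components, so $\mathcal O_X(-B_j)\subset\mathcal O_X$ is a $G$-stable ideal sheaf. Second, all the divisors involved are $G$-invariant, and the inclusions $\mathcal O_X(A)\subset\mathcal O_X(A+B_j)$ are compatible with the natural divisor linearizations, since both are subsheaves of the constant sheaf of rational functions with the induced $G$-action. Hence the sequences are sequences of $G$-equivariant coherent sheaves. The long exact cohomology sequence is then a sequence of $G$-representations, and additivity of $[\,\cdot\,]$ in $\operatorname{Rep}(G)$ gives
\[
    \chi_G(\mathcal O_X(P_j+kB_j))=\chi_G(\mathcal O_X(P_j+(k-1)B_j))+\chi_G(\mathcal O_{B_j}(P_j+kB_j)).
\]
Summing over $k=1,\ldots,m_j$ telescopes to \eqref{eq:positive-peeling}. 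The negative case is the same argument with the index shift noted in the text, which is the source of the range $0\le k\le n_j-1$ and the overall minus sign.

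I expect no serious obstacle. The only delicate point is the equivariance bookkeeping: one must confirm that the natural divisor linearizations restrict compatibly, so that the connecting maps are $G$-equivariant. A finer point is that $\chi_G(B_j,\cdot)$ means the $G$-action on the cohomology of the possibly reducible curve or divisor $B_j$, with $G$ permuting its components. This is well defined because $B_j$ is $G$-stable. The simple normal crossings hypothesis is not needed here; it matters only later, when these terms are evaluated.
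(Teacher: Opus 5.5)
Your proposal is correct and follows the paper's proof. The paper derives \eqref{eq:positive-peeling} and \eqref{eq:negative-peeling} from the same divisor exact sequences, one layer at a time, and then telescopes over the ordered list $B_1,\ldots,B_r$ using $P_{j+1}=P_j+m_jB_j$. Your additional checks make explicit what the paper leaves implicit: that the inclusions are compatible with the natural divisor linearizations inside the sheaf of rational functions, that $\chi_G$ is additive on short exact sequences, and that the SNC hypothesis is not used at this stage.
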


\begin{remark}
The formula handles positive and negative coefficients at the same time, so no preliminary decomposition $D=D^+-D^-$ is needed. Different orderings change the intermediate restrictions, but not the final class in $\operatorname{Rep}(G)$.
\end{remark}

\subsection{Internal intersections of an orbit divisor}

Proposition~\ref{prop:ordered-peeling} is complete if the Euler characteristics on the reduced orbit divisors $B_j$ are left unchanged. To express them in terms of smooth components, however, one must include the intersections among components of the same orbit divisor.

Fix an orbit divisor
\[
    B=\bigcup_{a\in A}C_a
\]
and a $G$-linearized line bundle $F$ on $X$. For a nonempty subset $I\subseteq A$, put
\[
    C_I:=\bigcap_{a\in I}C_a,
\]
omitting empty intersections. Since $B$ has simple normal crossings, every $C_I$ is smooth of codimension $|I|$, although it may be disconnected.

There is a $G$-equivariant \v{C}ech resolution
\begin{equation}
\label{eq:snc-cech}
    0\longrightarrow F|_B
    \longrightarrow
    \bigoplus_{|I|=1}F|_{C_I}
    \longrightarrow
    \bigoplus_{|I|=2}F|_{C_I}
    \longrightarrow\cdots
    \longrightarrow
    \bigoplus_{|I|=\dim X}F|_{C_I}
    \longrightarrow0.
\end{equation}
To make the $G$-action transparent, we write it with alternating cochains. A section in degree $q$ assigns to each ordered $q$-tuple $(a_1,\ldots,a_q)$ of distinct elements of $A$ a section $s_{a_1\cdots a_q}$ of $F$ on $C_{\{a_1,\ldots,a_q\}}$, alternating under permutations of the indices. The differential is the usual alternating sum of restrictions. The group acts by
\[
    (g\cdot s)_{ga_1\cdots ga_q}=g\cdot s_{a_1\cdots a_q},
\]
using the linearization of $F$, and this action commutes with the differential. Choosing an order on each $I$ identifies the degree-$q$ term with $\bigoplus_{|I|=q}F|_{C_I}$. Under this identification, an element $g$ with $gI=I$ acts on the summand $F|_{C_I}$ through the linearization, multiplied by $\operatorname{sgn}(g|_I)$.

Let $Z$ be a connected component of a nonempty $C_I$, and let
\[
    G_{I,Z}
    :=\{g\in G:gI=I,\ gZ=Z\}
\]
be the isotropy subgroup of the pair $(I,Z)$. Its permutation action on $I$ defines a sign character
\[
    \varepsilon_I(g):=\operatorname{sgn}(g|_I).
\]
The resolution \eqref{eq:snc-cech} gives the following formula.

\begin{proposition}[Equivariant SNC inclusion--exclusion]
\label{prop:snc-inclusion-exclusion}
For every $G$-linearized line bundle $F$ on $X$,
\begin{equation}
\label{eq:snc-euler}
    \chi_G(B,F|_B)
    =\sum_{q\geq1}(-1)^{q-1}
      \sum_{\substack{[(I,Z)]\\|I|=q}}
      \operatorname{Ind}_{G_{I,Z}}^G
      \left(
        \varepsilon_I\otimes
        \chi_{G_{I,Z}}(Z,F|_Z)
      \right),
\end{equation}
where the inner sum runs over the $G$-orbits of pairs $(I,Z)$.
\end{proposition}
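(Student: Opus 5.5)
The plan is to take the $G$-equivariant alternating \v{C}ech resolution \eqref{eq:snc-cech} and use additivity of $\chi_G$ along it. I will then identify each term $\chi_G$ of a degree-$q$ piece as a sum of induced representations over orbits of pairs $(I,Z)$.

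First I will check that \eqref{eq:snc-cech} is an exact sequence of $G$-equivariant sheaves on $X$. Exactness is local, so it can be verified in local coordinates where $B=\{z_1\cdots z_k=0\}$. There the complex is the standard resolution of $\mathcal O/(z_1\cdots z_k)$ by the Koszul-type \v{C}ech complex of the coordinate hyperplanes. This complex is exact because the rings $\mathcal O/(z_i:i\in I)$ form a distributive lattice of quotients. Equivalently, it is the tensor product of the two-term complexes $\mathcal O\to\mathcal O/z_i$ in the appropriate sense, and tensoring with the line bundle $F$ preserves exactness.

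Equivariance comes from the alternating-cochain description. The action $(g\cdot s)_{ga_1\cdots ga_q}=g\cdot s_{a_1\cdots a_q}$ visibly commutes with the alternating sum of restrictions, since $g$ maps $C_I$ to $C_{gI}$ compatibly with the linearization. The long exact cohomology sequences of a bounded exact complex of $G$-sheaves then give, in $\operatorname{Rep}(G)$,
\[
\chi_G(B,F|_B)=\sum_{q\ge1}(-1)^{q-1}\chi_G\Bigl(X,\textstyle\bigoplus_{|I|=q}F|_{C_I}\otimes(\text{sign twist})\Bigr).
\]
Here the degree-$q$ term carries the action described after \eqref{eq:snc-cech}.

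The remaining step, which I expect to be the main bookkeeping obstacle, is to decompose the degree-$q$ term. I will write it as $\bigoplus_{(I,Z)}F|_Z$, summing over pairs with $|I|=q$ and $Z$ a connected component of $C_I$. $G$ permutes these summands, sending $F|_Z$ to $F|_{gZ}$ on the index $gI$. The $G$-module $\bigoplus_{(I,Z)}H^i(Z,F|_Z)$ therefore splits over $G$-orbits of pairs. Each orbit piece is induced from the stabilizer $G_{I,Z}$ acting on the single summand $H^i(Z,F|_Z)$, by the standard criterion that a representation permuting summands transitively is induced from the stabilizer of one summand.

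The only care needed is the sign. An element $g\in G_{I,Z}$ acts on the ordered representative $s_{a_1\cdots a_q}$ by the linearization composed with reordering $(ga_1,\dots,ga_q)$ back to $(a_1,\dots,a_q)$. This reordering contributes exactly $\operatorname{sgn}(g|_I)=\varepsilon_I(g)$. The stabilizer representation is therefore $\varepsilon_I\otimes H^i(Z,F|_Z)$. Taking alternating sums over $i$ gives $\operatorname{Ind}_{G_{I,Z}}^G(\varepsilon_I\otimes\chi_{G_{I,Z}}(Z,F|_Z))$ for each orbit. Substituting this into the telescoped identity yields \eqref{eq:snc-euler}.
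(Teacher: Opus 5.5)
Your proposal is correct and follows the paper's proof: exactness of the equivariant alternating \v{C}ech resolution \eqref{eq:snc-cech}, additivity of $\chi_G$, and identification of each $G$-orbit of summands $F|_Z$ as induced from $G_{I,Z}$, twisted by the reordering sign $\varepsilon_I$. The only difference is the local exactness check. The paper grades by monomials $z^m$ and reduces each graded piece to the augmented cochain complex of the full simplex on $J_m=\{a\in A_x: m_a=0\}$. Your tensor-product-of-two-term-complexes argument also works, but since the factors $\mathcal O/z_i$ are not flat it needs a short induction using that the $z_i$ form a regular sequence.
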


\begin{proof}
Exactness of \eqref{eq:snc-cech} can be checked on completed stalks. At a point $x$, choose local coordinates in which the components of $B$ through $x$ are $C_a=\{z_a=0\}$ for $a$ in a subset $A_x\subseteq A$. The other components do not meet a neighborhood of $x$. Since $F$ is a line bundle, it suffices to treat $F=\mathcal O_X$. All maps preserve the grading by monomials $z^m$. A monomial $z^m$ is nonzero on $C_I$ exactly when $I\subseteq J_m:=\{a\in A_x:m_a=0\}$, and it is nonzero on $B$ exactly when $J_m\neq\emptyset$. Hence the $z^m$-graded piece of \eqref{eq:snc-cech} vanishes if $J_m=\emptyset$. Otherwise it is the augmented cochain complex of the full simplex on $J_m$, which is exact.

The equivariant Euler characteristic is additive on exact sequences of $G$-equivariant coherent sheaves, so
\[
    \chi_G(B,F|_B)
    =\sum_{q\geq1}(-1)^{q-1}
      \chi_G\Bigl(\bigoplus_{|I|=q}F|_{C_I}\Bigr).
\]
The degree-$q$ term is the direct sum of the sheaves $F|_Z$ over the pairs $(I,Z)$ with $|I|=q$. The group permutes these summands, and $G_{I,Z}$ is the stabilizer of $(I,Z)$. Hence the cohomology of the sum over one $G$-orbit of pairs is induced from the cohomology of a single summand, viewed as a $G_{I,Z}$-representation. By the description of the action above, $G_{I,Z}$ acts on $F|_Z$ through the linearization twisted by $\varepsilon_I$. Summing over the orbits gives \eqref{eq:snc-euler}.
\end{proof}

Only intersections among components of the same orbit divisor $B_j$ occur in \eqref{eq:snc-euler}. Intersections with other orbit divisors are already encoded in the restricted line bundle $\mathcal O_{B_j}(P_j\pm kB_j)$ and must not be counted again.

\subsection{Surface case and internal nodes}

Assume now that $X$ is a surface. An SNC orbit divisor has only pairwise transverse intersections, so the preceding inclusion--exclusion formula reduces to the equivariant normalization sequence.

Let
\[
    \mathcal N_j:=\operatorname{Sing}(B_j)
\]
be the set of internal nodes of $B_j$, namely the intersection points of two distinct components of that orbit divisor. For $p\in\mathcal N_j$, let $G_p$ denote its isotropy subgroup. The action of $G_p$ on the two local branches defines the sign character
\[
    \varepsilon_p\colon G_p\longrightarrow\{\pm1\}.
\]

For a $G$-linearized line bundle $F$ on $X$, Proposition~\ref{prop:snc-inclusion-exclusion} gives
\begin{equation}
\label{eq:surface-normalization}
\begin{aligned}
    \chi_G(B_j,F|_{B_j})
    ={}&\operatorname{Ind}_{H_j}^G
       \chi_{H_j}(C_j,F|_{C_j})\\
     &-\sum_{[p]\in\mathcal N_j/G}
       \operatorname{Ind}_{G_p}^G
       \bigl(F_p\otimes\varepsilon_p\bigr).
\end{aligned}
\end{equation}
Indeed, only $|I|\leq2$ occurs. The pairs with $|I|=1$ form one $G$-orbit, with stabilizer $H_j$ and trivial sign. A pair with $|I|=2$ is determined by a node $p$, and its stabilizer is $G_p$, since $G_p$ permutes the two components through $p$. Here $F_p$ is the one-dimensional representation of $G_p$ on the fiber
of $F$ at $p$. Equivalently, \eqref{eq:surface-normalization} comes from the equivariant normalization sequence. Locally, the last map in the normalization sequence takes
the difference of the values on the two branches. Exchanging them
changes its sign, which explains $\varepsilon_p$. Thus a node term
depends both on the branch permutation and on the fiber character in
the relevant peeling layer. All restrictions retain their induced
linearizations. In particular, $\mathcal O_X(C_j)|_{C_j}$ carries the
induced normal-bundle linearization.

For compact notation, set
\begin{equation}
\label{eq:curve-node-contribution}
    \mathcal C_j(F):=\chi_G(B_j,F|_{B_j}),
\end{equation}
computed by \eqref{eq:surface-normalization}.

Combining \eqref{eq:surface-normalization} with Proposition~\ref{prop:ordered-peeling} gives the surface formula.

\begin{theorem}[Surface divisor reduction formula]
\label{thm:divisor-reduction}
Let $X$ be a smooth projective surface over $\mathbb C$ with an action of a finite group $G$, and let
\[
    D=\sum_{j=1}^r m_jB_j
\]
be an ordered $G$-invariant divisor with SNC support as above. Then
\begin{align*}
    \chi_G(X,\mathcal O_X(D))
    ={}&\chi_G(X,\mathcal O_X)\\
    &+\sum_{\substack{1\leq j\leq r\\m_j>0}}
      \sum_{k=1}^{m_j}
      \mathcal C_j\!\left(
        \mathcal O_X(P_j+kB_j)
      \right)\\
    &-\sum_{\substack{1\leq j\leq r\\m_j<0}}
      \sum_{k=0}^{-m_j-1}
      \mathcal C_j\!\left(
        \mathcal O_X(P_j-kB_j)
      \right).
\end{align*}
\end{theorem}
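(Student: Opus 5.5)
The plan is to combine Proposition~\ref{prop:ordered-peeling} with the surface specialization~\eqref{eq:surface-normalization} of Proposition~\ref{prop:snc-inclusion-exclusion}. The statement is essentially a rewriting of the ordered peeling formula in which each Euler characteristic $\chi_G(B_j,\mathcal O_{B_j}(P_j\pm kB_j))$ is replaced by $\mathcal C_j$ of the corresponding line bundle on $X$. The first step is to check that the notation matches. For a $G$-linearized line bundle $F=\mathcal O_X(P_j\pm kB_j)$ on $X$, the sheaf $\mathcal O_{B_j}(P_j\pm kB_j)$ is by definition $F\otimes\mathcal O_{B_j}=F|_{B_j}$, with the restricted linearization. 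Here $F$ is equivariant because $P_j$ and $B_j$ are $G$-invariant Cartier divisors and carry their natural divisor linearizations. Hence $\chi_G(B_j,\mathcal O_{B_j}(P_j\pm kB_j))=\mathcal C_j(\mathcal O_X(P_j\pm kB_j))$ by definition~\eqref{eq:curve-node-contribution}.

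The second step is to confirm that the hypotheses of Proposition~\ref{prop:ordered-peeling} hold. The divisor $D$ is $G$-invariant with SNC support, and it is decomposed into reduced orbit divisors $B_j$ with nonzero coefficients $m_j$. The exact sequences defining the layers are sequences of $G$-equivariant coherent sheaves. This is because the inclusion $\mathcal O_X(E-B_j)\hookrightarrow\mathcal O_X(E)$ for a $G$-invariant divisor $E$ is induced by the canonical section of $\mathcal O_X(B_j)$, and that section is $G$-invariant under the natural divisor linearization. Additivity of $\chi_G$ on these sequences, followed by telescoping as in~\eqref{eq:positive-peeling} and~\eqref{eq:negative-peeling}, then gives the formula. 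Substituting the identification from the first step term by term yields exactly the displayed identity.

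The main point that needs care, although it is not a real obstacle, is that the right-hand side is computable through~\eqref{eq:surface-normalization}. That formula requires $B_j$ to be an SNC curve whose components have pairwise transverse intersections and no triple points. This holds because $\operatorname{Supp}(D)$ is SNC and $\dim X=2$, so only $|I|\le 2$ occurs in~\eqref{eq:snc-euler}. One also has to check that the restricted linearization on each node fiber is the one entering $F_p\otimes\varepsilon_p$, which is immediate from the description of the alternating \v{C}ech complex. The theorem is stated for an arbitrary (not necessarily effective) action, which is harmless here because the presentation $\mathcal O_X(D)$ is given as a hypothesis rather than produced by Hilbert~90. Finally, I would remark that the result is independent of the chosen ordering, since the left-hand side does not depend on it.
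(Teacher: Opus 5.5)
Your proposal is correct and takes the same approach as the paper, which obtains the theorem by substituting the definition $\mathcal C_j(F)=\chi_G(B_j,F|_{B_j})$ from \eqref{eq:curve-node-contribution} into Proposition~\ref{prop:ordered-peeling}; \eqref{eq:surface-normalization} only serves to make those terms computable. Your additional checks are accurate: the equivariance of the layer sequences, the identification $\mathcal O_{B_j}(P_j\pm kB_j)=F|_{B_j}$, and the remark that non-effective actions cause no problem.
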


In a positive layer, the internal-node terms enter with a minus sign through $\mathcal C_j$. In a negative layer, they acquire a plus sign after the outer minus sign is distributed. The point terms are explicit representations determined by the fiber linearizations and the branch-permutation characters.

The curve terms may be evaluated using Arapura's Chevalley--Weil formula
\cite[Theorem~2.4]{arapura_residues_2022} when $H_j$ acts effectively on
$C_j$. Otherwise, one can compute the character element by element.
If $h\in H_j$ acts trivially on $C_j$, it acts on $F|_{C_j}$ by a constant
scalar, and its trace on the Euler characteristic is that scalar times
$\deg(F|_{C_j})+1-g(C_j)$. For any other $h$, restrict to $\langle h\rangle$.
The kernel of its action on $C_j$ acts on the line bundle by a character.
Extend this character to the cyclic group $\langle h\rangle$ and twist
by its inverse. The twisted linearization factors through the effective
cyclic quotient, where Arapura's formula applies. Twisting back recovers
the required character value. Thus no fiber character is lost by passing
to an effective action.

\subsection{Higher-dimensional form}

In arbitrary dimension, combine Proposition~\ref{prop:ordered-peeling}
with Proposition~\ref{prop:snc-inclusion-exclusion}, using
$F=\mathcal O_X(P_j+kB_j)$ in a positive layer and
$F=\mathcal O_X(P_j-kB_j)$ in a negative layer. This reduces
$\chi_G(X,\mathcal O_X(D))$ to $\chi_G(X,\mathcal O_X)$ and equivariant
Euler characteristics on smooth strata of smaller dimension. Curve
strata are treated as above. Higher-dimensional strata require further
reduction or a suitable equivariant index formula.

%% file: ch-obstruction.tex
\ProvidesFile{ch-obstruction.tex}

% The diagrams below require \usepackage{tikz}.

\section{The Quotient-Resolution Method and the Logarithmic Obstruction}
\label{sec:obstruction}

After the divisor-peeling reduction of Section~\ref{sec:reduction}, it remains to compute the equivariant Euler characteristic of the structure sheaf. The most direct surface analogue of Arapura's method is to take the quotient, pass to a resolution, and compute the Chern classes of the multiplicity bundles from the residues of their induced connections.

This method works directly for singularities of type $A_n$. For the $D_4$ singularity, however, the induced connection on the natural extension is not logarithmic. The explicit pole exhibited below explains the obstruction and motivates the pre-quotient blow-up method of Section~\ref{sec:blowup}.

\subsection{Natural extensions on a quotient resolution}

Let $X$ be a smooth projective surface with an effective action of a finite group $G$, and let
\[
    \pi\colon X\longrightarrow Y:=X/G
\]
be the quotient map. Fix an irreducible character $\xi\in\operatorname{Irr}(G)$ and set
\[
    \mathcal F_\xi
    :=\operatorname{Hom}_G(W_\xi,\pi_*\mathcal O_X).
\]
As in Section~\ref{sec:preliminaries}, this is a reflexive sheaf of rank $\xi(1)$ on $Y$, and
\begin{equation}
\label{eq:obstruction-isotypic-euler}
    \chi(Y,\mathcal F_\xi)
    =\left\langle
        \chi_G(X,\mathcal O_X),\xi
      \right\rangle_G.
\end{equation}
The full isotypic summand is $W_\xi\otimes\mathcal F_\xi$ and has rank
$\xi(1)^2$. Working with $\mathcal F_\xi$ gives the multiplicity directly.

Let
\[
    \mu\colon S\longrightarrow Y
\]
be a resolution that is an isomorphism over $Y_{\mathrm{reg}}$. Since quotient singularities are rational, Proposition~\ref{extension_prop} applies to the natural extension
\[
    \mathcal V_\xi^{\mathrm{nat}}
    :=\mu^*\mathcal F_\xi/\operatorname{torsion}.
\]
Thus $\mathcal V_\xi^{\mathrm{nat}}$ is locally free and
\begin{equation}
\label{eq:obstruction-natural-pushforward}
    \mu_*\mathcal V_\xi^{\mathrm{nat}}
    \cong\mathcal F_\xi,
    \qquad
    R^i\mu_*\mathcal V_\xi^{\mathrm{nat}}=0
    \quad(i>0).
\end{equation}
Consequently,
\begin{equation}
\label{eq:obstruction-natural-euler}
    \chi(S,\mathcal V_\xi^{\mathrm{nat}})
    =\chi(Y,\mathcal F_\xi).
\end{equation}

Let $B\subset Y$ be the reduced divisorial branch locus and put
$Y^\circ=Y_{\mathrm{reg}}\setminus B$. On $Y^\circ$, the trivial connection
\[
    d\colon\mathcal O_X\longrightarrow\Omega_X^1
\]
induces flat connections on the direct-image sheaf and its multiplicity
sheaves. Assume that the exceptional divisor together with the strict
transform of $B$ is SNC. Regular singularity then gives a locally free
logarithmic extension to $S$. The issue is whether the induced meromorphic
connection is logarithmic on the natural extension
$\mathcal V_\xi^{\mathrm{nat}}$ itself.

In the two local models below, the isotropy subgroup acts freely off the
origin, so the connection is defined on the punctured quotient and only
the exceptional divisor is needed. An isolated quotient singularity by
itself does not exclude divisorial ramification.

\subsection{The cyclic case: singularities of type \texorpdfstring{$A_n$}{An}}

Set $N:=n+1$, with $n\geq1$. Let $p\in X$ have cyclic isotropy subgroup
\[
    G_p=\langle g\rangle\cong\mathbb Z_N,
\]
acting on the coordinate functions $(x,y)$ by
\[
    g\cdot x=\zeta_Nx,\qquad g\cdot y=\zeta_N^{-1}y,
    \qquad \zeta_N=\exp\!\left(\frac{2\pi i}{N}\right).
\]
Thus $\chi(g)=\zeta_N$ defines the character of $x$. As in
Sections~\ref{sec:blowup} and~\ref{sec:general-hj}, these are function
characters for $(g\cdot h)(z)=h(g^{-1}z)$.
The quotient has an $A_n$ singularity because
\[
    \mathbb C[x,y]^{G_p}
    \cong
    \frac{\mathbb C[a,b,c]}{(ab-c^N)},
    \qquad
    a=x^N,\quad b=y^N,\quad c=xy.
\]
Its minimal resolution has exceptional chain
\[
    F_1,\ldots,F_n,
\]
ordered from the strict transform of $x=0$ to that of $y=0$, as in
Section~\ref{sec:general-hj}. The intersections are
\[
    F_i^2=-2,
    \qquad
    F_i\cdot F_{i+1}=1,
    \qquad
    F_i\cdot F_j=0
    \quad(|i-j|>1).
\]

\begin{figure}[ht]
\centering
\begin{tikzpicture}[
    vertex/.style={circle,fill=black,inner sep=1.8pt},
    every node/.style={font=\small}
]
\node[vertex,label=above:{$F_1$}] (E1) at (0,0) {};
\node[vertex,label=above:{$F_2$}] (E2) at (1.4,0) {};
\node (dots) at (2.8,0) {$\cdots$};
\node[vertex,label=above:{$F_{n-1}$}] (Enm) at (4.2,0) {};
\node[vertex,label=above:{$F_n$}] (En) at (5.6,0) {};

\draw (E1)--(E2);
\draw (E2)--(dots);
\draw (dots)--(Enm);
\draw (Enm)--(En);
\end{tikzpicture}
\caption{The exceptional chain in the minimal resolution of an $A_n$ singularity, shown schematically for $n\geq4$. For smaller $n$, use the corresponding shorter chain.}
\label{fig:An-exceptional-chain}
\end{figure}

For $0\leq i\leq n$, the resolution charts have coordinates
\[
    A=\frac{a}{c^{n-i}}
      =\frac{x^{i+1}}{y^{n-i}},
    \qquad
    B=\frac{b}{c^i}
      =\frac{y^{n-i+1}}{x^i}.
\]
For $1\leq i\leq n-1$, one has $F_i=\{A=0\}$ and
$F_{i+1}=\{B=0\}$. At $i=0$, $A=0$ is the strict transform of $x=0$
and $F_1=\{B=0\}$. At $i=n$, $F_n=\{A=0\}$ and $B=0$ is the
strict transform of $y=0$. The pullbacks are
\[
    a=A^{N-i}B^{n-i},\qquad
    b=A^iB^{i+1},\qquad c=AB.
\]
Differentiating gives
\[
    \begin{pmatrix}
        d\log A\\
        d\log B
    \end{pmatrix}
    =
    \begin{pmatrix}
        i+1 & -(n-i)\\
        -i & n-i+1
    \end{pmatrix}
    \begin{pmatrix}
        d\log x\\
        d\log y
    \end{pmatrix}.
\]
The determinant is $N$, and hence
\begin{equation}
\label{eq:An-dlog-inversion}
\begin{aligned}
    d\log x
    &=\frac{n-i+1}{N}\,d\log A
      +\frac{n-i}{N}\,d\log B,\\
    d\log y
    &=\frac{i}{N}\,d\log A
      +\frac{i+1}{N}\,d\log B.
\end{aligned}
\end{equation}

For $0\leq j<N$, the character module for $\chi^j$ in the local $G_p$-quotient model is
generated over the invariant ring by $x^j$ and $y^{N-j}$.
On the chart above,
\[
    \frac{x^j}{y^{N-j}}=A^{j-i}B^{j-i-1}.
\]
Thus $x^j$ is a frame when $j\leq i$, and $y^{N-j}$ is a frame when
$j\geq i+1$. The natural extension of the local direct-image sheaf
therefore has the frame
\[
    1,x,\ldots,x^i,y,\ldots,y^{n-i},
\]
with an empty range omitted. Using \eqref{eq:An-dlog-inversion}, the connection induced by $d$ satisfies
\begin{align*}
    \nabla(x^j)
    &=x^j\otimes
    \left(
        \frac{(n-i+1)j}{N}\,d\log A
        +\frac{(n-i)j}{N}\,d\log B
    \right),\\
    \nabla(y^k)
    &=y^k\otimes
    \left(
        \frac{ik}{N}\,d\log A
        +\frac{(i+1)k}{N}\,d\log B
    \right).
\end{align*}
The connection matrix is diagonal and logarithmic on every chart,
including the two end charts. Its residues along the strict transforms of
$x=0$ and $y=0$ are zero.

For $0\leq j\leq n$, the natural extension of the $\chi^j$-character
module is a line bundle. Its residue along $F_i$ is
\begin{equation}
\label{eq:An-residues}
    R_i^{(j)}=
    \begin{cases}
        \displaystyle \frac{(N-j)i}{N},
        &1\leq i\leq j,\\[0.8em]
        \displaystyle
        \frac{j(N-i)}{N},
        &j\leq i\leq n.
    \end{cases}
\end{equation}
Here $1\leq i\leq n$. The two expressions agree at $i=j$, and
$R_i^{(0)}=0$. This is $R_{i,\chi^j}$ in the notation of
Section~\ref{sec:general-hj}.

\begin{proposition}[Local correction for an $A_n$ singularity]
\label{prop:An-local-correction}
For $0\leq j\leq n$, the correction attached to the local character $\chi^j$ is
\begin{equation}
\label{eq:An-character-correction}
    m_{p,\chi^j}
    =\frac{j(N-j)}{2N}.
\end{equation}
Equivalently, the local correction class function is
\begin{equation}
\label{eq:An-local-character-class}
    \rho_{A_n}
    =\sum_{j=0}^{N-1}
      \frac{j(N-j)}{2N}\,\chi^j
    \in
    \operatorname{Rep}(G_p)\otimes_{\mathbb Z}\mathbb Q.
\end{equation}
For $\xi\in\operatorname{Irr}(G)$, the contribution of the orbit of $p$ is
\begin{equation}
\label{eq:An-global-pairing}
    m_{p,\xi}
    =\left\langle
        \rho_{A_n},
        \operatorname{Res}_{G_p}^G\xi
      \right\rangle_{G_p}.
\end{equation}

The same class function can also be written in terms of the symmetric-power characters
\[
    \tau_{p,k}
    :=\operatorname{char}\!\left(
        \operatorname{Sym}^k(T_{X,p}^*)
      \right),
    \qquad
    \tau_{p,k}=0\quad(k<0),
\]
as
\begin{equation}
\label{eq:An-class-function}
    \rho_{A_n}
    =\sum_{k=1}^{N-1}
      \frac{k(N-k)}{4N}
      \bigl(\tau_{p,k}-\tau_{p,k-2}\bigr).
\end{equation}
\end{proposition}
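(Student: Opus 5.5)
The plan is to read off the local correction directly from the residue--Euler formula \eqref{eq:prelim-residue-euler}, applied to the natural extension of the $\chi^j$-character module on the minimal resolution of the $A_n$ singularity. The local correction is taken to be the negative of the exceptional part of that formula. Concretely, with $D=\sum_i F_i$ and the line-bundle residues $R_i^{(j)}$ from \eqref{eq:An-residues}, set
\[
    m_{p,\chi^j}
    =-\frac12\sum_{i=1}^n\Bigl((R_i^{(j)})^2F_i^2+R_i^{(j)}K_S\cdot F_i\Bigr)
     -\sum_{i=1}^{n-1}R_i^{(j)}R_{i+1}^{(j)}.
\]
This matches the shape of $m_\xi(\mathcal O_X)$ in Theorem~\ref{thm:intro-global}. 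The residues along the strict transforms of $x=0$ and $y=0$ vanish, so only the chain contributes.

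Since the $F_i$ are $(-2)$-curves, $K_S\cdot F_i=0$, so the linear terms drop out. The quadratic part then becomes
\[
    -\tfrac12 R^{\mathsf T}MR,
\]
where $M$ is the $A_n$ Cartan-type intersection matrix. Explicitly,
\[
    -\tfrac12R^{\mathsf T}MR
    =\sum_i (R_i)^2-\sum_i R_iR_{i+1}
    =\tfrac12\Bigl(R_1^2+R_n^2+\sum_{i=1}^{n-1}(R_{i+1}-R_i)^2\Bigr).
\]
The residues in \eqref{eq:An-residues} are piecewise linear in $i$ with a single break at $i=j$:
\begin{itemize}
    \item for $i\le j$, the slope is $(N-j)/N$;
    \item for $i\ge j$, the slope is $-j/N$;
    \item extended to $i=0$ and $i=N$, both ends vanish.
\end{itemize}
Hence the sum of squared increments $\sum_{i=0}^{N-1}(R_{i+1}-R_i)^2$ equals
\[
    j\Bigl(\tfrac{N-j}{N}\Bigr)^2+(N-j)\Bigl(\tfrac{j}{N}\Bigr)^2
    =\frac{j(N-j)}{N}.
\]
Halving gives \eqref{eq:An-character-correction}. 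For $j=0$ all residues vanish, which is consistent.

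Next, \eqref{eq:An-global-pairing} follows from Frobenius reciprocity. The orbit of $p$ contributes to the $\xi$-multiplicity sheaf through $\operatorname{Res}_{G_p}^G\xi$: near $p$, the module $\mathcal F_\xi$ decomposes into the local character modules with multiplicities $\langle\chi^j,\operatorname{Res}\xi\rangle_{G_p}$. Residues and intersection terms are additive over these local summands, and the orbit is identified with a single point on $Y$. Formula \eqref{eq:An-local-character-class} is just the generating form of this decomposition.

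Finally, for \eqref{eq:An-class-function}, the characters of $T^*_{X,p}$ are $\chi$ and $\chi^{-1}$. Therefore
\[
    \tau_{p,k}-\tau_{p,k-2}=\chi^k+\chi^{-k},
\]
since $\operatorname{Sym}^k$ has characters $\chi^{k-2l}$ for $0\le l\le k$. Substituting, the right side of \eqref{eq:An-class-function} is
\[
    \sum_{k=1}^{N-1}\frac{k(N-k)}{4N}\bigl(\chi^k+\chi^{-k}\bigr).
\]
Reindexing $k\mapsto N-k$ in the $\chi^{-k}$ part, and using $\chi^{-k}=\chi^{N-k}$ together with the symmetry of $k(N-k)$, the two halves combine to $\sum_k\frac{k(N-k)}{2N}\chi^k$.

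The main obstacle is bookkeeping rather than computation. One must confirm that the natural extension is indeed the line bundle with residues \eqref{eq:An-residues}; the preceding chart computation supplies this. One must also fix the sign convention so that the correction enters \eqref{eq:master-err-intro} with the stated sign, which I would check against \eqref{eq:prelim-residue-euler}. The telescoping identity for the squared increments is the only genuinely computational step.
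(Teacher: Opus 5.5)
Your proposal is correct and follows the paper's proof. It uses the same allocation of exceptional terms in the residue formula, crepancy to remove the canonical terms, Frobenius reciprocity for the pairing, and the identity $\tau_{p,k}-\tau_{p,k-2}=\chi^k+\chi^{-k}$ for the symmetric-power form. The only difference is minor: you evaluate $\sum_i R_i^2-\sum_i R_iR_{i+1}$ as half the sum of squared first differences of the piecewise-linear residue profile, whereas the paper sums by parts the other way, using that $2R_i-R_{i-1}-R_{i+1}$ is $1$ at $i=j$ and $0$ elsewhere to get $\tfrac12 R_j^{(j)}$, and these are equivalent.
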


\begin{proof}
The $A_n$ resolution is crepant, so
\[
    K_S\cdot F_i=0
    \qquad(1\leq i\leq n).
\]
Retain the terms of the global residue formula assigned to this chain.
The boundary residues vanish, so the correction coefficient is
\[
    m_{p,\chi^j}
    =\sum_{i=1}^{n}\bigl(R_i^{(j)}\bigr)^2
      -\sum_{i=1}^{n-1}R_i^{(j)}R_{i+1}^{(j)}.
\]
For $j=0$, all residues vanish. For $1\leq j\leq n$, put
$R_0^{(j)}=R_N^{(j)}=0$. By \eqref{eq:An-residues},
$2R_i^{(j)}-R_{i-1}^{(j)}-R_{i+1}^{(j)}$ is $1$ at $i=j$ and
zero elsewhere. Hence
\[
\begin{aligned}
    m_{p,\chi^j}
    &=\frac12\sum_{i=1}^{n}R_i^{(j)}
       \bigl(2R_i^{(j)}-R_{i-1}^{(j)}-R_{i+1}^{(j)}\bigr)\\
    &=\frac12R_j^{(j)}
     =\frac{j(N-j)}{2N}.
\end{aligned}
\]
This proves \eqref{eq:An-character-correction} and
\eqref{eq:An-local-character-class}. Locally, the natural extension of
$\mathcal F_\xi$ splits into these character line bundles with the
multiplicities in $\operatorname{Res}_{G_p}^G\xi$. Frobenius reciprocity
then gives \eqref{eq:An-global-pairing}.

For the symmetric-power form, the cotangent representation has characters $\chi$ and $\chi^{-1}$, so
\[
    \tau_{p,k}
    =\sum_{a=0}^{k}\chi^{k-2a}.
\]
For $k\geq1$,
\[
    \tau_{p,k}-\tau_{p,k-2}
    =\chi^k+\chi^{-k}.
\]
For $1\leq j<N$, the terms indexed by $k=j$ and $k=N-j$ in \eqref{eq:An-class-function} give the coefficient
\[
    2\cdot\frac{j(N-j)}{4N}
    =\frac{j(N-j)}{2N}
\]
of $\chi^j$. If $2j=N$, the two characters coincide and occur with multiplicity two. The coefficient of the trivial character is zero. Thus \eqref{eq:An-class-function} agrees with \eqref{eq:An-local-character-class}.
\end{proof}

\begin{remark}
The formula in local irreducible characters, \eqref{eq:An-local-character-class}, is the primary form used later. Equation~\eqref{eq:An-class-function} is a useful geometric reformulation in terms of the cotangent representation.
\end{remark}

\subsection{The \texorpdfstring{$D_4$}{D4} obstruction}

We next consider the binary dihedral group
\[
    G_p\cong\operatorname{Dic}_2\cong Q_8
\]
acting on coordinate functions by
\[
    g=
    \begin{pmatrix}
        i&0\\
        0&-i
    \end{pmatrix},
    \qquad
    h=
    \begin{pmatrix}
        0&1\\
        -1&0
    \end{pmatrix}.
\]
Its invariant ring is
\[
    \mathbb C[x,y]^{G_p}
    \cong
    \frac{\mathbb C[a,b,c]}
          {(a^2-b^2c+4c^3)},
\]
where
\[
    a=xy(x^4-y^4),
    \qquad
    b=x^4+y^4,
    \qquad
    c=x^2y^2.
\]
The quotient has a $D_4$ singularity. Its minimal resolution has a central $(-2)$-curve $T$ meeting three disjoint outer $(-2)$-curves $F_1,F_2,F_3$.

\begin{figure}[htbp]
\centering
\begin{tikzpicture}[
    vertex/.style={circle,fill=black,inner sep=2.2pt},
    every node/.style={font=\small}
]
    \node[vertex,label=below:$T$] (E1) at (0,0) {};
    \node[vertex,label=below:$F_1$] (E2) at (-1.8,0) {};
    \node[vertex,label=below:$F_2$] (E3) at (1.8,0) {};
    \node[vertex,label=right:$F_3$] (E4) at (0,1.5) {};
    \draw (E1)--(E2);
    \draw (E1)--(E3);
    \draw (E1)--(E4);
\end{tikzpicture}
\caption{The exceptional curves in the minimal resolution of a $D_4$ singularity.}
\label{fig:D4-dual-graph}
\end{figure}

Near $T\cap F_1$, use coordinates
\[
    A=\frac{a}{c}
      =\frac{x^4-y^4}{xy},
    \qquad
    C=\frac{c}{b}
      =\frac{x^2y^2}{x^4+y^4},
\]
with $T=\{A=0\}$ and $F_1=\{C=0\}$. These coordinates come from
blowing up the origin and then resolving the node with $c/b=0$.
The pullbacks of the invariant functions are
\[
    a=\frac{A^3C^2}{1-4C^2},\qquad
    b=\frac{A^2C}{1-4C^2},\qquad
    c=\frac{A^2C^2}{1-4C^2}.
\]
We work near $(A,C)=(0,0)$, where $1-4C^2$ is a unit.
A direct calculation gives
\begin{align}
\label{eq:D4-dlog-x}
    d\log x
    &=\frac12d\log A
      +\frac{1}{2(1-4C^2)}d\log C
      -\frac{xy}{4AC}d\log C,\\
\label{eq:D4-dlog-y}
    d\log y
    &=\frac12d\log A
      +\frac{1}{2(1-4C^2)}d\log C
      +\frac{xy}{4AC}d\log C.
\end{align}
This chart is enough to exhibit the obstruction.

The group $Q_8$ has four irreducible characters of degree one and one
irreducible character $\theta$ of degree two. In this local quotient model, the multiplicity bundle
$\mathcal V_\theta^{\mathrm{nat}}$ has rank two. For the calculation,
we use the full $\theta$-isotypic extension
$W_\theta\otimes\mathcal V_\theta^{\mathrm{nat}}$, which has rank
\[
    \theta(1)^2=4
\]
and has the local frame
\[
    x,\quad y,\quad y^3,\quad x^3.
\]
To check that these form a frame of the natural extension, note that
the $\theta$-isotypic module consists of the polynomials of odd total
degree. The central element $-I$ acts by $-1$ precisely on this isotypic
summand. On our chart, the relations
\[
    x^4=\frac{b+Axy}{2},\qquad
    y^4=\frac{b-Axy}{2},\qquad x^2y^2=c
\]
reduce every odd monomial to an $\mathcal O_S$-linear combination of
$x,y,x^3,y^3,x^2y,xy^2$. The remaining two monomials satisfy
\[
    x^2y=ACx+2Cy^3,\qquad
    xy^2=-ACy+2Cx^3.
\]
Thus the four displayed sections generate the natural extension.
It is locally free of rank four by Proposition~\ref{extension_prop},
so they form a frame.

Write $\nabla$ for the connection on this full isotypic extension.
Equation~\eqref{eq:D4-dlog-x} gives
\[
    \nabla(x)
    =x\otimes
    \left(
        \frac12d\log A
        +\frac{1}{2(1-4C^2)}d\log C
    \right)
    -x^2y\otimes\frac{1}{4AC}d\log C.
\]
Substituting the relation for $x^2y$, we obtain
\begin{equation}
\label{eq:D4-pole-obstruction}
\begin{aligned}
    \nabla(x)
    ={}&x\otimes
    \left(
        \frac12d\log A
        +\frac{1+4C^2}{4(1-4C^2)}d\log C
    \right)\\
    &-y^3\otimes\frac{1}{2A}d\log C.
\end{aligned}
\end{equation}

\begin{proposition}[Failure of logarithmicity for $D_4$]
\label{prop:D4-log-obstruction}
The induced connection on the natural multiplicity bundle
$\mathcal V_\theta^{\mathrm{nat}}$ is not logarithmic along the
exceptional divisor. The same holds for the full $\theta$-isotypic extension.
\end{proposition}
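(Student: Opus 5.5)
The plan is to read off the obstruction directly from \eqref{eq:D4-pole-obstruction} in the frame $x,y,y^3,x^3$ of the full $\theta$-isotypic extension $W_\theta\otimes\mathcal V_\theta^{\mathrm{nat}}$, and then descend the conclusion to the rank-two multiplicity bundle. Logarithmicity along $T\cup F_1=\{AC=0\}$ near the point $A=C=0$ means that in a local frame the connection matrix has entries in $\Omega^1_S(\log(T+F_1))$, which is free with basis $d\log A,\ d\log C$ and regular coefficients. This is independent of the chosen frame, since a holomorphic change of frame $P$ replaces the matrix by $P^{-1}AP+P^{-1}dP$, and $dP$ is regular. So it suffices to exhibit one frame element whose covariant derivative has a coefficient that is not regular.

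First, I would confirm that $x,y,y^3,x^3$ is genuinely a local frame at $(0,0)$. The text already argues this: they generate the natural extension, and that extension is locally free of rank four by Proposition~\ref{extension_prop}. Next, in \eqref{eq:D4-pole-obstruction}, the $y^3$-component of $\nabla(x)$ is $-\frac{1}{2A}\,d\log C=-\frac{dC}{2AC}$. Its coefficient with respect to $d\log C$ is $-1/(2A)$, which has a genuine pole along $T=\{A=0\}$. Since $d\log A$ and $d\log C$ form a basis of the log forms, this entry does not lie in $\mathcal O_S\cdot d\log A+\mathcal O_S\cdot d\log C$. Hence the connection on the full isotypic extension is not logarithmic along $T$.

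The main step, and the one needing care, is passing from $W_\theta\otimes\mathcal V_\theta^{\mathrm{nat}}$ to $\mathcal V_\theta^{\mathrm{nat}}$. I would argue as follows. The natural extension commutes with the canonical decomposition $\pi_*\mathcal O=W_\theta\otimes\mathcal F_\theta$, because $\mu^*$ and quotienting by torsion commute with tensoring by the constant vector space $W_\theta$. The induced connection on the full summand is $\mathrm{id}_{W_\theta}\otimes\nabla_\theta$, since $G$ acts $\mathcal O_Y$-linearly and commutes with $d$. Choosing a basis $w_1,w_2$ of $W_\theta$ and a local frame $v_1,v_2$ of $\mathcal V_\theta^{\mathrm{nat}}$, the frame $w_k\otimes v_l$ of the full extension has a block-diagonal connection matrix with two copies of the matrix of $\nabla_\theta$. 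If $\nabla_\theta$ were logarithmic, then so would be this block matrix, and by frame-independence so would the connection on the full extension. That would contradict the previous paragraph. Thus both statements follow, with the only computational input being the relation $x^2y=ACx+2Cy^3$ already used to derive \eqref{eq:D4-pole-obstruction}.
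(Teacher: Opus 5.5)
Your proposal is correct and follows the paper's proof. You read off the non-holomorphic coefficient $-\frac{1}{2A}\,d\log C$ of the frame vector $y^3$ in $\nabla(x)$, then pass from the full isotypic extension to $\mathcal V_\theta^{\mathrm{nat}}$ using the decomposition $\mathrm{id}_{W_\theta}\otimes\nabla_\theta$. Your remarks on the frame-independence of logarithmicity, and on the natural extension commuting with $W_\theta\otimes-$, make explicit what the paper uses implicitly.
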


\begin{proof}
Near $T\cap F_1$, the sheaf of logarithmic one-forms is freely generated by
\[
    d\log A=\frac{dA}{A},
    \qquad
    d\log C=\frac{dC}{C}.
\]
A logarithmic connection matrix in a holomorphic frame must have
holomorphic coefficients in this basis. The coefficient of $y^3$ in
\eqref{eq:D4-pole-obstruction} is $-\frac{1}{2A}d\log C$, and
\[
    \frac1A d\log C
    \notin
    \Omega_S^1\bigl(\log(T+F_1)\bigr).
\]
Since $y^3$ is a member of the frame, this pole proves that the
connection on the full isotypic extension is not logarithmic. That
connection is the identity on $W_\theta$ tensored with the connection
on $\mathcal V_\theta^{\mathrm{nat}}$, so the multiplicity connection
cannot be logarithmic either.
\end{proof}

The flat connection on the punctured quotient is still regular singular,
so it admits a locally free logarithmic extension. The natural extension,
whose Euler characteristic equals that of the reflexive multiplicity
sheaf on the quotient, is not such an extension.

For $D_4$, replacing the frame by $x,y,y^3/A,x^3/A$ gives a logarithmic
extension on this chart. The displayed formula handles $x$, and a direct
calculation gives
\[
\begin{aligned}
    \nabla(y^3/A)
    ={}&\frac{y^3}{A}\otimes
       \left(\frac12d\log A
          +\frac{3(1+4C^2)}{4(1-4C^2)}d\log C\right)\\
       &+x\otimes\frac{3C^2}{2(1-4C^2)}d\log C.
\end{aligned}
\]
The other two frame vectors follow by $Q_8$-equivariance. A global
calculation would then require checking the transition functions on all
resolution charts and computing the Euler characteristic of the difference
between the two extensions, as in \eqref{eq:prelim-extension-comparison}.
The same bookkeeping becomes much harder for larger non-abelian isotropy subgroups.

We therefore use the $D_4$ calculation only to show the obstruction.
In Section~\ref{sec:blowup}, we instead blow up the fixed point before
taking the quotient. The isotropy subgroups along the exceptional curve
then act diagonally, and after removing quasi-reflections the remaining
quotient singularities are cyclic of Hirzebruch--Jung type.

%% file: ch-blowup.tex
\ProvidesFile{ch-blowup.tex}

% The diagrams below require:
% \usepackage{tikz,tikz-cd}
% \usetikzlibrary{positioning}

\section{The Pre-Quotient Blow-Up Method}
\label{sec:blowup}

Section~\ref{sec:obstruction} showed why resolving the quotient first can be difficult for a non-cyclic isotropy subgroup: the connection on the natural extension need not be logarithmic. An elementary modification can sometimes repair the extension, but such a construction depends on detailed local coordinates and becomes difficult for larger groups.

We now reverse the order of operations. We first blow up the fixed point on the smooth surface and then take the quotient. The exceptional curve records the projective action of the original isotropy subgroup. At its special points, the new isotropy subgroups are diagonal, and after removing quasi-reflections the remaining quotient singularities are cyclic. This reduces the non-abelian local problem to a collection of cyclic calculations. Two results used below are proved for general diagonal actions in Section~\ref{sec:general-hj}: the natural extensions are logarithmic, and the local correction does not depend on the model used to compute it.

\subsection{Blowing up before taking the quotient}

Let $(X,p)$ be a smooth complex surface germ with an effective action of a finite group
\[
    G_p\subset \operatorname{GL}(T_{X,p})
\]
fixing $p$. We identify the isotropy subgroup with its faithful tangent representation. Let
\[
    f\colon (X',E)\longrightarrow (X,p)
\]
be the blow-up at $p$. Then
\[
    E=f^{-1}(p)=\mathbb P(T_{X,p})\cong\mathbb P^1,
    \qquad E^2=-1.
\]
The action of $G_p$ lifts uniquely to $X'$, and $E$ is $G_p$-invariant.

Globally, if $p$ is not fixed by all of $G$, one blows up the full orbit $G\cdot p$. Equivariant birational invariance gives
\begin{equation}
\label{eq:blowup-birational-invariance}
    \chi_G(X',\mathcal O_{X'})
    =\chi_G(X,\mathcal O_X).
\end{equation}
Thus the blow-up does not change the equivariant Euler characteristic. It replaces the local contribution at $p$ by contributions supported on the exceptional curve.

Throughout this section, characters label the action on coordinate functions:
\[
    (g\cdot h)(z)=h(g^{-1}z).
\]
They are the inverses of the corresponding tangent characters, as in
Section~\ref{sec:general-hj}. We write $\zeta_m=\exp(2\pi i/m)$.

\subsection{Isotropy subgroups on the exceptional curve}

For $q\in E$, let
\[
    G_q:=\{g\in G_p:gq=q\}
\]
be its isotropy subgroup. A point of $E=\mathbb P(T_{X,p})$ corresponds to a line in $T_{X,p}$. This gives the main local simplification.

\begin{lemma}[Isotropy subgroups after blowing up]
\label{lem:diag-formal}
Let $q\in E$.
\begin{enumerate}
    \item There are local holomorphic coordinates $(u,v)$ at $q$, with $E=\{u=0\}$, in which the action of $G_q$ is diagonal.

    \item The group $G_q$ is abelian. If $G_p\subset\operatorname{SL}_2(\mathbb C)$, then $G_q$ is cyclic.

    \item In general, let $H_q\trianglelefteq G_q$ be the subgroup generated by the quasi-reflections in this action. The quotient $X'/H_q$ is smooth near the image of $q$, and the residual group $G_q/H_q$ is cyclic. Hence $X'/G_q$ has, at worst, a cyclic Hirzebruch--Jung singularity at the image of $q$.
\end{enumerate}
\end{lemma}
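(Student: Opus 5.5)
The plan is to linearize first and then read everything off the structure of the blow-up of a linear action. By Cartan's linearization lemma (averaging a coordinate system over the finite group), we may take coordinates $(x,y)$ at $p$ in which $G_p$ acts linearly through its tangent representation. The blow-up of $\mathbb C^2$ at the origin is $G_p$-equivariantly the total space of $\mathcal O_{\mathbb P^1}(-1)$ over $E=\mathbb P(T_{X,p})$. Fix $q\in E$ corresponding to a line $\ell\subset T_{X,p}$. Every $g\in G_q$ preserves $\ell$. Since $G_q$ is finite, its action on $T_{X,p}$ is completely reducible, so there is a $G_q$-stable complement $\ell'$.

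\textbf{Part (1).} Choose linear coordinates $(x,y)$ adapted to $\ell'\oplus\ell$, so that $\ell=\{x=0\}$ and $\ell'=\{y=0\}$ are both $G_q$-stable. Then each $g\in G_q$ acts diagonally: $x\mapsto\alpha(g)x$ and $y\mapsto\beta(g)y$. In the chart $u=y$, $v=x/y$ of the blow-up containing $q$, we have $E=\{u=0\}$ and $q=(0,0)$, and $g$ acts by $u\mapsto\beta u$, $v\mapsto\alpha\beta^{-1}v$. This action is again diagonal, which proves (1). Moreover $G_q$ embeds in the diagonal torus, because $G_p$ acts faithfully on $T_{X,p}$; hence $G_q$ is abelian. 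If $\det=1$, the character $\beta$ alone determines $g$, since $\alpha=\beta^{-1}$. Thus $G_q$ embeds in $\mathbb C^*$ and is cyclic, which proves (2).

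\textbf{Part (3).} $G_q$ acts faithfully and diagonally on $(u,v)$, again because an element acting trivially on the blow-up acts trivially on $T_{X,p}$. A quasi-reflection in a diagonal action fixes a coordinate line pointwise. So $H_q=H_u\times H_v$, where $H_u$ (respectively $H_v$) is the subgroup acting trivially on $v$ (respectively $u$). Each of these embeds in $\mu_\infty$ through the other coordinate, so $H_u$ and $H_v$ are cyclic, of orders $a$ and $b$ say. Their intersection is trivial, and the product is direct because the groups commute. The invariant ring is $\mathbb C[[u,v]]^{H_q}=\mathbb C[[u^a,v^b]]$, so $X'/H_q$ is smooth at the image of $q$. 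This is the Chevalley--Shephard--Todd theorem in its elementary diagonal form.

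The residual group $K=G_q/H_q$ acts diagonally on $(U,V)=(u^a,v^b)$. It contains no quasi-reflections: if an element $gH_q$ fixed $U$, then $g$ would multiply $u$ by an $a$-th root of unity. Multiplying $g$ by a suitable element of $H_v$ would then produce an element acting trivially on $u$. That element lies in $H_u\subset H_q$, so $g$ itself lies in $H_q$.

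The step I expect to need the most care is showing that $K$ is cyclic. The key point is that $K$ acts faithfully on the $(U,V)$-plane, because the invariant-theoretic quotient by $H_q$ identifies $K$ with the Galois group of $\mathbb C((u,v))^{H_q}$ over $\mathbb C((u,v))^{G_q}$. Since $K$ is small, the character map $K\to\mathbb C^*$, $k\mapsto(\text{action on }U)$, is injective: an element in its kernel would fix $U$ and hence be a quasi-reflection or the identity. Therefore $K$ is a finite subgroup of $\mathbb C^*$, hence cyclic. It acts on $(U,V)$ by $\tfrac1n(1,q')$ with $\gcd(n,q')=1$. Finally,
\[
X'/G_q=(X'/H_q)/K,
\]
so the image of $q$ is at worst a cyclic Hirzebruch--Jung singularity.
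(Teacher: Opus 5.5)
Your proof follows the paper's argument essentially step for step. You linearize, take a $G_q$-stable complement to the line $\ell$, and read off the diagonal action $(u,v)\mapsto(\beta u,\alpha\beta^{-1}v)$ on the blow-up chart. You then write $H_q$ as the product of the two kernels, observe that the quotient by $H_q$ is smooth, and show that $G_q/H_q$ embeds in $\mathbb C^*$ through its action on $U$.

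There is one slip in the key step: the subgroups are swapped. Since $H_v$ acts trivially on $u$, multiplying $g$ by an element of $H_v$ cannot change its action on $u$. You need $h\in H_u$ instead. Its $u$-character is faithful of order $a$, so it takes every $a$-th root of unity. Then $gh^{-1}$ acts trivially on $u$, and so lies in $H_v\subset H_q$.

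With this correction, the step directly shows that $gH_q\mapsto(\text{action on }U)$ is injective, which is exactly how the paper concludes. The Galois-theoretic argument for faithfulness on the $(U,V)$-plane is therefore unnecessary. The symmetric argument in $V$ gives $\gcd(n,q')=1$.
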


\begin{proof}
The point $q$ corresponds to a line $L_q\subset T_{X,p}$ preserved by $G_q$. Since $G_q$ is finite, its complex representation on $T_{X,p}$ is semisimple. There is therefore a $G_q$-invariant complementary line $L'_q$ such that
\[
    T_{X,p}=L'_q\oplus L_q.
\]
After equivariant local linearization, choose linear coordinates $(z_1,z_2)$ with $L_q=\{z_1=0\}$ and $L'_q=\{z_2=0\}$. Then
\[
    g\cdot z_1=\mu(g)z_1,
    \qquad
    g\cdot z_2=\lambda(g)z_2
\]
for characters $\lambda,\mu\colon G_q\to\mathbb C^*$ on coordinate functions.

On the blow-up chart containing $q$, put
\[
    u=z_2,
    \qquad
    v=\frac{z_1}{z_2}.
\]
Then $E=\{u=0\}$ and
\begin{equation}
\label{eq:diag-matrix-coords}
    g\cdot(u,v)
    =\bigl(\lambda(g)u,\mu(g)\lambda(g)^{-1}v\bigr).
\end{equation}
This proves the first assertion. The action on this chart is faithful, so $G_q$ is a finite subgroup of the diagonal torus $(\mathbb C^*)^2$ and is abelian.

Now suppose that $G_p\subset\operatorname{SL}_2(\mathbb C)$. The determinant of the action on coordinate functions is also one, so
\[
    \mu(g)=\lambda(g)^{-1}.
\]
Moreover, the character $g\mapsto\lambda(g)$ is injective on $G_q$: if $\lambda(g)=1$, then also $\mu(g)=1$, so $g$ acts trivially on $T_{X,p}$ and hence is the identity in the effective finite action. Thus $G_q$ embeds in $\mathbb C^*$ and is cyclic.

For the final assertion, let $H_1$ and $H_2$ be the kernels of the characters of $G_q$ on $v$ and on $u$, respectively. By \eqref{eq:diag-matrix-coords}, $H_1$ acts only on $u$ and fixes $E$ pointwise, while $H_2$ acts only on $v$. The nonidentity elements of $H_1\cup H_2$ are exactly the quasi-reflections, so $H_q=H_1H_2$. Effectiveness gives $H_1\cap H_2=\{1\}$, and $H_q$ is normal because $G_q$ is abelian. With $r_i:=|H_i|$, the quotient by $H_q$ is smooth, with coordinates $u^{r_1}$ and $v^{r_2}$. If $g\in G_q$ acts trivially on $u^{r_1}$, then $\lambda(g)$ is an $r_1$-th root of unity. Since $\lambda|_{H_1}$ is faithful, $\lambda(g)=\lambda(h)$ for some $h\in H_1$. Then $gh^{-1}\in H_2$, so $g\in H_q$. Thus $G_q/H_q$ acts faithfully on $u^{r_1}$, embeds in $\mathbb C^*$, and is cyclic. Its quotient is a cyclic Hirzebruch--Jung singularity.
\end{proof}

\subsection{Binary polyhedral isotropy subgroups}

Assume from now on that
\[
    G_p\subset\operatorname{SL}_2(\mathbb C)
\]
is a non-cyclic binary polyhedral group. The original quotient has a rational double point of type $D$ or $E$. A nonidentity element of this finite group has no eigenvalue $1$, so the action is free off $p$ near $p$. The exceptional configuration therefore gives the full local correction.

Every binary polyhedral group contains the central subgroup
\[
    H:=\{\pm I\}.
\]
This is the subgroup $H_E$ of Section~\ref{sec:general-hj}, acting trivially on $E$. On the blow-up, $-I$ acts by multiplication by $-1$ in the normal direction. It is therefore a quasi-reflection with fixed divisor $E$.

The effective action on $E$ is the action of
\[
    \overline G_p:=G_p/H\subset\operatorname{PGL}_2(\mathbb C).
\]
This is a dihedral, tetrahedral, octahedral, or icosahedral rotation group. We factor the quotient as
\begin{equation}
\label{eq:two-step-quotient}
\begin{tikzcd}[column sep=large]
    X'
    \arrow[r,"\pi_H"]
    \arrow[rr,bend left=22,"\pi'"]
    & Z:=X'/H
    \arrow[r,"\overline\pi"]
    & Y':=X'/G_p.
\end{tikzcd}
\end{equation}

Near a point of $E$, choose coordinates as in Lemma~\ref{lem:diag-formal}. The involution acts by
\[
    (u,v)\longmapsto(-u,v).
\]
Thus $Z$ is smooth near the image of $E$, with coordinates
\[
    w=u^2,
    \qquad v,
\]
and the image $E_Z$ of $E$ is $\{w=0\}$.

Let $q\in E$ have isotropy subgroup $G_q\cong\mathbb Z_{2k}$. Choose the generator $g$ so that its action on coordinate functions is
\begin{equation}
\label{eq:cyclic-action-before-reflection-quotient}
    g\cdot u=\zeta_{2k}u,
    \qquad
    g\cdot v=\zeta_{2k}^{-2}v.
\end{equation}
The central involution is $g^k$. On $Z$, the residual group $G_q/H\cong\mathbb Z_k$ is generated by the image $\overline g$ of $g$ and acts by
\[
    \overline g\cdot(w,v)
    =\bigl(\zeta_k w,\zeta_k^{-1}v\bigr).
\]
Hence the image of $q$ in $Y'$ has type $A_{k-1}$. For $k=1$, this is a smooth point.

Equivalently,
\[
    \mathbb C[u,v]^{G_q}
    =\mathbb C[u^{2k},v^k,u^2v]
    \cong
    \frac{\mathbb C[a,b,c]}{(ab-c^k)}.
\]
Here $a=u^{2k}$, $b=v^k$, and $c=u^2v$. The quotient map also has divisorial ramification along the image of $E$.

\subsection{The three special orbits}

A non-cyclic finite subgroup of $\operatorname{PGL}_2(\mathbb C)$ has exactly three orbits on $\mathbb P^1$ with nontrivial isotropy. For a binary polyhedral group, the inverse images of the corresponding isotropy subgroups in $G_p$ are cyclic of even order. These are the orbits on $E$ with isotropy subgroup larger than $H$.

\begin{table}[htbp]
\centering
\caption{Special orbits on $E=\mathbb P(T_{X,p})$.}
\label{tab:exceptional-orbits}
\renewcommand{\arraystretch}{1.2}
\small
\begin{tabular}{c|c|c|c|c}
\hline
Type & $G_p$ & $\overline G_p$ & Orbit sizes & Isotropy subgroups in $G_p$ \\
\hline
$D_{n+2}$
& $\operatorname{Dic}_n$
& $D_n$
& $2,\ n,\ n$
& $\mathbb Z_{2n},\ \mathbb Z_4,\ \mathbb Z_4$ \\
$E_6$
& $2\mathbf T$
& $A_4$
& $4,\ 6,\ 4$
& $\mathbb Z_6,\ \mathbb Z_4,\ \mathbb Z_6$ \\
$E_7$
& $2\mathbf O$
& $S_4$
& $6,\ 12,\ 8$
& $\mathbb Z_8,\ \mathbb Z_4,\ \mathbb Z_6$ \\
$E_8$
& $2\mathbf I$
& $A_5$
& $12,\ 30,\ 20$
& $\mathbb Z_{10},\ \mathbb Z_4,\ \mathbb Z_6$ \\
\hline
\end{tabular}
\end{table}

Here $D_n$ has order $2n$ and $\operatorname{Dic}_n$ has order $4n$, with $n\geq2$.

Suppose that the three isotropy orders are $2k_1,2k_2,2k_3$. On the minimal resolution $S\to Y'$, the corresponding quotient points give three chains of lengths $k_1-1$, $k_2-1$, and $k_3-1$.
Each chain meets the strict transform $T$ of the image of $E$ at one end.

The curve $T$ has self-intersection $-2$. Indeed, $\pi_H$ is a double cover branched along $E$, so $E_Z^2=-2$.
The map $Z\to Y'$ has degree $|\overline G_p|$ and is generically unramified along $E_Z$, so the image of $E_Z$ on the singular quotient $Y'$ has rational self-intersection $-2/|\overline G_p|$. Resolving an $A_{k_s-1}$ point through which this curve passes decreases its self-intersection by $(k_s-1)/k_s$. Riemann--Hurwitz for the cover $E\to E/\overline G_p\cong\mathbb P^1$ gives
\[
    \frac1{k_1}+\frac1{k_2}+\frac1{k_3}
    =1+\frac2{|\overline G_p|}.
\]
Therefore
\[
\begin{aligned}
    T^2
    &=-\frac2{|\overline G_p|}
      -\sum_{s=1}^3\frac{k_s-1}{k_s}\\
    &=-2.
\end{aligned}
\]

The final exceptional configuration is the corresponding ADE tree. In the order of Table~\ref{tab:exceptional-orbits}, the arm lengths are
\[
\begin{array}{c|c}
\text{type} & \text{three arm lengths} \\
\hline
D_{n+2} & n-1,1,1 \\
E_6 & 2,1,2 \\
E_7 & 3,1,2 \\
E_8 & 4,1,2.
\end{array}
\]
Every component is a smooth rational curve of self-intersection $-2$. Thus $S$ is also the minimal resolution of the rational double point $X/G_p$: the morphism $Y'\to X/G_p$ contracts the image of $E$, and $S$ contains no $(-1)$-curves. The pre-quotient blow-up therefore gives a second extension of the multiplicity sheaves to the same surface considered in Section~\ref{sec:obstruction}. Unlike the extension there, it is logarithmic by Proposition~\ref{prop:general-hj-natural-residues}, and it has the same Euler characteristic by \eqref{eq:blowup-birational-invariance}.

The following schematic records the geometry used in the residue calculation.

\begin{figure}[htbp]
\centering
\begin{tikzpicture}[
    scale=0.95,
    vertex/.style={circle,fill=black,inner sep=1.8pt},
    every node/.style={font=\small}
]
    \node[vertex,label=below:{$T$}] (T) at (0,0) {};

    \node[vertex] (A1) at (-1.4,0) {};
    \node (Adots) at (-2.5,0) {$\cdots$};
    \node[vertex] (Aend) at (-3.7,0) {};
    \draw (T)--(A1)--(Adots)--(Aend);
    \node[above=3mm of Adots] {$k_1-1$ curves};

    \node[vertex] (B1) at (1.4,0) {};
    \node (Bdots) at (2.5,0) {$\cdots$};
    \node[vertex] (Bend) at (3.7,0) {};
    \draw (T)--(B1)--(Bdots)--(Bend);
    \node[above=3mm of Bdots] {$k_2-1$ curves};

    \node[vertex] (C1) at (0,1.2) {};
    \node (Cdots) at (0,2.15) {$\vdots$};
    \node[vertex] (Cend) at (0,3.05) {};
    \draw (T)--(C1)--(Cdots)--(Cend);
    \node[right=3mm of Cdots] {$k_3-1$ curves};
\end{tikzpicture}
\caption{The star-shaped exceptional divisor obtained from the three special isotropy orbits. The arms are schematic. An arm of length one consists only of the curve next to $T$.}
\label{fig:ADE-star-resolution}
\end{figure}

\subsection{The cyclic arm calculation}
\label{subsec:cyclic-arm}

We now compute the contribution of one cyclic arm. Let
\[
    C_k=G_q=\langle g\rangle\cong\mathbb Z_{2k},
    \qquad
    \lambda(g)=\zeta_{2k},\qquad k\geq2,
\]
where $\lambda$ is the character on the normal coordinate function $u$. Thus
\[
    g\cdot u=\lambda(g)u,
    \qquad g\cdot v=\lambda(g)^{-2}v.
\]
The central subgroup $H=\langle g^k\rangle\cong\mathbb Z_2$ acts as a reflection along $E=\{u=0\}$.

After resolving the $A_{k-1}$ point, let $F_1,\ldots,F_{k-1}$ be the exceptional chain, indexed so that $F_1$ meets the strict transform $T$ of the reflection curve. The natural extension of the local direct-image sheaf splits into line bundles indexed by $\operatorname{Irr}(C_k)=\{1,\lambda,\ldots,\lambda^{2k-1}\}$.
These extensions are logarithmic, as proved generally in
Proposition~\ref{prop:general-hj-natural-residues}. For the character
$\lambda^m$, write $t_m$ for the residue along $T$ and
$R_i^{(m)}$ for the residue along $F_i$. These are
$t_{x,\lambda^m}$ and $R_{i,\lambda^m}$ in Section~\ref{sec:general-hj}.

For $0\leq r<k$, the module for $\lambda^{2r}$ is generated by $u^{2r}$ and $v^{k-r}$ over the invariant ring.
The module for $\lambda^{2r+1}$ is $u$ times this module. In the coordinates
$(w,v)=(u^2,v)$, the exceptional rays are
$((k-i)/k,i/k)$, ordered from $T$. Taking the minimum valuations of
these generators gives the following residues.

For $0\le r\le k-1$ and $1\le i\le k-1$, the residues for the even character $\lambda^{2r}$ are
\begin{equation}
\label{eq:even-arm-residues}
t_{2r}=0,
\qquad
R_i^{(2r)}=
\begin{cases}
\displaystyle \frac{(k-r)i}{k},
&1\le i\le r,\\[0.7em]
\displaystyle \frac{r(k-i)}{k},
&r\le i\le k-1.
\end{cases}
\end{equation}
The two expressions agree at $i=r$. As usual, an empty range is omitted.

For the odd character $\lambda^{2r+1}$, the residues are
\begin{equation}
\label{eq:odd-arm-residues}
t_{2r+1}=\frac12,
\qquad
R_i^{(2r+1)}=
\begin{cases}
\displaystyle
\frac{(k-r)i}{k}+\frac{k-i}{2k},
&1\le i\le r,\\[0.8em]
\displaystyle
\frac{(2r+1)(k-i)}{2k},
&r\le i\le k-1.
\end{cases}
\end{equation}
Again the two expressions agree at $i=r$.

We use the sign convention of equation~\eqref{eq:master-err-intro}: a local correction coefficient is the negative of the corresponding contribution to the global Euler characteristic. The arm contribution includes
\begin{itemize}
    \item the self-intersections of $F_1,\ldots,F_{k-1}$,
    \item the intersections of consecutive components of the chain, and
    \item the intersection $F_1\cdot T=1$.
\end{itemize}
It does not include the self-intersection of $T$.

\begin{proposition}[Cyclic arm correction]
\label{prop:cyclic-arm-correction}
With the notation above, define
\begin{equation}
\label{eq:cyclic-arm-class}
\begin{aligned}
    \alpha_k
    :={}&
    \sum_{r=0}^{k-1}
      \frac{r(k-r)}{2k}\,\lambda^{2r}\\
    &+
    \sum_{r=0}^{k-1}
      \left(
        \frac{r(k-r)}{2k}
        +\frac{1-k}{8k}
      \right)\lambda^{2r+1}
    \in
    \operatorname{Rep}(C_k)\otimes_{\mathbb Z}\mathbb Q.
\end{aligned}
\end{equation}
Then $\alpha_k$ is the correction contributed by the $A_{k-1}$ arm and its intersection with $T$, with the self-intersection of $T$ omitted.
\end{proposition}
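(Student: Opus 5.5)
The plan is to apply the residue expression for $m_\xi(\mathcal O_X)$ from Theorem~\ref{thm:intro-global} to the curves of the arm, one local character $\lambda^m$ at a time. I keep only the terms listed before the statement: the self-intersections of $F_1,\ldots,F_{k-1}$, the consecutive intersections $F_i\cdot F_{i+1}$, and the single crossing $F_1\cdot T$. The $T^2$ and $K_S\cdot T$ terms are omitted.

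\emph{Simplify the local sum.} Every $F_i$ is a $(-2)$-curve, so $K_S\cdot F_i=0$ and $F_i^2=-2$. The natural extension splits into line bundles, which are logarithmic by Proposition~\ref{prop:general-hj-natural-residues}, so all residues are scalars. The coefficient of $\lambda^m$ is therefore
\[
    \sum_{i=1}^{k-1}\bigl(R_i^{(m)}\bigr)^2
    -\sum_{i=1}^{k-2}R_i^{(m)}R_{i+1}^{(m)}
    -t_mR_1^{(m)}.
\]

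\emph{Use the second-difference trick.} As in the proof of Proposition~\ref{prop:An-local-correction}, set $R_0^{(m)}:=t_m$ and $R_k^{(m)}:=0$. A direct regrouping gives
\[
    \sum_{i=1}^{k-1}R_i\bigl(2R_i-R_{i-1}-R_{i+1}\bigr)
    =2\sum_{i=1}^{k-1} R_i^2-2\sum_{i=1}^{k-2}R_iR_{i+1}-R_0R_1 .
\]
Hence the coefficient equals
\[
    \frac12\sum_{i=1}^{k-1}R_i\bigl(2R_i-R_{i-1}-R_{i+1}\bigr)-\frac12 t_mR_1 .
\]

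\emph{Even characters $\lambda^{2r}$.} Here $t_{2r}=0$, and \eqref{eq:even-arm-residues} is exactly the $A_{k-1}$ profile \eqref{eq:An-residues} with $N=k$ and $j=r$. So the second difference is $1$ at $i=r$ and $0$ elsewhere, and the coefficient is
\[
    \tfrac12R_r^{(2r)}=\frac{r(k-r)}{2k}.
\]
The value is $0$ when $r=0$.

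\emph{Odd characters $\lambda^{2r+1}$.} First check that the extended values fit the piecewise-linear formula \eqref{eq:odd-arm-residues}.
\begin{itemize}
    \item At $i=0$ the formula gives $1/2=t_{2r+1}$. For $r\ge1$ this comes from the first branch; for $r=0$ it comes from the second.
    \item At $i=k$ the second branch gives $0$.
\end{itemize}
The slopes of the two branches are $\frac{k-r}{k}-\frac1{2k}$ and $-\frac{2r+1}{2k}$, so the kink at $i=r$ has second difference exactly $1$.

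For $r\ge1$ the coefficient is therefore $\frac12R_r-\frac14R_1$, with
\[
    R_r=\frac{(2r+1)(k-r)}{2k},\qquad R_1=\frac{3k-2r-1}{2k}.
\]
This simplifies to
\[
    \frac{4r(k-r)-k+1}{8k}=\frac{r(k-r)}{2k}+\frac{1-k}{8k}.
\]
For $r=0$ the profile is linear on $0\le i\le k$, so the second differences vanish. Only $-\frac14R_1=-\frac{k-1}{8k}$ survives, which agrees with the same formula.

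\emph{Main obstacle.} The arithmetic above is routine. The step needing care is bookkeeping: confirming that the extended values $R_0=t$ and $R_k=0$ are consistent with the residue formulas at both ends, including the degenerate case $r=0$ where the kink lies outside the range. One must also confirm that the sign convention of \eqref{eq:master-err-intro} and the exclusion of $T^2$ and $K_S\cdot T$ match the term list fixed before the statement.
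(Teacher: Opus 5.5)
Your proof is correct and follows the paper's argument step for step. It uses the same local sum over the arm, the same extension $R_0^{(m)}=t_m$, $R_k^{(m)}=0$ with the second-difference regrouping, and the same reduction to $\frac12R_r^{(m)}-\frac12t_mR_1^{(m)}$, treated separately for even characters, odd characters with $r\ge1$, and $r=0$. Your explicit checks of the boundary values and of the unit slope drop at $i=r$ fill in details the paper only asserts, and your arithmetic, for example $R_1^{(2r+1)}=(3k-2r-1)/(2k)$, is right.
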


\begin{proof}
Fix a character $\lambda^m$. Since all these curves are $(-2)$-curves, their canonical intersections vanish. The terms assigned to the arm in the global residue formula give
\[
    m_{q,\lambda^m}
    =\sum_{i=1}^{k-1}\bigl(R_i^{(m)}\bigr)^2
     -\sum_{i=1}^{k-2}R_i^{(m)}R_{i+1}^{(m)}
     -t_mR_1^{(m)}.
\]

Put $R_0^{(m)}=t_m$ and $R_k^{(m)}=0$. Formulas \eqref{eq:even-arm-residues} and \eqref{eq:odd-arm-residues} extend to $i=0$ and $i=k$ with these values. Rearranging the sum gives
\[
    m_{q,\lambda^m}
    =\frac12\sum_{i=1}^{k-1}R_i^{(m)}
       \bigl(2R_i^{(m)}-R_{i-1}^{(m)}-R_{i+1}^{(m)}\bigr)
     -\frac12t_mR_1^{(m)}.
\]
Write $m=2r$ or $m=2r+1$. The function $i\mapsto R_i^{(m)}$ is linear on $0\leq i\leq r$ and on $r\leq i\leq k$, and its slope drops by $1$ at $i=r$. Hence the bracket equals $1$ at $i=r$ and vanishes for the other $1\leq i\leq k-1$, so
\[
    m_{q,\lambda^m}=\frac12R_r^{(m)}-\frac12t_mR_1^{(m)},
\]
where the first term is absent when $r=0$.

For $m=2r$, one has $t_m=0$, and therefore $m_{q,\lambda^{2r}}=r(k-r)/(2k)$.

For $m=2r+1$ with $r\geq1$, one has $t_m=1/2$,
$R_r^{(m)}=r(k-r)/k+(k-r)/(2k)$, and
$R_1^{(m)}=(k-r)/k+(k-1)/(2k)$. Hence
\[
    m_{q,\lambda^{2r+1}}
    =\frac{r(k-r)}{2k}
     +\frac{1-k}{8k}.
\]
For $r=0$, one has $m_{q,\lambda}=-\frac14R_1^{(1)}=(1-k)/(8k)$, which is the same formula.
Summing these coefficients over the characters of $C_k$ proves the formula.
\end{proof}

\begin{remark}[Including the central self-intersection]
\label{rem:full-cyclic-correction}
Let $\varepsilon$ be the nontrivial character of $H=\{\pm I\}\cong\mathbb Z_2$. If the self-intersection $T^2=-2$ is also included, the cyclic correction becomes
\begin{equation}
\label{eq:full-cyclic-correction}
    \beta_k
    =\alpha_k
     +\frac14\operatorname{Ind}_{H}^{C_k}(\varepsilon).
\end{equation}
Indeed, $\operatorname{Ind}_{H}^{C_k}(\varepsilon)=\sum_{r=0}^{k-1}\lambda^{2r+1}$, so $\beta_k$ has the same even coefficients as $\alpha_k$, and its coefficient of $\lambda^{2r+1}$ is
\begin{equation}
\label{eq:full-cyclic-character-form}
    \frac{r(k-r)}{2k}+\frac{k+1}{8k}.
\end{equation}
When several arms meet the same curve $T$, its self-intersection must be counted only once.
\end{remark}

\subsection{Assembly of the ADE correction}

The self-intersection of the central curve contributes
\begin{equation}
\label{eq:central-reflection-correction}
    \sigma_p
    :=\frac14
      \operatorname{Ind}_{H}^{G_p}(\varepsilon)
    \in
    \operatorname{Rep}(G_p)\otimes_{\mathbb Z}\mathbb Q.
\end{equation}
To see this, note that the residue along $T$ is $0$ on the trivial character of $H$ and $1/2$ on $\varepsilon$. Since $T^2=-2$ and $K_S\cdot T=0$, the term $-\frac12\bigl(t^2T^2+tK_S\cdot T\bigr)$ equals $\frac14$ on $\varepsilon$ and $0$ on the trivial character. Frobenius reciprocity, as in the proof of Theorem~\ref{thm:blowup-assembly}, gives $\sigma_p$.

Let $\Omega_1,\Omega_2,\Omega_3\subset E$ be the three special orbits, and choose representatives
\[
    q_s\in\Omega_s,
    \qquad
    C_s:=G_{q_s}\cong\mathbb Z_{2k_s},
    \qquad s=1,2,3.
\] The action on the normal coordinate function determines a character $\lambda_s\colon C_s\to\mathbb C^*$, given by $g\cdot u=\lambda_s(g)u$. The pair $(C_s,\lambda_s)$ is part of the local data. Two different special orbits may have the same underlying isotropy subgroup but different normal characters.

Let $\alpha_{k_s,q_s}\in\operatorname{Rep}(C_s)\otimes_{\mathbb Z}\mathbb Q$ be the class of Proposition~\ref{prop:cyclic-arm-correction}, formed using $\lambda_s$.

By the local correction class function at $p$ we mean the class of the terms of the global residue formula on the resolution of $X'/G_p$ that involve the exceptional configuration over $p$. Since the action is free off $p$ near $p$, this is the full contribution of the orbit of $p$. By Lemma~\ref{lem:general-hj-model-independence}, it does not depend on the admissible model used to compute it. In Section~\ref{sec:applications} it is compared with the holomorphic Lefschetz class.

\begin{theorem}[ADE assembly formula]
\label{thm:blowup-assembly}
The local correction class function at $p$ is
\begin{equation}
\label{eq:rho-three-orbit-sum}
    \rho_p
    =\sigma_p
     +\sum_{s=1}^3
       \operatorname{Ind}_{C_s}^{G_p}
       \bigl(\alpha_{k_s,q_s}\bigr)
    \in
    \operatorname{Rep}(G_p)\otimes_{\mathbb Z}\mathbb Q.
\end{equation}
For every $\xi\in\operatorname{Irr}(G)$, the contribution of the orbit of $p$ is
\begin{equation}
\label{eq:multiplicity-pairing}
    m_{p,\xi}
    =\left\langle
       \rho_p,
       \operatorname{Res}_{G_p}^{G}\xi
     \right\rangle_{G_p}.
\end{equation}
The class $\rho_p$ is independent of the chosen representatives $q_s$.
\end{theorem}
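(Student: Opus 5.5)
We plan to compute the local correction by applying the residue formula \eqref{eq:prelim-residue-euler} on the minimal resolution $S\to Y'=X'/G_p$. We then separate the terms by support and transport each piece to $G_p$ by Frobenius reciprocity. First fix $\xi\in\operatorname{Irr}(G)$. Near $p$, the multiplicity sheaf $\mathcal F_\xi$ on $X'/G_p$ is $\operatorname{Hom}_{G_p}(W,\pi'_*\mathcal O_{X'})$ with $W=\operatorname{Res}^G_{G_p}W_\xi$. Decomposing $W$ into $G_p$-irreducibles reduces everything to the local multiplicity bundles for $\eta\in\operatorname{Irr}(G_p)$, counted with multiplicity $\langle\eta,\operatorname{Res}\xi\rangle_{G_p}$. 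This gives \eqref{eq:multiplicity-pairing} once we know $\rho_p=\sum_\eta m_{p,\eta}\eta$. The natural extensions are logarithmic along the star-shaped configuration of Figure~\ref{fig:ADE-star-resolution} by Proposition~\ref{prop:general-hj-natural-residues}. Hence the local correction is the sum of the residue terms on $T$ and the arm curves, and every $K_S$-term vanishes because all components are $(-2)$-curves.

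Second, we split these terms into three groups: the self-intersection term of $T$; for each special orbit $\Omega_s$, the terms on the arms over points of $\Omega_s$, including the intersections with $T$; and nothing else, since distinct arms are disjoint. The residues are local. Near the image of $q\in\Omega_s$, the map $X'\to Y'$ factors through $X'/C_s$. The standard induction identity $\pi'_*\mathcal O_{X'}\cong\operatorname{Ind}_{C_s}^{G_p}$ (local pushforward at $q$), valid over a neighborhood of the orbit $\Omega_s$, identifies the $\eta$-multiplicity bundle near the image of $\Omega_s$ with the $C_s$-local bundles. The multiplicity of the $C_s$-character $\lambda_s^m$ is $\langle\lambda_s^m,\operatorname{Res}^{G_p}_{C_s}\eta\rangle_{C_s}$, and the residues are the simultaneously diagonal values $R_i^{(m)}$, $t_m$ of \eqref{eq:even-arm-residues} and \eqref{eq:odd-arm-residues}. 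Traces of products of residues then become sums over $C_s$-characters with these multiplicities. Summing over $\eta$ and applying Frobenius reciprocity turns the arm contribution of $\Omega_s$ into $\operatorname{Ind}_{C_s}^{G_p}\alpha_{k_s,q_s}$, by Proposition~\ref{prop:cyclic-arm-correction}. We must check one point carefully: each orbit contributes exactly one arm on $S$, namely the image of $q_s$, and not $|\Omega_s|$ copies. The induction formula accounts for this, because $\operatorname{Ind}$ records the local structure at a single point of the quotient.

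Third, the $T$ self-intersection term is handled the same way with $H=\{\pm I\}$, which is the generic isotropy along $E$. The residue along $T$ is $0$ or $1/2$ according to the $H$-character, so the term equals $\frac14$ times the $\varepsilon$-multiplicity of $\operatorname{Res}_H\eta$. Frobenius reciprocity gives $\sigma_p$, which is counted once as in Remark~\ref{rem:full-cyclic-correction}. Lemma~\ref{lem:general-hj-model-independence} identifies this sum with the model-independent local class.

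Finally, independence of the representatives holds because replacing $q_s$ by $gq_s$ conjugates $(C_s,\lambda_s)$ by $g$, and the conjugated $\alpha$ has the same induced class. I expect the main obstacle to be the second step: making rigorous the local identification of the multiplicity bundle near an arm with the induced $C_s$-data, including the consistency of the residue along $T$ at the junction point with the global value. The check that $t_m$ depends only on $\operatorname{Res}_H\lambda_s^m$ handles this consistency.
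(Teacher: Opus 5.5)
Your proposal is correct and follows essentially the same route as the paper. You split the residue expression on the star-shaped configuration into the central term $\sigma_p$ and one arm per special orbit, compute each arm at a single representative and induce from $C_s$ to $G_p$, and then use Frobenius reciprocity with the rank-$\xi(1)$ multiplicity sheaf, plus conjugation invariance for independence of the $q_s$; your detour through $\eta\in\operatorname{Irr}(G_p)$ is only transitivity of restriction, whereas the paper pairs directly with $\operatorname{Res}^G_{C_s}\xi$.
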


\begin{proof}
By equivariant birational invariance, blowing up the orbit of $p$ does not change $\chi_G(X,\mathcal O_X)$. On the resolution of $X'/G_p$, the exceptional divisor consists of the central curve $T$ and three cyclic chains. Distinct chains do not meet, and each chain meets only $T$. The residue expression therefore splits into the self-intersection contribution of $T$ and the three arm contributions.

The central term is $\sigma_p$. For a fixed orbit $\Omega_s$, the contribution from all of its points is obtained from the calculation at one representative by induction from $C_s$ to $G_p$. Its class is therefore
\[
    \operatorname{Ind}_{C_s}^{G_p}
    \bigl(\alpha_{k_s,q_s}\bigr).
\]
Adding the central term and the three induced arm terms gives \eqref{eq:rho-three-orbit-sum}.

If another representative of $\Omega_s$ is chosen, its isotropy subgroup and normal character are conjugate to those at $q_s$. Induction from conjugate data gives the same class function on $G_p$. Hence $\rho_p$ is independent of the representatives.

To check the character multiplicities, use the rank-$\xi(1)$ multiplicity
sheaf $\mathcal F_\xi$ of Section~\ref{sec:preliminaries} for the global
quotient. Near the image of $q_s$, its natural extension splits into the
local line bundles. The summand for $\lambda_s^m$ occurs with multiplicity
$\langle\operatorname{Res}_{C_s}^G\xi,\lambda_s^m\rangle_{C_s}$.
Frobenius reciprocity gives
\[
\begin{aligned}
    \left\langle
      \operatorname{Ind}_{C_s}^{G_p}
      \alpha_{k_s,q_s},
      \operatorname{Res}_{G_p}^{G}\xi
    \right\rangle_{G_p}
    &=
    \left\langle
      \alpha_{k_s,q_s},
      \operatorname{Res}_{C_s}^{G}\xi
    \right\rangle_{C_s}.
\end{aligned}
\]
This is the contribution of the orbit $\Omega_s$. There is no additional factor of $\xi(1)$, since we use the multiplicity sheaf. Summing the three orbit terms and the central term proves \eqref{eq:multiplicity-pairing}.
\end{proof}

\begin{remark}[Using the full cyclic classes]
If one uses the classes $\beta_{k_s}$ from \eqref{eq:full-cyclic-correction}, formed with the normal character $\lambda_s$ at each orbit, then transitivity of induction gives
\begin{equation}
\label{eq:assembly-full-cyclic}
    \rho_p
    =\sum_{s=1}^3
      \operatorname{Ind}_{C_s}^{G_p}(\beta_{k_s})
      -2\sigma_p.
\end{equation}
Each induced class contains one copy of the self-intersection contribution of the same central curve $T$. The sum counts that term three times, so two copies must be removed.
\end{remark}

\subsection{The ADE formulas}

The assembly theorem gives the four families uniformly. Each term uses
the pair $(C_s,\lambda_s)$ from its special orbit. We write $\alpha_{k_s}$
for $\alpha_{k_s,q_s}$ below, always using the corresponding character
$\lambda_s$. In the order of Table~\ref{tab:exceptional-orbits}, the
isotropy orders $2k_s$ give
\[
\renewcommand{\arraystretch}{1.15}
\begin{array}{c|cccc}
    \text{type} & D_{n+2} & E_6 & E_7 & E_8\\ \hline
    (k_1,k_2,k_3) & (n,2,2) & (3,2,3) & (4,2,3) & (5,2,3)
\end{array}
\]
and in each case
\begin{equation}
\label{eq:ADE-correction-class}
    \rho_p
    =\sigma_p
     +\operatorname{Ind}_{C_1}^{G_p}(\alpha_{k_1})
     +\operatorname{Ind}_{C_2}^{G_p}(\alpha_{k_2})
     +\operatorname{Ind}_{C_3}^{G_p}(\alpha_{k_3}).
\end{equation}
For instance, for $D_{n+2}$ with $G_p=\operatorname{Dic}_n$ and $n\geq2$, one has $C_1\cong\mathbb Z_{2n}$ and $C_2\cong C_3\cong\mathbb Z_4$, so the correction is built from $\alpha_n$ and two copies of $\alpha_2$. For $E_6$, the two arms of length two can be represented by the two eigenlines of one element of order six. They have the same isotropy subgroup but inverse normal characters.

These formulas reduce every non-cyclic ADE correction to explicit class functions on cyclic isotropy subgroups. No logarithmic modification of a non-abelian isotypic bundle is required.

%% file: ch-general-hj.tex
\ProvidesFile{ch-general-hj.tex}

\section{The General Hirzebruch--Jung Calculation}
\label{sec:general-hj}

Blowing up a fixed point replaces its isotropy subgroup by diagonal isotropy subgroups along
the exceptional curve. After removing the quasi-reflections, each local quotient
is cyclic. We now compute the residues for these diagonal actions and explain
how they enter the global Riemann--Roch formula.

The local action determines the exceptional chain, its residues, and its
intersections with the boundary curves. Boundary self-intersections and
canonical intersections come from the surrounding surface. We therefore
compute local contributions to a global Riemann--Roch formula, rather than
Euler characteristics on the local quotient germs.

\subsection{The diagonal action and its reflection subgroups}

Let $p\in X$ have isotropy subgroup $G_p$, and let
$f\colon (X',E)\to(X,p)$ be the blow-up at $p$, so that $E\cong\mathbb P^1$
and $E^2=-1$. Globally, we blow up the full orbit of $p$. Fix $q\in E$ with
isotropy subgroup $G_q$. By Lemma~\ref{lem:diag-formal}, the action of $G_q$
is diagonal in local coordinates $(x,y)$ with $E=\{x=0\}$. These are the
coordinates $(u,v)$ of Section~\ref{sec:blowup}. The local calculation also
applies directly to any faithful diagonal surface action, without a
preliminary blow-up.

We keep $q$ fixed and omit it from the characters and numerical data.
Characters label the action on coordinate functions,
$(g\cdot h)(z)=h(g^{-1}z)$, so they are the inverses of the corresponding
tangent characters. Write
\[
    g\cdot x=\alpha(g)x,
    \qquad
    g\cdot y=\beta(g)y,
\]
so that $x^u y^v$ has character $\alpha^u\beta^v$.

The faithful diagonal representation identifies $G_q$ with a finite
subgroup of $(\mathbb C^*)^2$, so $G_q$ is abelian. Its two reflection
subgroups $H_x:=\ker\beta$ and $H_y:=\ker\alpha$ fix $\{x=0\}$ and
$\{y=0\}$ pointwise, respectively. Set $r_x:=|H_x|$, $r_y:=|H_y|$, and
$H_q:=H_xH_y$. Effectiveness gives $H_x\cap H_y=\{1\}$, so
$H_q\cong H_x\times H_y$. This is the subgroup generated by all
quasi-reflections. The quotient by $H_q$ is smooth, as also follows from the
Chevalley--Shephard--Todd theorem
\cite{shephard_finite_1954,chevalley_invariants_1955}, with coordinates
\[
    X=x^{r_x},\qquad Y=y^{r_y}.
\]

The residual group $C_q:=G_q/H_q$ acts faithfully on each coordinate.
Indeed, $\alpha|_{H_x}$ is faithful of order $r_x$. If
$\alpha(g)^{r_x}=1$, choose $h\in H_x$ with $\alpha(h)=\alpha(g)$.
Then $gh^{-1}\in H_y$, so $g\in H_q$.
The same argument applies to $Y$. Hence $C_q$ is cyclic and has no
quasi-reflections. Write $n:=|C_q|=|G_q|/(r_xr_y)$.

If $n>1$, choose its generator $\bar g$ so that
\begin{equation}
\label{eq:general-hj-residual-action}
    \bar g\cdot X=\zeta_nX,
    \qquad
    \bar g\cdot Y=\zeta_n^aY,
    \qquad
    1\leq a<n,\quad \gcd(a,n)=1,
\end{equation}
where $\zeta_n=\exp(2\pi i/n)$. Thus $X$ remains the coordinate
vanishing on the image of $E$, and the quotient has type
$\frac1n(1,a)$.
If $n=1$, the quotient is smooth and there is no exceptional chain.

\subsection{The resolution and its orientation}

We use the standard toric description of Hirzebruch--Jung resolutions,
see \cite[\S\S~2--3]{reid_surface_nodate} and
\cite{cox_toric_2011}. We specify the orientation because it determines
which continued fraction occurs.

Suppose first that $n>1$, and let $a^{\vee}\in\{1,\ldots,n-1\}$ be the
inverse of $a$ modulo $n$. We order the chain from the boundary $X=0$ to the
boundary $Y=0$. With this order, write
\begin{equation}
\label{eq:general-hj-continued-fraction}
    \frac{n}{a^{\vee}}=[b_1,\ldots,b_\ell]
    :=b_1-\cfrac{1}{b_2-\cfrac{1}{\ddots-\cfrac{1}{b_\ell}}},
    \qquad b_i\geq2.
\end{equation}
The coefficients are obtained by the ceiling Euclidean algorithm: start
with $(u_0,u_1)=(n,a^{\vee})$, and set
$b_i=\lceil u_{i-1}/u_i\rceil$ and $u_{i+1}=b_i u_i-u_{i-1}$
until $u_{\ell+1}=0$. Ordering the chain from $Y=0$ instead gives
$n/a=[b_\ell,\ldots,b_1]$.

The quotient is toric, with lattice
$N=\mathbb Z^2+\frac1n(1,a)\mathbb Z$.
The primitive rays of its minimal resolution, in the stated order, are
\[
    v_0=(1,0),\qquad
    v_1=\left(\frac{a^{\vee}}n,\frac1n\right),\qquad\ldots,\qquad
    v_\ell=\left(\frac1n,\frac an\right),\qquad
    v_{\ell+1}=(0,1),
\]
with the intermediate rays determined by
\begin{equation}
\label{eq:general-hj-ray-recurrence}
    v_{i+1}=b_i v_i-v_{i-1}
    \qquad (1\leq i\leq\ell).
\end{equation}
Consecutive rays form a basis of $N$.

Let $Y_q$ be the quotient germ at the image of $q$, and let
$\mu_q\colon\widetilde Y_q\to Y_q$ be its minimal resolution. Denote the
exceptional curves by $F_1,\ldots,F_\ell$, and the strict transforms of the
coordinate boundaries by $B_x,B_y$. Then
\[
    F_i^2=-b_i,\qquad F_i\cdot F_{i+1}=1,
    \qquad B_x\cdot F_1=B_y\cdot F_\ell=1,
\]
and all other intersections between distinct members of this list vanish.
Put
\[
    J:=(F_i\cdot F_j)_{1\leq i,j\leq\ell},
    \qquad
    \boldsymbol\kappa:=(b_1-2,\ldots,b_\ell-2)^T.
\]
By adjunction \cite[Chapter~V, \S~1]{hartshorne_algebraic_1977}, a
canonical divisor $K$ on $\widetilde Y_q$ satisfies
\begin{equation}
\label{eq:general-hj-canonical-intersections}
    K\cdot F_i=b_i-2.
\end{equation}
These are degrees on the proper curves $F_i$. Boundary
self-intersections are not part of the local data.

When $n=1$, set $\ell=0$, take $v_0=(1,0)$ and $v_1=(0,1)$, and
omit $J$ and $\boldsymbol\kappa$. In this case $B_x$ and $B_y$
meet transversely at the smooth quotient point.

\subsection{The character modules}

For $\chi\in\operatorname{Irr}(G_q)$, let
$\mathcal M_\chi$ be the corresponding rank-one summand of the finite
direct image. In the linear local model it is generated by the monomials
\[
    x^u y^v,\qquad
    (u,v)\in S_\chi:=
    \{(u,v)\in\mathbb Z_{\geq0}^2:\alpha^u\beta^v=\chi\}.
\]
Since the coordinates of Lemma~\ref{lem:diag-formal} linearize the action,
the same description applies to the analytic germ.

Since $\alpha|_{H_x}$ and $\beta|_{H_y}$ generate the character groups of
the cyclic groups $H_x$ and $H_y$, there are unique integers
$0\leq d_x<r_x$ and $0\leq d_y<r_y$, depending on $\chi$, such that
\[
    \chi|_{H_x}=\alpha^{d_x}|_{H_x},
    \qquad
    \chi|_{H_y}=\beta^{d_y}|_{H_y}.
\]
If a reflection subgroup is trivial, the corresponding integer is zero.
The character $\chi\alpha^{-d_x}\beta^{-d_y}$
is trivial on $H_q$ and therefore descends to $C_q$.
For $n>1$, define $c_\chi\in\{0,\ldots,n-1\}$ by
\[
    (\chi\alpha^{-d_x}\beta^{-d_y})(\bar g)=\zeta_n^{c_\chi}.
\]
For $n=1$, put $c_\chi=0$.

When $n>1$, every monomial of character $\chi$ has the form
\[
    x^{d_x}y^{d_y}X^iY^j,
    \qquad i,j\geq0,
    \qquad i+aj\equiv c_\chi\pmod n.
\]
Let $[m]_n$ denote the representative of $m$ in
$\{0,\ldots,n-1\}$. Then $\mathcal M_\chi$ is generated over the
invariant ring by the finite list
\begin{equation}
\label{eq:general-hj-monomial-generators}
    s_{\chi,j}:=x^{d_x}y^{d_y}X^{[c_\chi-aj]_n}Y^j,
    \qquad 0\leq j<n.
\end{equation}
Indeed, $X^n$ and $Y^n$ are invariant, so the exponents can first be
reduced modulo $n$. The congruence then fixes the exponent of $X$.
For $n=1$, the module is free with generator $x^{d_x}y^{d_y}$.

\subsection{The natural extension and its residues}

The natural extension
$\mathcal L^{\mathrm{nat}}_\chi:=\mu_q^*\mathcal M_\chi/\operatorname{torsion}$
is a line bundle by
Proposition~\ref{extension_prop}, see also
\cite[Proposition~2.7]{gustavsen_deformations_2018}.

The lattice $N$ uses the coordinates $X,Y$, not $x,y$. Thus the
valuation of an original monomial is given by the toric
monomial--ray pairing \cite{cox_toric_2011}:
\begin{equation}
\label{eq:general-hj-monomial-valuation}
    \nu_i(x^u y^v)
    =\left\langle
        \left(\frac{u}{r_x},\frac{v}{r_y}\right),v_i
      \right\rangle.
\end{equation}
For a semi-invariant monomial this is a rational valuation: an invariant
power has an integral valuation, and we divide by the exponent.

\begin{proposition}[Residues of the natural extension]
\label{prop:general-hj-natural-residues}
The connection induced by $d$ on $\mathcal L^{\mathrm{nat}}_\chi$
is logarithmic along the exceptional chain and the two boundary curves.
Its boundary residues are
\[
    t_{x,\chi}=\frac{d_x}{r_x},\qquad
    t_{y,\chi}=\frac{d_y}{r_y}.
\]
For $1\leq i\leq\ell$, its exceptional residue is
\begin{equation}
\label{eq:general-hj-minimal-valuation}
    R_{i,\chi}
    =\min_{(u,v)\in S_\chi}
      \left\langle
        \left(\frac{u}{r_x},\frac{v}{r_y}\right),v_i
      \right\rangle.
\end{equation}
More explicitly, if $v_i=(v_i^{(1)},v_i^{(2)})$, then
\begin{equation}
\label{eq:general-hj-finite-residue-formula}
    R_{i,\chi}
    =t_{x,\chi}v_i^{(1)}
      +t_{y,\chi}v_i^{(2)}
     +\min_{0\leq j<n}
       \left([c_\chi-aj]_n v_i^{(1)}+jv_i^{(2)}\right).
\end{equation}
There are no exceptional residues when $n=1$.
\end{proposition}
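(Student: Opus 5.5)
The plan is to work torically and chart by chart on the minimal resolution $\widetilde Y_q$, using the fact that every character module is monomial. The resolution is covered by the affine toric charts $U_i$ attached to the cones $\sigma_i=\operatorname{cone}(v_i,v_{i+1})$, for $0\leq i\leq\ell$. Since consecutive rays form a basis of $N$, each $U_i$ is a copy of $\mathbb C^2$. Its coordinates $(A_i,B_i)$ are the monomials dual to $(v_i,v_{i+1})$, where $A_i=0$ is the divisor of $v_i$ and $B_i=0$ is the divisor of $v_{i+1}$. On $U_i$, every monomial $x^uy^v$ becomes, up to a root of the coordinates, a Laurent monomial $A_i^{\nu_i}B_i^{\nu_{i+1}}$ in rational exponents, with the exponents given by \eqref{eq:general-hj-monomial-valuation}. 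For a semi-invariant monomial, a suitable power is a genuine monomial in $A_i,B_i$. Dividing its exponents by the power shows that
\[
d\log(x^uy^v)=\nu_i(x^uy^v)\,d\log A_i+\nu_{i+1}(x^uy^v)\,d\log B_i,
\]
which generalizes \eqref{eq:An-dlog-inversion}.

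The first key step is to find a frame of $\mathcal L^{\mathrm{nat}}_\chi$ on $U_i$. The natural extension is the torsion-free pullback $\mu_q^*\mathcal M_\chi/\text{torsion}$. On $U_i$, it is the $\mathcal O(U_i)$-submodule of the function field generated by the generators $s_{\chi,j}$ of \eqref{eq:general-hj-monomial-generators}. Each generator is a unit times $A_i^{\nu_i(s)}B_i^{\nu_{i+1}(s)}$. The ratio of any two semi-invariant monomials of the same character is invariant, so it is a Laurent monomial in $A_i,B_i$ with integer exponents. Hence the exponents of all generators agree modulo $\mathbb Z$ in each variable. The module generated is then the one generated by a single monomial whose exponents are the minima over the generators; call it $e_i=A_i^{R_i}B_i^{R_{i+1}}$. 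It remains to check that this minimal monomial is actually attained by one generator, so that $e_i$ lies in the module. This is the main obstacle I expect. The line-bundle property from Proposition~\ref{extension_prop} forces the module to be principal, and a principal monomial ideal-like module must be generated by one of its monomial generators. So some $s_{\chi,j}$ simultaneously minimizes $\nu_i$ and $\nu_{i+1}$. I would try to verify this directly from local freeness rather than by combinatorics of the cone. The generators of the whole module $S_\chi$ and the finite list give the same minima, because multiplying by invariant monomials only increases valuations. This yields \eqref{eq:general-hj-minimal-valuation}.

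With this frame, $\nabla(e_i)=e_i\otimes(\nu_i(e_i)\,d\log A_i+\nu_{i+1}(e_i)\,d\log B_i)$. This follows from the $d\log$ identity above, applied to the generator $s$ that realizes $e_i$: we have $e_i=(\text{unit})\cdot s$, and the unit can be taken constant because $e_i$ and $s$ are equal monomials. The connection is therefore diagonal and logarithmic along $A_i=0$ and $B_i=0$. Its residues are the minimal valuations $R_{i,\chi}$ and $R_{i+1,\chi}$, and they are constant along each curve and compatible on overlaps. On the end charts, the boundary divisors $B_x,B_y$ have rays $v_0=(1,0)$ and $v_{\ell+1}=(0,1)$. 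For the boundary residue along $B_x$, every monomial in $S_\chi$ has $u\equiv d_x\pmod{r_x}$, so the minimum of $u/r_x$ is $d_x/r_x$, attained by $s_{\chi,0}$ up to factors. Thus $t_{x,\chi}=d_x/r_x$, and likewise $t_{y,\chi}=d_y/r_y$.

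Finally, I would derive the explicit formula \eqref{eq:general-hj-finite-residue-formula}. The pairing is linear, and
\[
s_{\chi,j}=x^{d_x}y^{d_y}X^{[c_\chi-aj]_n}Y^j \quad\text{with}\quad X=x^{r_x},\ Y=y^{r_y}.
\]
Its valuation is therefore
\[
\frac{d_x}{r_x}v_i^{(1)}+\frac{d_y}{r_y}v_i^{(2)}+[c_\chi-aj]_nv_i^{(1)}+jv_i^{(2)}.
\]
Minimizing over $0\leq j<n$ gives the stated formula. When $n=1$, the module is free with the generator $x^{d_x}y^{d_y}$. The resolution is the identity, there is no exceptional chain, and only the two boundary residues remain.
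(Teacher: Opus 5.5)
Your proposal follows the paper's proof essentially step for step. On each smooth toric chart, local freeness (Nakayama at the torus-fixed point) makes one monomial generator $s$ a frame. Its ratios to the other generators are invariant Laurent monomials regular at that point, so $s$ attains both minimal valuations, and $\nabla s=s\otimes(c_1\,d\log z_1+c_2\,d\log z_2)$ gives logarithmicity with those minima as residues. One small slip: the minimal $x$-exponent $d_x$ is attained not by $s_{\chi,0}$, which realizes the $y$-minimum $d_y$, but by $s_{\chi,j}$ with $aj\equiv c_\chi\pmod n$; such a $j$ exists because $\gcd(a,n)=1$.
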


\begin{proof}
Work on one smooth toric chart of $\widetilde Y_q$, with coordinates
$z_1,z_2$ and torus-fixed point $o$. There the natural extension is
generated by the images of the monomials
\eqref{eq:general-hj-monomial-generators}. Their images span its
one-dimensional fiber at $o$, so by Nakayama's lemma one of them, say $s$,
generates the stalk at $o$. Every other generator is $s$ times an invariant
Laurent monomial, that is, a Laurent monomial in $z_1,z_2$. Since this
monomial is regular at $o$, both of its exponents are nonnegative. Hence $s$
is a frame on the whole chart, and it attains the minimum valuation along
both adjacent rays simultaneously. This is where the local freeness of the
natural extension is used.

An invariant power of $s$ is a Laurent monomial in $z_1,z_2$. Hence
\[
    \nabla s
    =s\otimes\left(c_1\frac{dz_1}{z_1}
                         +c_2\frac{dz_2}{z_2}\right)
\]
for rational constants $c_1,c_2$. The connection is therefore logarithmic.
The two constants are the valuations of $s$ along the chart boundaries.
Since $s$ generates the natural extension, these valuations are the minima
among all its monomial generators. This proves
\eqref{eq:general-hj-minimal-valuation}, and the finite generating list gives
\eqref{eq:general-hj-finite-residue-formula}.

Along $B_x$ the minimum exponent of $x$ is $d_x$: for $n>1$, a choice
of $j$ makes $[c_\chi-aj]_n=0$. Along $B_y$ the minimum exponent of
$y$ is $d_y$, obtained with $j=0$. Equation
\eqref{eq:general-hj-monomial-valuation} gives the stated boundary residues.
The case $n=1$ follows directly from the frame $x^{d_x}y^{d_y}$.
\end{proof}

\subsection{From residues to correction terms}

Let $\mu\colon S\to Y'$ be the minimal resolution of the global quotient
after the blow-ups. Near the image of $q$, it is the minimal resolution
$\mu_q$ described above. The logarithmic divisor $\Delta=\sum_a\Delta_a$
consists of the exceptional curves of $\mu$ and the strict transforms of
the branch curves. In the setting of Theorem~\ref{thm:intro-global}, it is
SNC, see Section~\ref{subsec:general-hj-global-proof}.

For a rank-$r$ bundle $V$ with a flat logarithmic connection, let $R_a$ be
its residue along $\Delta_a$. Equation~\eqref{eq:prelim-residue-euler},
based on \cite{ohtsuki_residue_1982,esnault_logarithmic_1986}, gives
\begin{equation}
\label{eq:general-hj-global-residue-correction}
\begin{aligned}
    r\chi(\mathcal O_S)-\chi(S,V)
    &=-\frac12\sum_a
       \left(\operatorname{tr}(R_a^2)\Delta_a^2
             +\operatorname{tr}(R_a)K_S\cdot\Delta_a\right)\\
    &\quad-\sum_{a<b}
        \sum_{z\in\Delta_a\cap\Delta_b}
           \operatorname{tr}\bigl(R_a(z)R_b(z)\bigr).
\end{aligned}
\end{equation}
Each intersection point is counted once. The sign is the correction
convention of \eqref{eq:master-err-intro}. A local character module need not
extend to a global line bundle: its residues enter this formula through
the global multiplicity bundles of Section~\ref{sec:preliminaries}.

We now specify which terms are assigned to a local quotient point.
For $n>1$, include the self-intersections and canonical intersections of
its exceptional chain, the intersections inside the chain, and its
intersections with both boundary curves. Exclude the self-intersections and
canonical intersections of the boundary curves. For $n=1$, include only
the intersection $B_x\cap B_y$.

\begin{proposition}[Local contribution to the global residue formula]
\label{prop:general-hj-residue-formula}
For $\chi\in\operatorname{Irr}(G_q)$, let
$\mathbf R_\chi=(R_{1,\chi},\ldots,R_{\ell,\chi})^T$ be the
exceptional residue vector of the natural extension, and let
$t_{x,\chi},t_{y,\chi}$ be its boundary residues. With the allocation above,
the correction coefficient is
\begin{equation}
\label{eq:general-hj-universal-correction}
    m_\chi=
    \begin{cases}
    -\dfrac12\left(
       \mathbf R_\chi^T J\mathbf R_\chi
       +\boldsymbol\kappa^T\mathbf R_\chi\right)
       -t_{x,\chi}R_{1,\chi}
       -t_{y,\chi}R_{\ell,\chi}, & n>1,\\[8pt]
    -t_{x,\chi}t_{y,\chi}, & n=1.
    \end{cases}
\end{equation}
\end{proposition}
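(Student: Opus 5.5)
The formula is obtained by isolating, in the global residue formula \eqref{eq:general-hj-global-residue-correction}, exactly the terms that the allocation rule assigns to the quotient point of $q$. We then evaluate them for a single character line bundle, using the residues of Proposition~\ref{prop:general-hj-natural-residues}. The natural extension is logarithmic there, so the global multiplicity bundle $\mathcal V_\xi$ splits near the image of $q$ into the line bundles $\mathcal L^{\mathrm{nat}}_\chi$. Each $\chi$ occurs with multiplicity $\langle\operatorname{Res}_{G_q}^G\xi,\chi\rangle_{G_q}$. Because the terms in \eqref{eq:general-hj-global-residue-correction} are traces, they are additive over such a splitting. It therefore suffices to compute the contribution of one rank-one summand with residues $R_{i,\chi}$ along $F_i$ and $t_{x,\chi},t_{y,\chi}$ along $B_x,B_y$; in rank one, $\operatorname{tr}(R^2)=R^2$.

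Case $n>1$. The allocated terms fall into three groups.
\begin{enumerate}
\item The self-intersection and canonical terms of the $F_i$ give
\[
-\tfrac12\sum_i\bigl(R_{i,\chi}^2F_i^2+R_{i,\chi}\,K\cdot F_i\bigr).
\]
\item The chain intersections give $-\sum_{i}R_{i,\chi}R_{i+1,\chi}$, since $F_i\cdot F_{i+1}=1$ is a single transverse point.
\item The boundary intersections give $-t_{x,\chi}R_{1,\chi}-t_{y,\chi}R_{\ell,\chi}$, using $B_x\cdot F_1=B_y\cdot F_\ell=1$ and the vanishing of all other intersections between distinct curves in the list.
\end{enumerate}
Now use $F_i^2=-b_i$ and $F_i\cdot F_{i+1}=1$, so that the diagonal and off-diagonal terms together form the quadratic form $\mathbf R_\chi^TJ\mathbf R_\chi$. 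The off-diagonal entries of $J$ appear twice in the quadratic form, which produces exactly the coefficient $-1$ on $\sum_i R_{i,\chi}R_{i+1,\chi}$ once the overall factor $-\tfrac12$ is applied. By \eqref{eq:general-hj-canonical-intersections}, the canonical terms give $\boldsymbol\kappa^T\mathbf R_\chi$. Adding the three groups yields the first line of \eqref{eq:general-hj-universal-correction}.

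Case $n=1$. The only allocated term is the single intersection point $B_x\cap B_y$. Its contribution is $-t_{x,\chi}t_{y,\chi}$.

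The one point requiring care is that the global canonical class $K_S$ restricted to the proper curves $F_i$ agrees with the local computation. This holds because $K_S\cdot F_i$ depends only on $F_i^2$ and the genus, by adjunction. We must also check that the local splitting of $\mathcal V_\xi$ respects the residues at intersection points, so that $\operatorname{tr}(R_a(z)R_b(z))$ is the sum over summands of products of the scalar residues. This follows because the frame from the proof of Proposition~\ref{prop:general-hj-natural-residues} is simultaneously diagonal on each toric chart. With these checks the computation is a direct bookkeeping, and the signs follow the convention of \eqref{eq:master-err-intro}.
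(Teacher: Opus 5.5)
Your proposal is correct and follows the same route as the paper. Both proofs take the terms of \eqref{eq:general-hj-global-residue-correction} allocated to the quotient point for the rank-one natural extension, recognize the chain self-intersections and consecutive intersections as $-\frac12\mathbf R_\chi^TJ\mathbf R_\chi$, get $-\frac12\boldsymbol\kappa^T\mathbf R_\chi$ from adjunction, and add the two boundary intersection points, or only $B_x\cap B_y$ when $n=1$. Your extra checks, the splitting of $\mathcal V_\xi$ into character line bundles and the diagonal residues at chart corners, are handled in the paper in the assembly discussion and the proof of Theorem~\ref{thm:intro-global}; note that this splitting comes from the decomposition into character modules, not from logarithmicity as your wording suggests.
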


\begin{proof}
For $n>1$, the chain self-intersections and internal intersections give
$-\frac12\mathbf R_\chi^T J\mathbf R_\chi$.
Adjunction gives the canonical term
$-\frac12\boldsymbol\kappa^T\mathbf R_\chi$, and the two
boundary intersections contribute
$-t_{x,\chi}R_{1,\chi}-t_{y,\chi}R_{\ell,\chi}$.
For $n=1$, only the transverse intersection of the boundaries remains.
\end{proof}

In particular, a smooth residual quotient can still contribute when both
reflection subgroups are nontrivial. The local formula never requires the
self-intersection of either boundary curve.

\begin{remark}[Other logarithmic extensions]
\label{rem:general-hj-other-extensions}
The natural extension suffices here, but the comparison with
Section~\ref{sec:preliminaries} can also be made explicit. Any other
line-bundle extension of the same flat bundle that agrees with the natural
extension and its connection away from the exceptional chain, including
along the boundaries, is $\mathcal L^{\log}_\chi=\mathcal L^{\mathrm{nat}}_\chi(D)$
with $D=\sum_{i=1}^{\ell}d_iF_i$ and $d_i\in\mathbb Z$. Its boundary residues
are unchanged, and $R_{i,\chi}^{\log}=R_{i,\chi}-d_i$. The class
$[\mathcal L^{\log}_\chi]-[\mathcal L^{\mathrm{nat}}_\chi]$ is supported on
the proper curve $F_1\cup\cdots\cup F_\ell$, and Riemann--Roch on the
exceptional curves \cite[Chapter~IV, \S~1]{hartshorne_algebraic_1977} gives
its Euler characteristic
\begin{equation}
\label{eq:general-hj-comparison-number}
    \delta_\chi
       =c_1(\mathcal L^{\mathrm{nat}}_\chi)\cdot D
          +\frac12(D^2-K\cdot D).
\end{equation}
Here $c_1(\mathcal L^{\mathrm{nat}}_\chi)\cdot F_i
=b_iR_{i,\chi}-R_{i-1,\chi}-R_{i+1,\chi}$, with $R_{0,\chi}=t_{x,\chi}$ and
$R_{\ell+1,\chi}=t_{y,\chi}$, by the monomial transition functions.
Substituting $R_{i,\chi}^{\log}$ in
\eqref{eq:general-hj-universal-correction} changes its value by
$-\delta_\chi$. Thus the correction for the natural extension is obtained by
adding $\delta_\chi$ to the expression computed with the new residues, as in
\eqref{eq:prelim-extension-comparison}.
\end{remark}

\subsection{The central curve and assembly}

We now re-use the subscript $q$ on local data when comparing different
points of $E$.
Let $H_E\subset G_p$ be the subgroup acting trivially on $E$.
It consists of the scalar matrices in the tangent representation at $p$,
so it is cyclic and central. Write $r_E:=|H_E|$.
It is the generic isotropy subgroup along $E$, and $H_{x,q}=H_E$ in every
diagonal chart: fixing an open subset of $E$ pointwise is equivalent
to acting trivially on $E$. Hence $r_{x,q}=r_E$.
Let $\overline E$ be the image of $E$ on the quotient before resolving its
singularities.

For the local quotient map $\pi$, one has $\pi^*\overline E=r_EE$. The
projection formula and $E^2=-1$ give
\begin{equation}
\label{eq:general-hj-central-square-singular}
    \overline E^{\,2}=-\frac{r_E^2}{|G_p|},
\end{equation}
a rational intersection number on the normal quotient.

Let $T$ be the strict transform of $\overline E$ on $S$. At each
singularity of the quotient lying on $\overline E$, index the exceptional
chain from $T$, as above. The numerical pullback of Mumford
\cite{mumford_topology_1961} is
\[
    \mu^*\overline E=T+\sum_{q:\,n_q>1}\sum_{i=1}^{\ell_q}
                   v_{q,i}^{(1)}F_{q,i}.
\]
Indeed, the right-hand side has zero intersection with every exceptional
curve: with $v_{q,0}^{(1)}=1$ the coefficient of $T$ and
$v_{q,\ell_q+1}^{(1)}=0$, the ray recurrence gives
\[
    v_{q,i-1}^{(1)}-b_{q,i}v_{q,i}^{(1)}+v_{q,i+1}^{(1)}=0
    \qquad(1\leq i\leq\ell_q).
\]
Intersecting with $T$ gives
\begin{equation}
\label{eq:general-hj-central-square-resolved}
    T^2=-\frac{r_E^2}{|G_p|}
          -\sum_{q:\,n_q>1}\frac{a_q^{\vee}}{n_q}.
\end{equation}
The sum has one representative for each $G_p$-orbit whose image on
$\overline E$ is a singular point of the quotient. Smooth quotient points
contribute nothing. The inverse $a_q^{\vee}$ occurs because the chain
starts at $X=0$. The same computation gives the self-intersections of the
branch curves on $S$. If a branch curve passes through a singular quotient
point as the image of $\{x=0\}$, the decrease is $a_q^\vee/n_q$, as above.
If it passes as the image of $\{y=0\}$, the coefficients of its pullback
along the chain are the $v_{q,i}^{(2)}$, and the decrease is
$v_{q,\ell_q}^{(2)}=a_q/n_q$.

Since $T\cong E/G_p\cong\mathbb P^1$, adjunction gives
$K_S\cdot T=-2-T^2$. Thus both numerical terms involving only $T$ are
determined.

Let $\Omega_1,\ldots,\Omega_t\subset E$ be the orbits with
isotropy subgroup larger than $H_E$, and choose $q_s\in\Omega_s$.
There are finitely many such orbits, and there are none if the projective
action is trivial. Each quotient singularity on $\overline E$, and each
intersection of $\overline E$ with another branch curve, comes from one of
these orbits. Put
\[
    \rho_{q_s}^{\mathrm{loc}}
       :=\sum_{\chi\in\operatorname{Irr}(G_{q_s})}m_{q_s,\chi}\chi,
\]
which includes the smooth-intersection term when $n_{q_s}=1$.

To define the central term, let $\lambda_E$ be the character of $H_E$
on a normal coordinate function along $E$. It is faithful. For
$\psi=\lambda_E^d$, $0\leq d<r_E$, put $t_\psi=d/r_E$ and set
\[
    \sigma_p
    :=\operatorname{Ind}_{H_E}^{G_p}
      \left(
       \sum_{\psi\in\operatorname{Irr}(H_E)}
        -\frac12\left(t_\psi^2T^2+t_\psi K_S\cdot T\right)\psi
      \right).
\]
If $H_E$ is trivial, this class is zero.

Let $\mathcal E_p$ be the union of $T$ and all the chains over
$\overline E$. Assign to $\mathcal E_p$ every term of
\eqref{eq:general-hj-global-residue-correction} involving one of its
components, including intersections with branch curves outside
$\mathcal E_p$. The resulting class on $G_p$ is
\begin{equation}
\label{eq:general-hj-global-assembly}
    \rho_p^{\mathrm{exc}}
       =\sigma_p
        +\sum_{s=1}^t
           \operatorname{Ind}_{G_{q_s}}^{G_p}
                         \rho_{q_s}^{\mathrm{loc}}.
\end{equation}
The terms involving only $T$ occur once.

To verify the character multiplicities, use the multiplicity sheaf
$\mathcal F_\eta:=\operatorname{Hom}_{G_p}(W_\eta,\pi_*\mathcal O_{X'})$
of rank $\eta(1)$ in the $G_p$-quotient model. Near the image of $q_s$,
its natural extension splits into the local line bundles, with
$\mathcal L^{\mathrm{nat}}_\chi$ occurring
$\langle\operatorname{Res}_{G_{q_s}}^{G_p}\eta,\chi\rangle_{G_{q_s}}$
times. Frobenius reciprocity \cite{serre_linear_1977} gives
\[
    \sum_\chi
       \left\langle\operatorname{Res}_{G_{q_s}}^{G_p}\eta,
              \chi\right\rangle_{G_{q_s}}m_{q_s,\chi}
    =\left\langle
       \operatorname{Ind}_{G_{q_s}}^{G_p}\rho_{q_s}^{\mathrm{loc}},
       \eta\right\rangle_{G_p}.
\]
The same argument at the generic point of $T$ gives $\sigma_p$.
There is no additional factor of $\eta(1)$: we use the multiplicity
sheaf, rather than the full isotypic summand. For a character $\xi$ of the
ambient group $G$, the corresponding coefficient is
$\langle\rho_p^{\mathrm{exc}},\operatorname{Res}_{G_p}^G\xi\rangle_{G_p}$.

By Proposition~\ref{prop:general-hj-natural-residues}, the natural
connections are logarithmic near $\mathcal E_p$, so no comparison term is
needed there. Once every isotropy subgroup acts diagonally, as in
Theorem~\ref{thm:intro-global}, the natural extension is logarithmic
everywhere. If a different extension is used at other points, it can be
glued to the natural one along the smooth branch locus, where both agree
with Deligne's extension with residues in $[0,1)$
\cite{deligne_equations_1970}, and the comparison is then made as in
Section~\ref{sec:preliminaries}.

If the original action is free off $p$ in a neighborhood of $p$, no
other branch curve meets this configuration, and
$\rho_p^{\mathrm{exc}}$ is the full local correction $\rho_p$.
If fixed curves pass through $p$, their strict transforms also have
self-intersection and canonical-intersection terms. Formula
\eqref{eq:general-hj-global-assembly} gives only the exceptional part.
The remaining branch terms stay in the global formula.

In summary, a term of \eqref{eq:general-hj-global-residue-correction} is
local when it involves an exceptional curve of $\mu$ or an intersection
point of two components of $\Delta$. It is assigned to the quotient point
over which it lies. The self-intersection and canonical terms of $T$ are
assigned to $p$ through $\sigma_p$, since
\eqref{eq:general-hj-central-square-resolved} computes them from local data.
The self-intersection and canonical terms of every other branch curve remain
in the global formula.

\subsection{Independence of the local model}
\label{subsec:general-hj-model-independence}

Suppose that $G_p$ acts freely on $U\setminus\{p\}$ for some $G_p$-stable
neighborhood $U$ of $p$, and let $\bar p$ be the image of $p$ in $U/G_p$.
An \emph{admissible model} $M$ at $p$ consists of a $G_p$-equivariant
composition $U'\to U$ of blow-ups of points over $p$, possibly empty, and a
resolution $\nu\colon S_U\to U'/G_p$ that is an isomorphism over the smooth
locus, such that the reduced preimage $\Delta_{\bar p}$ of $\bar p$ in $S_U$
is SNC and the natural extensions of the multiplicity sheaves $\mathcal F_\eta$,
$\eta\in\operatorname{Irr}(G_p)$, are logarithmic along $\Delta_{\bar p}$.
The terms of \eqref{eq:general-hj-global-residue-correction} that involve
components of $\Delta_{\bar p}$ define a class
$\rho_p^{M}\in\operatorname{Rep}(G_p)\otimes_{\mathbb Z}\mathbb Q$: its
pairing with $\eta$ is the sum of these terms for the natural extension of
$\mathcal F_\eta$. Since the action is free off $p$, no other component of
the logarithmic divisor meets $\Delta_{\bar p}$, so these terms are
determined by $M$. By Proposition~\ref{prop:general-hj-natural-residues},
the minimal resolution of a cyclic quotient without quasi-reflections is
admissible. So is the blow-up of $p$ followed by the minimal resolution, and
then $\rho_p^M=\rho_p^{\mathrm{exc}}$.

\begin{lemma}[Independence of the local model]
\label{lem:general-hj-model-independence}
If $G_p$ acts freely on a punctured neighborhood of $p$, then $\rho_p^M$ is
the same for all admissible models $M$ at $p$. It depends only on the
conjugacy class of $G_p\subset\operatorname{GL}(T_{X,p})$.
\end{lemma}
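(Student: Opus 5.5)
The plan is to turn the local class $\rho_p^M$ into a difference of global Euler characteristics that do not depend on $M$. The tool is Proposition~\ref{extension_prop} together with the residue identity \eqref{eq:general-hj-global-residue-correction}. The first step is to reduce to a linear model. By the equivariant local linearization already used in Lemma~\ref{lem:diag-formal}, the germ $(U,p)$ with its $G_p$-action is equivariantly isomorphic to $(\mathbb C^2,0)$ with the linear action of $G_p\subset\operatorname{GL}(T_{X,p})$.

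Every datum entering $\rho_p^M$ is determined by the germ of $M$ over $\bar p$. This includes the blow-ups over $p$, the resolution, the components of $\Delta_{\bar p}$, their self-intersections and canonical degrees, and the residues of the natural extensions. Since the action is free off $p$, no other component of the logarithmic divisor meets $\Delta_{\bar p}$. So $\rho_p^M$ depends only on the equivariant germ, and conjugate subgroups of $\operatorname{GL}_2$ give isomorphic germs. This already gives the last assertion once independence of $M$ is known.

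Next I would globalize. Embed the linear action in $\operatorname{GL}_3$ by letting $G_p$ act on $[z_1:z_2:z_0]$ through the tangent representation on $(z_1,z_2)$ and trivially on $z_0$. This gives an effective action on $\overline X=\mathbb P^2$ with $p=[0:0:1]$ an isolated fixed point and the action free on a punctured neighborhood of $p$. Away from $\pi^{-1}(\bar p)$, fix once and for all a model $N$. It consists of the equivariant blow-ups of finite orbits making every isotropy group diagonal and separating fixed curves in an orbit, followed by the minimal resolution of the quotient, exactly as in the setting of Theorem~\ref{thm:intro-global}. By Proposition~\ref{prop:general-hj-natural-residues}, the natural extensions of the $\mathcal F_\eta$ are logarithmic there, along an SNC divisor that includes the strict transforms of the branch curves. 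In particular this covers the line at infinity, which the scalars fix pointwise.

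For any admissible model $M$ at $p$, glue $M$ over a neighborhood of $\bar p$ to $N$ elsewhere. The result is a smooth projective surface $S_M$ with a resolution $S_M\to\overline Y'_M$. The natural extension $\mathcal V_\eta^M$ of $\mathcal F_\eta$ is then a vector bundle whose flat connection is logarithmic along an SNC divisor. This uses admissibility near $\Delta_{\bar p}$, $N$ elsewhere, and the fact that the two pieces do not meet.

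Now compare two admissible models $M$ and $M'$. By Proposition~\ref{extension_prop} and equivariant birational invariance (Lemma~\ref{lem:birational-invariance}), $\chi(S_M,\mathcal V^M_\eta)=\langle\chi_{G_p}(\mathbb P^2,\mathcal O),\eta\rangle$, which is independent of $M$. Quotient singularities are rational, and $\mathbb P^2$ is birational to the blow-ups, so $\chi(\mathcal O_{S_M})=\chi(\mathcal O_{\mathbb P^2/G_p})$ is also independent of $M$. Applying \eqref{eq:general-hj-global-residue-correction} on $S_M$ and $S_{M'}$ with $r=\eta(1)$, the left sides agree. On the right, every term not involving a component of $\Delta_{\bar p}$ lives on the common part $N$. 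Those terms are identical: the same curves, the same residues, the same self-intersections and the same canonical degrees, the last two because the curves are disjoint from $\Delta_{\bar p}$ and $K$ is computed locally. Subtracting gives $\langle\rho_p^M,\eta\rangle=\langle\rho_p^{M'},\eta\rangle$ for every $\eta$.

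I expect the main obstacle to be this last bookkeeping. One must check that self-intersection and canonical-degree terms of curves of $N$ really are unchanged by modifying the surface only near $\bar p$, which holds because those curves are disjoint from the modified locus. One must also check that the gluing produces a global SNC logarithmic divisor, so that Ohtsuki's formula applies. A secondary point is that the residue identity is stated for flat logarithmic connections on the multiplicity bundles. This requires that the natural extensions from $M$ and from $N$ agree on the overlap, which holds because both are $\nu^*\mathcal F_\eta/\mathrm{torsion}$ and the overlap maps isomorphically to the smooth quotient.
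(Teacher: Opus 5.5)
Your proposal is correct and follows the paper's proof essentially step for step: Cartan linearization and transport of structure, realization of the linear germ on $\mathbb P^2$ with $G_p$ acting through $(z_1,z_2)$ and trivially on $z_0$, equality of the left sides of \eqref{eq:general-hj-global-residue-correction} via Proposition~\ref{extension_prop} and birational invariance, and cancellation of every term not involving $\Delta_{\bar p}$. The paper makes your gluing step explicit: it extends the point blow-ups of $M$ to $\mathbb P^2$ and uses that any resolution factors through the minimal one followed by point blow-ups, so that $S_M$ is visibly projective; it also observes that points on the line at infinity already have diagonal isotropy, so your auxiliary model $N$ is simply the minimal resolution there.
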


\begin{proof}
By Cartan's linearization theorem, the germ $(X,p)$ is $G_p$-equivariantly
isomorphic to $(T_{X,p},0)$ with the linear action. Since $\rho_p^M$ is
defined by data over a neighborhood of $p$, an admissible model transports
along this isomorphism to an admissible model at $0$ with the same class. We
may therefore assume that the germ is linear, and realize it globally. Let
$G:=G_p$ act on $\mathbb P^2$ by $g[z_0:z_1:z_2]=[z_0:g(z_1,z_2)]$, using the
tangent representation, and take $p=[1:0:0]$. This action is effective. No
nonidentity element has eigenvalue $1$ on $T_{X,p}$, so $G$ acts freely off
$p$ on the invariant open set $U_0:=\mathbb P^2\setminus L_\infty$, where
$L_\infty=\{z_0=0\}$.

A point $q\in L_\infty$ corresponds to a line in $T_{X,p}$. Its isotropy
subgroup preserves this line and an invariant complement, so, as in the
proof of Lemma~\ref{lem:diag-formal}, it acts diagonally near $q$. The only
possible fixed curve is $L_\infty$ itself, because a fixed line through $p$
would require the eigenvalue $1$. By
Proposition~\ref{prop:general-hj-natural-residues}, the minimal resolution
of $\mathbb P^2/G$ near the image of $L_\infty$ therefore carries logarithmic
natural extensions, along an SNC divisor.

Let $M_1,M_2$ be admissible models at $p$, defined over a $G$-stable
neighborhood $U\subset U_0$ of $p$. For $i=1,2$, the blow-ups in $M_i$ are
blow-ups of points over $p$, so they extend to a $G$-equivariant composition
of point blow-ups $X'_i\to\mathbb P^2$ that is an isomorphism off $p$. Near
the image of $p$, the resolution in $M_i$ factors through the minimal
resolution of $X'_i/G$ followed by blow-ups of points, since every resolution
of a normal surface singularity factors through the minimal one and a proper
birational morphism of smooth surfaces is a composition of point blow-ups
\cite[Chapter~V, \S~5]{hartshorne_algebraic_1977}. Performing the same
point blow-ups on the minimal resolution of $X'_i/G$, which near the image of
$L_\infty$ is the fixed resolution above, gives a projective resolution
$\mu_i\colon S_i\to Y'_i:=X'_i/G$ that agrees with $M_i$ over $U/G$. The natural extensions
$\mathcal V_\xi^{(i)}:=\mu_i^*\mathcal F_\xi/\operatorname{torsion}$ are
logarithmic along an SNC divisor $\Delta_i$. By
Proposition~\ref{extension_prop} and equivariant birational invariance,
\[
    \chi\bigl(S_i,\mathcal V_\xi^{(i)}\bigr)
    =\chi(Y'_i,\mathcal F_\xi)
    =\bigl\langle\chi_G(\mathbb P^2,\mathcal O),\xi\bigr\rangle_G,
\]
and $\chi(\mathcal O_{S_i})=\chi(\mathcal O_{\mathbb P^2/G})$ because
quotient singularities are rational. Thus the right side of
\eqref{eq:general-hj-global-residue-correction} for $\mathcal V_\xi^{(i)}$
is the same for $i=1,2$.

Split it into the terms involving components of $\Delta_{\bar p}^{(i)}$ and
the rest. The first part is $\langle\rho_p^{M_i},\xi\rangle_G$. The
complements $S_i\setminus\Delta_{\bar p}^{(i)}$ are identified with each
other, together with the natural extensions and their connections. Every
other component of $\Delta_i$ is a proper curve in this common open set,
disjoint from $\Delta_{\bar p}^{(i)}$, so its self-intersection, canonical
degree, and residues are the same for $i=1,2$. Hence the remaining terms
coincide, and $\langle\rho_p^{M_1},\xi\rangle_G=\langle\rho_p^{M_2},\xi\rangle_G$
for all $\xi\in\operatorname{Irr}(G)$. Since $G=G_p$, this proves
$\rho_p^{M_1}=\rho_p^{M_2}$.

For the second assertion, equivariantly isomorphic germs have admissible
models with the same class, by transport of structure. By the first
assertion, $\rho_p$ therefore depends only on the isomorphism class of the
germ, which by Cartan's theorem is determined by the conjugacy class of
$G_p\subset\operatorname{GL}(T_{X,p})$.
\end{proof}

We write $\rho_p$ for this class. In particular, at a cyclic quotient
point the direct calculations of Proposition~\ref{prop:An-local-correction}
and Proposition~\ref{prop:general-hj-residue-formula} agree with the
blow-up calculation \eqref{eq:general-hj-global-assembly}, and the classes of
Theorem~\ref{thm:blowup-assembly} are intrinsic to the rational double
points. When fixed curves pass through $p$, the exceptional part does not need to
be independent of the model, since blowing up changes the self-intersections
of the branch curves. Only its sum with the branch terms is intrinsic.

\subsection{Checks and the resulting procedure}

The scaling and orientation conventions are illustrated by the
$\frac15(1,2)$ calculation of Section~\ref{subsec:applications-local}.
For a binary polyhedral arm with $G_q=C_k$, take $\lambda$ to be the
character on the normal coordinate function. Thus
$\alpha=\lambda$, $\beta=\lambda^{-2}$, $r_x=r_E=2$, $r_y=1$, and
the residual action has $n=k$ and $a=k-1$. For
$\chi=\lambda^{2r+d}$, with $0\leq r<k$ and $d\in\{0,1\}$, the character
data are simply $d_x=d$, $d_y=0$, and $c_\chi=r$.
The residue formula gives \eqref{eq:even-arm-residues} and
\eqref{eq:odd-arm-residues}, and hence the arm class $\alpha_k$ of
Proposition~\ref{prop:cyclic-arm-correction}.
Since $T^2=-2$ and $K_S\cdot T=0$ in these cases, the central term is
$\sigma_p=\frac14\operatorname{Ind}_{\{\pm I\}}^{G_p}\varepsilon$,
as in \eqref{eq:central-reflection-correction}. Thus
\eqref{eq:general-hj-global-assembly} recovers
Theorem~\ref{thm:blowup-assembly}.

In general, the calculation is finite. For each special orbit, determine
$r_x,r_y,n,a$ and the character data $d_x,d_y,c_\chi$. Compute the rays
from \eqref{eq:general-hj-ray-recurrence}, then the residues from
\eqref{eq:general-hj-finite-residue-formula}. Insert them into
\eqref{eq:general-hj-universal-correction}, compute the central term from
\eqref{eq:general-hj-central-square-resolved}, and assemble by
\eqref{eq:general-hj-global-assembly}. When fixed curves are present,
retain their remaining terms in the global formula. This gives an explicit
procedure without requiring a separate logarithmic modification in the
diagonal case.

\subsection{Proof of the global formula}
\label{subsec:general-hj-global-proof}

\begin{proof}[Proof of Theorem~\ref{thm:intro-global}]
First take a $G$-equivariant log resolution of the support of $D$, as in
Section~\ref{sec:reduction}. The generic isotropy subgroup along a fixed
curve is cyclic, since it acts faithfully on the normal line. Hence the
points with non-abelian isotropy subgroup are isolated, and there are
finitely many of them. Blow up their full $G$-orbits. By
Lemma~\ref{lem:diag-formal}, all the new isotropy subgroups act diagonally.
The remaining isotropy subgroups are abelian, hence diagonalizable after
linearization.

In a diagonal chart, fixed curves are coordinate axes, so they meet
transversely. If distinct fixed curves in the same $G$-orbit meet, blow up
the orbits of their intersection points. This separates their strict
transforms, and Lemma~\ref{lem:diag-formal} again shows that the new
isotropy subgroups act diagonally. The exceptional curves created in each
orbit are mutually disjoint and are not translates of an older fixed
curve. These blow-ups also preserve the SNC support of the pulled-back
divisor. This gives the required $f\colon X'\to X$.

The local quotients of $X'$ are the diagonal models treated above, and the
branch curves are images of coordinate axes. A fixed curve is smooth, and
after the separation of translates two points of it are identified in the
quotient only by its setwise stabilizer. Hence each branch curve on $Y'$ is
smooth. The toric resolutions have smooth exceptional curves and transverse
boundary intersections, so the full divisor $\Delta$ is SNC.

The multiplicity sheaf $\mathcal F_\xi$ is reflexive, as a direct summand
of the reflexive sheaf $\pi'_*\mathcal O_{X'}$. On a small analytic
neighborhood of the image of $q\in X'$, Frobenius reciprocity gives its
decomposition into the local character modules. Their natural extensions
therefore give
\[
    \mathcal V_\xi
    \cong\bigoplus_{\chi\in\operatorname{Irr}(G_q)}
       \bigl(\mathcal L^{\mathrm{nat}}_\chi\bigr)^{
       \oplus\langle\operatorname{Res}_{G_q}^G\xi,\chi\rangle_{G_q}}
\]
locally over $Y'$. The multiplicities sum to $\xi(1)$ because $G_q$ is
abelian. Proposition~\ref{prop:general-hj-natural-residues} gives a
logarithmic connection on each summand, induced by the same connection
$d$. Hence the global natural extension has rank $\xi(1)$ and is
logarithmic along $\Delta$. Proposition~\ref{extension_prop} and
\eqref{eq:general-hj-global-residue-correction} give
\[
    \chi(Y',\mathcal F_\xi)
      =\chi(S,\mathcal V_\xi)
      =\xi(1)\chi(S,\mathcal O_S)-m_\xi(\mathcal O_X).
\]
Since $Y$ and $Y'$ are birational surfaces with rational singularities,
$\chi(S,\mathcal O_S)=\chi(Y,\mathcal O_Y)$. The multiplicity-sheaf
decomposition and equivariant birational invariance now give
\[
    \chi_G(X,\mathcal O_X)
    =\chi(Y,\mathcal O_Y)\chi_{\mathrm{reg}}
       -\sum_{\xi\in\operatorname{Irr}(G)}m_\xi(\mathcal O_X)\xi.
\]
Finally, apply Proposition~\ref{prop:ordered-peeling} to $f^*D$ and use
$f^*\mathcal L\simeq\mathcal O_{X'}(f^*D)$ with its natural divisor
linearization. Equivariant birational invariance identifies its Euler
characteristic with $\chi_G(X,\mathcal L)$, giving
\eqref{eq:intro-global-formula}.
\end{proof}

%% file: ch-applications.tex
\ProvidesFile{ch-applications.tex}

\section{Examples and Applications}
\label{sec:applications}

We finish with some applications of the preceding calculations. First, we compare the local classes with the holomorphic Lefschetz formula and recover two of the formulas for Kleinian surface singularities in \cite{lim_riemannroch_2023}. We then work out the local calculation for $\frac15(1,2)$ and use it in a global example on $\mathbb P^2$. Finally, we treat an action of $S_3$ on $\mathbb P^2$, where the pre-quotient blow-up is needed and fixed curves pass through the blown-up point. Throughout, character labels refer to the action on coordinate functions, as in Section~\ref{sec:general-hj}.

\subsection{Comparison with the Lefschetz and Kleinian coefficients}

\paragraph{The Lefschetz class function.}
Let $p$ be a point with isotropy group $G_p$, and let $G_p$ act on the cotangent space $T_{X,p}^*$ through its action on coordinate functions. If the coordinate functions have characters $\alpha,\beta$, then $\det(1-g\,|\,T_{X,p}^*)=(1-\alpha(g))(1-\beta(g))$. When $p$ is an isolated fixed point of every nonidentity element of $G_p$, this determinant is nonzero for $g\neq1$, and we define a class function on $G_p$ by
\begin{equation}
\label{eq:applications-lefschetz-class}
    \rho_p^{L}(g)
    =-\frac1{\det(1-g\,|\,T_{X,p}^*)}\quad(g\neq1),
    \qquad
    \rho_p^{L}(1)
    =\sum_{h\neq1}\frac1{\det(1-h\,|\,T_{X,p}^*)}.
\end{equation}
Suppose that every point of $X$ with nontrivial isotropy is isolated. Then
\begin{equation}
\label{eq:applications-lefschetz-global}
    \sum_{\xi\in\operatorname{Irr}(G)}m_\xi(\mathcal O_X)\,\xi
    =\sum_{[p]}\operatorname{Ind}_{G_p}^G\rho_p^{L},
\end{equation}
where the sum runs over the $G$-orbits of such points. Indeed, by \eqref{eq:intro-global-formula} with $\mathcal L=\mathcal O_X$, the left side is the class function $\chi(\mathcal O_Y)\chi_{\mathrm{reg}}-\chi_G(X,\mathcal O_X)$. For $g\neq1$, the holomorphic Lefschetz formula \cite{atiyah_lefschetz_1968,atiyah_index_1968} gives
\[
    \chi_G(X,\mathcal O_X)(g)
    =\sum_{p\in X^g}\frac1{\det(1-g\,|\,T_{X,p}^*)},
\]
which is the negative of the right side at $g$. At $g=1$, the right side is
\[
    \sum_{g\neq1}\chi_G(X,\mathcal O_X)(g)
    =|G|\chi(\mathcal O_Y)-\chi(\mathcal O_X),
\]
because $\langle\chi_G(X,\mathcal O_X),\mathbf1\rangle_G=\chi(\mathcal O_Y)$.

By Lemma~\ref{lem:general-hj-model-independence}, $\rho_p$ depends only on the local action at $p$, not on the model used to compute it. Thus the $D_4$ class below, which comes from the pre-quotient blow-up, and the cyclic classes, which are computed directly on the quotient, are directly comparable. The class $\rho_p^{L}$ is also local, and \eqref{eq:applications-lefschetz-global} identifies the sums of the two over all orbits. We do not prove the local equality $\rho_p=\rho_p^{L}$ in general. In each example below with isolated fixed points, we check it directly.

\paragraph{The Kleinian coefficients.}
Let $G_p\subset\operatorname{SL}_2(\mathbb C)$, and let $W$ be its natural two-dimensional representation. For an irreducible character $\eta$, the coefficients of Lim and Rota can be written as
\[
    T_\eta
    =\frac1{|G_p|}\sum_{g\neq1}
      \frac{\eta(g)}{2-\operatorname{tr}(g|W)}.
\]
Lim and Rota obtain them from To\"en's Riemann--Roch theorem for Deligne--Mumford stacks \cite[Section~2]{lim_riemannroch_2023}. For a global quotient, these are the local terms of Kawasaki's orbifold Riemann--Roch formula \cite{kawasaki_riemann-roch_1979}. Their normalization differs from ours. Since $W\cong T_{X,p}^*$ and $\det(1-g|W)=2-\operatorname{tr}(g|W)$ for $g\in\operatorname{SL}_2(\mathbb C)$, expanding the inner product with \eqref{eq:applications-lefschetz-class} gives
\[
    \langle\rho_p^{L},\eta\rangle_{G_p}
    =\eta(1)T_{\mathbf1}-T_{\bar\eta}.
\]
Moreover, $T_{\bar\eta}=T_\eta$: replace $g$ by $g^{-1}$ and use $\operatorname{tr}(g^{-1}|W)=\operatorname{tr}(g|W)$. Hence the local equality $\rho_p=\rho_p^{L}$ is equivalent to
\begin{equation}
\label{eq:applications-normalization}
    m_{p,\eta}=\eta(1)T_{\mathbf1}-T_\eta
    \qquad(\eta\in\operatorname{Irr}(G_p)).
\end{equation}
This is the expected relation: our correction is measured relative to $\eta(1)\chi(\mathcal O_Y)$, while $T_{\mathbf1}$ is the contribution to $\chi(\mathcal O_Y)$ itself.

The regular character vanishes at $g\neq1$, so
\[
    \sum_\eta\eta(1)T_\eta=0,
    \qquad
    T_{\mathbf1}
    =\frac1{|G_p|}\sum_\eta\eta(1)m_{p,\eta}
\]
whenever \eqref{eq:applications-normalization} holds. Thus our coefficients also determine the constant $T_{\mathbf1}$.

\paragraph{The cyclic case.}
For a singularity of type $A_{N-1}$, choose a generator $g$ and a character $\chi$ so that the coordinate functions have characters $\chi,\chi^{-1}$, with $\chi(g)=\exp(2\pi i/N)$. Proposition~\ref{prop:An-local-correction} gives
\[
    m_{p,\chi^j}=\frac{j(N-j)}{2N},
    \qquad 0\leq j<N.
\]
Averaging these coefficients gives $T_{\mathbf1}=(N^2-1)/(12N)$, and \eqref{eq:applications-normalization} predicts
\begin{equation}
\label{eq:applications-cyclic-T}
    T_{\chi^j}
    =\frac{N^2-1}{12N}-\frac{j(N-j)}{2N}.
\end{equation}
This is the formula in \cite[Section~4.1]{lim_riemannroch_2023}, where the cyclic calculation is attributed to Lieblich. Hence $\rho_p=\rho_p^{L}$ for every $A_{N-1}$ singularity.

\paragraph{The $D_4$ case.}
Here $G_p=Q_8$, and there are three special isotropy orbits, each with a cyclic isotropy group of order four. At one orbit representative, choose a generator $g$ and write $\lambda(g)=i$ for the character on the normal coordinate function. Thus
\[
    g\cdot x=ix,\qquad g\cdot y=-y.
\]
Proposition~\ref{prop:cyclic-arm-correction} gives the arm class
\[
    \alpha_2=-\frac1{16}\lambda
              +\frac14\lambda^2
              +\frac3{16}\lambda^3.
\]
The central curve contributes $\frac14\operatorname{Ind}_{\{\pm I\}}^{Q_8}\varepsilon$, where $\varepsilon$ is the nontrivial character of $\{\pm I\}$.

Write $\varepsilon_1,\varepsilon_2,\varepsilon_3$ for the nontrivial characters of degree one, and $\theta$ for the irreducible character of degree two, as in Section~\ref{sec:obstruction}. Each $\varepsilon_i$ restricts to $\lambda^2$ on two of the three isotropy subgroups and trivially on the third. Also, $\theta$ restricts to $\lambda+\lambda^3$ on each subgroup and to $2\varepsilon$ on $\{\pm I\}$. The assembly formula and Frobenius reciprocity therefore give
\[
\begin{aligned}
    m_{p,\mathbf1}&=0,\qquad
    m_{p,\varepsilon_i}=2\left(\frac14\right)=\frac12,\\
    m_{p,\theta}&=\frac12+3\left(-\frac1{16}+\frac3{16}\right)=\frac78.
\end{aligned}
\]
It follows that
\[
    T_{\mathbf1}
    =\frac18\left(3\cdot\frac12+2\cdot\frac78\right)
    =\frac{13}{32}.
\]
Using \eqref{eq:applications-normalization}, we obtain
\[
    T_{\varepsilon_i}=-\frac3{32},
    \qquad
    T_\theta=-\frac1{16},
\]
as in \cite[Section~4.2.1]{lim_riemannroch_2023}. These values can also be read off directly: $2-\operatorname{tr}(g|W)$ equals $4$ at $-I$ and $2$ at the six elements of order four. Hence $\rho_p=\rho_p^{L}$ for $D_4$. The same comparison can be carried out for the other binary polyhedral groups, using Theorem~\ref{thm:blowup-assembly}.

\subsection{The local calculation for \texorpdfstring{$\frac15(1,2)$}{1/5(1,2)}}
\label{subsec:applications-local}

Let $p$ be a fixed point with isotropy group $\langle g\rangle\cong\mathbb Z_5$, and choose coordinates with
\[
    g\cdot x=\zeta_5x,\qquad g\cdot y=\zeta_5^2y,
    \qquad \xi(g)=\zeta_5.
\]
We apply the diagonal calculation of Section~\ref{sec:general-hj} directly to this quotient. No preliminary blow-up is needed. There are no quasi-reflections, so $r_x=r_y=1$ and the coordinates $X,Y$ of that section are just $x,y$. Also, $n=5$, $a=2$, and $a^\vee=3$. The continued fraction $5/3=[2,3]$, ordered from $x=0$ to $y=0$, gives
\[
    v_1=\left(\frac35,\frac15\right),\qquad
    v_2=\left(\frac15,\frac25\right),\qquad
    J=\begin{pmatrix}-2&1\\1&-3\end{pmatrix},\qquad
    \boldsymbol\kappa=\begin{pmatrix}0\\1\end{pmatrix}.
\]
Here $J$ is the intersection matrix of the exceptional chain, and $\boldsymbol\kappa$ records its canonical intersections.

For $\chi=\xi^c$, with $0\leq c<5$, the character data of Section~\ref{sec:general-hj} are $d_x=d_y=0$ and $c_\chi=c$. The monomials in the character module satisfy
\[
    x^u y^v:\qquad u+2v\equiv c\pmod5.
\]
Taking their minimum valuations along $v_1,v_2$ gives the residues below. Since both boundary residues vanish, Proposition~\ref{prop:general-hj-residue-formula} gives
\[
    m_{p,\chi}
    =-\frac12\left(
       \mathbf R_\chi^T J\mathbf R_\chi
       +\boldsymbol\kappa^T\mathbf R_\chi
      \right),
    \qquad
    \mathbf R_\chi=(R_{1,\chi},R_{2,\chi})^T.
\]
Thus
\[
\renewcommand{\arraystretch}{1.25}
\begin{array}{c|cc|c}
 \chi&R_{1,\chi}&R_{2,\chi}&m_{p,\chi}\\ \hline
 \mathbf1&0&0&0\\
 \xi&3/5&1/5&1/5\\
 \xi^2&1/5&2/5&0\\
 \xi^3&4/5&3/5&2/5\\
 \xi^4&2/5&4/5&2/5
\end{array}
\]
For example, for $\chi=\xi^3$, the monomial $xy$ has valuations $(4/5,3/5)$ and attains both minima. Therefore
\[
    J\mathbf R_{\xi^3}=\begin{pmatrix}-1\\-1\end{pmatrix},
    \qquad
    m_{p,\xi^3}
    =-\frac12\left(-\frac75+\frac35\right)
    =\frac25.
\]
The full local correction is
\begin{equation}
\label{eq:applications-local-5-2}
    \rho_p=\frac15\xi+\frac25\xi^3+\frac25\xi^4.
\end{equation}
In particular, a nontrivial character can have zero correction, as happens for $\xi^2$. One checks that
\[
    \rho_p(g^k)=-\frac1{(1-\zeta_5^k)(1-\zeta_5^{2k})},
    \qquad 1\leq k\leq4,
\]
and $\rho_p(1)=1$, so again $\rho_p=\rho_p^{L}$.

\subsection{A global example on \texorpdfstring{$\mathbb P^2$}{P2}}
\label{subsec:applications-global}

Let $X=\mathbb P^2$, and let $G=\langle g\rangle\cong\mathbb Z_5$ act on points by
\[
    g[z_0:z_1:z_2]
      =[z_0:\zeta_5^{-1}z_1:\zeta_5^{-2}z_2].
\]
Put $\xi(g)=\zeta_5$. The negative exponents ensure that the coordinate functions $z_1/z_0,z_2/z_0$ have characters $\xi,\xi^2$ under the action $g\cdot h=h\circ g^{-1}$.

Every nonidentity element has three distinct eigenvalues, so the only points with nontrivial isotropy are
\[
    p_0=[1:0:0],\qquad p_1=[0:1:0],\qquad p_2=[0:0:1].
\]
Each is fixed by all of $G$. Thus $Y=X/G$ has three isolated quotient singularities and no divisorial branch locus. We choose local coordinates as follows:
\[
\renewcommand{\arraystretch}{1.25}
\begin{array}{c|cc|cc}
 &x&y&\text{character of }x&\text{character of }y\\ \hline
 p_0&z_1/z_0&z_2/z_0&\xi&\xi^2\\
 p_1&z_2/z_1&z_0/z_1&\xi&\xi^{-1}\\
 p_2&z_1/z_2&z_0/z_2&\xi^{-1}&\xi^{-2}
\end{array}
\]

\paragraph{The three local corrections.}
At $p_0$, equation~\eqref{eq:applications-local-5-2} gives
\[
    \rho_{p_0}=\frac15\xi+\frac25\xi^3+\frac25\xi^4.
\]
At $p_1$, the quotient has type $A_4$. Hence
\[
    \rho_{p_1}
    =\sum_{j=0}^4\frac{j(5-j)}{10}\xi^j
    =\frac25\xi+\frac35\xi^2+\frac35\xi^3+\frac25\xi^4.
\]
At $p_2$, the same $\frac15(1,2)$ calculation applies with $\xi$ replaced by $\xi^{-1}$. Equivalently, use the generator $g^{-1}$. Expressing the result in the original characters of $G$ gives
\[
\begin{aligned}
    \rho_{p_2}
    &=\frac15\xi^{-1}+\frac25\xi^{-3}+\frac25\xi^{-4}\\
    &=\frac25\xi+\frac25\xi^2+\frac15\xi^4.
\end{aligned}
\]
There is no induction to perform because each isotropy group is $G$. Adding the three classes, we obtain
\begin{equation}
\label{eq:applications-global-correction}
    \rho_{p_0}+\rho_{p_1}+\rho_{p_2}
      =\xi+\xi^2+\xi^3+\xi^4
      =\chi_{\mathrm{reg}}-\mathbf1.
\end{equation}
Since each $\rho_{p_i}$ equals $\rho_{p_i}^{L}$, evaluating \eqref{eq:applications-global-correction} at $g^k\neq1$ gives the Lefschetz identity $\sum_i\det(1-g^k|T_{X,p_i}^*)^{-1}=1$.

\paragraph{The structure sheaf.}
Let $S\to Y$ be the minimal resolution. The quotient is rational: on the chart at $p_0$,
\[
    \mathbb C(Y)=\mathbb C(x,y)^G
      =\mathbb C\left(x^5,\frac{y}{x^2}\right).
\]
Indeed, adjoining $x$ recovers $\mathbb C(x,y)$ and gives a degree-five extension. Thus $S$ is a smooth rational projective surface and $\chi(\mathcal O_S)=1$. Since quotient singularities are rational, $\chi(\mathcal O_Y)=1$ as well.

The natural isotypic extensions on $S$ are logarithmic by Proposition~\ref{prop:general-hj-natural-residues}. There are no branch-curve terms, so the global residue formula gives
\[
\begin{aligned}
    \chi_G(\mathbb P^2,\mathcal O_{\mathbb P^2})
    &=\chi(\mathcal O_Y)\chi_{\mathrm{reg}}
       -\rho_{p_0}-\rho_{p_1}-\rho_{p_2}\\
    &=\mathbf1.
\end{aligned}
\]
This agrees with the direct calculation: $H^0(\mathbb P^2,\mathcal O)=\mathbb C$ with the trivial action, and the higher cohomology vanishes. The example also shows how fractional local coefficients combine to give an integral global character.

\paragraph{Divisor peeling for $\mathcal O(d)$.}
Let $D=\{z_0=0\}$, and give $\mathcal O_X(dD)\cong\mathcal O_{\mathbb P^2}(d)$ its natural divisor linearization. The linear action above induces characters $\mathbf1,\xi,\xi^2$ on the sections $z_0,z_1,z_2$ of $\mathcal O(1)$. Since $z_0$ is invariant and has divisor $D$, this agrees with the chosen divisor linearization.

For $d\geq0$, the peeling formula gives
\[
    \chi_G(X,\mathcal O_X(dD))
      =\mathbf1+\sum_{k=1}^d
        \chi_G(D,\mathcal O_D(kD)).
\]
Here $D\cong\mathbb P^1$ and $\mathcal O_D(kD)\cong\mathcal O_{\mathbb P^1}(k)$. Its higher cohomology vanishes, and its sections have basis
\[
    z_1^b z_2^{k-b},\qquad 0\leq b\leq k,
\]
with characters $\xi^{b+2(k-b)}$. Consequently,
\begin{equation}
\label{eq:applications-projective-line-bundles}
    \chi_G(\mathbb P^2,\mathcal O(d))
      =\mathbf1+\sum_{k=1}^d\sum_{b=0}^k\xi^{b+2(k-b)}
      =\sum_{\substack{b,c\geq0\\b+c\leq d}}\xi^{b+2c}.
\end{equation}
For instance,
\[
    \chi_G(\mathbb P^2,\mathcal O(1))=\mathbf1+\xi+\xi^2.
\]
More generally, \eqref{eq:applications-projective-line-bundles} is exactly the character of the homogeneous monomial basis $z_0^{d-b-c}z_1^b z_2^c$. Evaluating at the identity gives $\binom{d+2}{2}$, the ordinary Euler characteristic. Thus the local residue calculation supplies the structure-sheaf term, and divisor peeling supplies the remaining line-bundle contributions.

\subsection{A non-abelian example with fixed curves}
\label{subsec:applications-S3}

The previous examples have only isolated fixed points. We now give an example in which the pre-quotient blow-up is required and fixed curves pass through the blown-up point, so that the branch-curve terms of Section~\ref{sec:general-hj} also enter.

Let $G=S_3$ act on $X=\mathbb P^2$ by permuting the homogeneous coordinates, and write $\mathbf1,\operatorname{sgn},\operatorname{st}$ for its irreducible characters, of degrees $1,1,2$. The elementary symmetric polynomials identify $Y=\mathbb P^2/S_3$ with the weighted projective plane $\mathbb P(1,2,3)$, so $\chi(\mathcal O_Y)=1$. Since $\chi_G(\mathbb P^2,\mathcal O)=\mathbf1$, the procedure must give
\begin{equation}
\label{eq:applications-S3-target}
    \sum_\xi m_\xi(\mathcal O_X)\,\xi
    =\chi_{\mathrm{reg}}-\mathbf1
    =\operatorname{sgn}+2\operatorname{st}.
\end{equation}

\paragraph{Points with nontrivial isotropy.}
Put $\omega=\zeta_3$. A transposition $(ij)$ fixes the line $\ell_{ij}=\{z_i=z_j\}$ pointwise, together with one further point. For instance, $(01)$ fixes $[1:-1:0]$. A $3$-cycle fixes $[1:1:1]$, $[1:\omega:\omega^2]$, and $[1:\omega^2:\omega]$. Thus the points with nontrivial isotropy are:
\begin{itemize}
    \item the point $p=[1:1:1]$, with $G_p=S_3$ acting on $T_{X,p}$ by its reflection representation
    \item the orbit of $[1:-1:0]$, of size three, with isotropy $\langle(01)\rangle$ acting by $-1$, so of type $A_1$
    \item the orbit of $[1:\omega:\omega^2]$, of size two, with isotropy $A_3$, of type $A_2$
    \item the points of the three lines $\ell_{ij}$, with generic isotropy of order two.
\end{itemize}
The three lines form one $G$-orbit and meet pairwise only at $p$. The image of $p$ in $Y$ is a smooth point by the Chevalley--Shephard--Todd theorem, but the branch curve has a cusp there: it is the discriminant curve of the reflection representation. Since $G_p$ is non-cyclic and the fixed curves in one orbit meet at $p$, we blow up $p$.

\paragraph{The exceptional part at $p$.}
Let $f\colon X'\to X$ be the blow-up at $p$. The strict transforms $\ell'_{ij}$ are pairwise disjoint and satisfy $(\ell'_{ij})^2=0$. The reflection representation of $S_3$ contains no nontrivial scalars, so $H_E$ is trivial. Hence $E$ is not a fixed curve, the central term $\sigma_p$ vanishes, and the strict transform $T$ of the image of $E$ is not a component of $\Delta$.

The group acts on $E=\mathbb P(T_{X,p})$ as the dihedral group of order six. Its three special orbits consist of the two eigenlines of a $3$-cycle $c$, the three mirror lines of the transpositions, and the three $(-1)$-eigenlines of the transpositions. We use the coordinates $(u,v)$ of Lemma~\ref{lem:diag-formal}. Let $t$ be a transposition, $\varepsilon_t$ the nontrivial character of $\langle t\rangle$, and $\psi$ the character of $\langle c\rangle$ with $\psi(c)=\omega$.
\begin{itemize}
    \item At a mirror line, $\lambda(t)=1$ and $\mu(t)=-1$, so $t\cdot(u,v)=(u,-v)$. This is a reflection whose fixed curve $\{v=0\}$ is the strict transform of $\ell_{ij}$. Thus $r_x=1$, $r_y=2$, and $n=1$. The quotient point is smooth, and its contribution $-t_{x,\chi}t_{y,\chi}$ vanishes because $t_{x,\chi}=0$.
    \item At a $(-1)$-eigenline, $\lambda(t)=-1$ and $\mu(t)=1$, so $t\cdot(u,v)=(-u,-v)$. The quotient has type $A_1$, and Proposition~\ref{prop:An-local-correction} gives $\rho_q^{\mathrm{loc}}=\frac14\varepsilon_t$.
    \item At the eigenline on which $c$ has tangent eigenvalue $\omega$, we have $\lambda(c)=\omega^2$ and $\mu(c)=\omega$, so $c\cdot(u,v)=(\omega^2u,\omega^2v)$. The quotient has type $\frac13(1,1)$, with generator $c^2$, $n=3$, and $a=a^\vee=1$. The chain is one curve with $J=(-3)$ and $\boldsymbol\kappa=(1)$, and $v_1=(\frac13,\frac13)$. For $\chi(c^2)=\omega^e$, the residue is $e/3$, and Proposition~\ref{prop:general-hj-residue-formula} gives $m_{q,\chi}=e(e-1)/6$. Thus $\rho_q^{\mathrm{loc}}=\frac13\psi$.
\end{itemize}
The characters restrict as follows: $\operatorname{sgn}$ restricts to $\varepsilon_t$ and to $\mathbf1$, and $\operatorname{st}$ restricts to $\mathbf1+\varepsilon_t$ and to $\psi+\psi^2$. Formula \eqref{eq:general-hj-global-assembly} and Frobenius reciprocity therefore give the exceptional part
\[
    \langle\rho_p^{\mathrm{exc}},\mathbf1\rangle=0,
    \qquad
    \langle\rho_p^{\mathrm{exc}},\operatorname{sgn}\rangle=\frac14,
    \qquad
    \langle\rho_p^{\mathrm{exc}},\operatorname{st}\rangle=\frac14+\frac13=\frac7{12}.
\]
Although $p$ has smooth image in $Y$, its exceptional part is nonzero.

\paragraph{The remaining terms.}
By Proposition~\ref{prop:An-local-correction}, the $A_1$ orbit contributes $\frac14\varepsilon_t$ on $\langle t\rangle$, hence $\frac14$ to both $\operatorname{sgn}$ and $\operatorname{st}$. The $A_2$ orbit contributes $\frac13(\psi+\psi^2)$ on $A_3$, hence $0$ to $\operatorname{sgn}$ and $\frac23$ to $\operatorname{st}$.

Let $B$ be the image of the three lines $\ell'_{ij}$ in $Y'=X'/G$. Since $\pi'^*B=2\sum\ell'_{ij}$ and the $\ell'_{ij}$ are disjoint curves of self-intersection zero, the projection formula gives $6B^2=0$. The strict transforms $\ell'_{ij}$ meet $E$ only at the mirror orbit, where the quotient is smooth, and every other point of $\ell'_{ij}$ has isotropy exactly the reflection group $\langle t\rangle$. Hence $B$ lies in the smooth locus of $Y'$. Its strict transform on $S$ therefore satisfies
\[
    B^2=0,
    \qquad
    K_S\cdot B=-2.
\]
Along $B$, the residue is $\frac12$ on the $\varepsilon_t$-part of the multiplicity bundle and $0$ on the rest. The $\varepsilon_t$-part of $\mathcal V_\xi$ has rank $\langle\operatorname{Res}_{\langle t\rangle}^G\xi,\varepsilon_t\rangle$, which is $0,1,1$ for $\mathbf1,\operatorname{sgn},\operatorname{st}$. Hence the branch term $-\frac12\bigl(\operatorname{tr}(R^2)B^2+\operatorname{tr}(R)K_S\cdot B\bigr)$ contributes $\frac12$ to both $\operatorname{sgn}$ and $\operatorname{st}$. The curve $B$ meets no other component of $\Delta$. It meets $T$, which is not in $\Delta$.

Adding the four contributions gives
\[
\begin{aligned}
    m_{\operatorname{sgn}}(\mathcal O_X)
    &=\frac14+\frac14+0+\frac12=1,\\
    m_{\operatorname{st}}(\mathcal O_X)
    &=\frac7{12}+\frac14+\frac23+\frac12=2,
\end{aligned}
\]
and $m_{\mathbf1}(\mathcal O_X)=0$, in agreement with \eqref{eq:applications-S3-target}. Unlike the $\mathbb Z_5$ example, a global branch-curve term appears here. It uses $B^2$ and $K_S\cdot B$ on $S$ and contributes $\frac12$ to each coefficient, that is, half of $m_{\operatorname{sgn}}$ and a quarter of $m_{\operatorname{st}}$. This is the allocation of Section~\ref{sec:general-hj}: the exceptional part is computed locally, while the self-intersection and canonical terms of the branch curves remain in the global formula.